\documentclass[11pt]{amsart}
\usepackage{url, calc}
\usepackage{amsmath,amsxtra,amssymb,latexsym,epsfig,amscd,amsthm,fancybox,epsfig}
\usepackage[mathscr]{eucal}
\usepackage{graphicx}
\usepackage{multicol,xcolor}
\usepackage{epsfig} 
\usepackage{epstopdf}
\usepackage{cases}
\usepackage{subfig}
\usepackage{color}
\usepackage{hyperref}
\usepackage{bigints}
\setlength{\oddsidemargin}{-0.08in}
\setlength{\evensidemargin}{-0.08in}
\setlength{\textheight}{9.0in}
\setlength{\textwidth}{6.5in}
\setlength{\topmargin}{-0.5in}

%


\newtheorem{thm}{Theorem}[section]
\newtheorem {asp}{Assumption}[section]
\newtheorem{lm}{Lemma}[section]
\newtheorem{rmk}{Remark}[section]

\newtheorem{deff}{Definition}[section]

\newtheorem{prop}{Proposition}[section]
\theoremstyle{definition}

\theoremstyle{remark}

\newtheorem{example}{Example}[section]
\numberwithin{equation}{section}


\DeclareMathOperator{\suppo}{supp}
\DeclareMathOperator{\Conv}{Conv}
\newcommand{\eps}{\varepsilon}

\newcommand{\M}{\mathcal{M}}
\newcommand{\F}{\mathcal{F}}

\newcommand{\E}{\mathbb{E}}
\newcommand{\BE}{\mathbf{E}}
\newcommand{\BB}{\mathbf{B}}

\newcommand{\BX}{\mathbf{X}}
\newcommand{\bx}{\mathbf{x}}

\newcommand{\by}{\mathbf{y}}

\newcommand{\bc}{\mathbf{c}}

\newcommand{\bp}{\mathbf{p}}
\newcommand{\bq}{\mathbf{q}}

\newcommand{\N}{\mathbb{N}}

\newcommand{\PP}{\mathbb{P}}

\newcommand{\K}{\mathcal{K}}

\newcommand{\CH}{{\mathcal{H}_\mu}}
\newcommand{\R}{\mathbb{R}}

\newcommand{\Lom}{\mathcal{L}}
\newcommand{\U}{\mathcal{U}}

\newcommand{\wtd}{\widetilde}
\numberwithin{equation}{section}

\newcommand{\I}{\mathcal{I}}


\newcommand{\bed}{\begin{displaymath}}
\newcommand{\eed}{\end{displaymath}}
\newcommand{\bea}{\bed\begin{array}{rl}}
\newcommand{\eea}{\end{array}\eed}

\newcommand{\barray}{\begin{array}{ll}}
\newcommand{\earray}{\end{array}}
\newcommand{\diag}{{\rm diag}}

\newcommand{\1}{\boldsymbol{1}}
\newcommand{\0}{\boldsymbol{0}}
\newcommand{\bdelta}{\boldsymbol{\delta}}

\newcommand{\dist}{\mathrm{dist}}

\def\hat{\widehat}
\def\a.s{\text{\;a.s.\;}}

\title[Stochastic Kolmogorov systems]{Coexistence and extinction for stochastic Kolmogorov systems}
\author[A. Hening]{Alexandru Hening }
\address{Department of Mathematics \\
 Imperial College London\\
 South Kensington Campus\\
  London, SW7 2AZ\\
 United Kingdom}
 \email{a.hening@imperial.ac.uk}

\author[D. Nguyen]{Dang H. Nguyen }
\thanks{D. Nguyen was in part supported by
 the National Science Foundation
under grant DMS-1207667.}
\address{Department of Mathematics \\
 Wayne State University\\
 Detroit, MI 48202 \\
 United States}
 \email{dangnh.maths@gmail.com}

\keywords{Kolmogorov system; ergodicity; Lotka-Volterra; Lyapunov exponent; stochastic environment; predator-prey; population dynamics}
\subjclass[2010]{92D25, 37H15, 60H10, 60J05, 60J99}

\begin{document}
\begin{abstract}
In recent years there has been a growing interest in the study of the dynamics of stochastic populations. A key question in population biology is to understand the conditions under which populations coexist or go extinct. Theoretical and empirical studies have shown that coexistence can be facilitated or negated by both biotic interactions and environmental fluctuations. We study the dynamics of $n$ populations that live in a stochastic environment and which can interact nonlinearly (through competition for resources, predator-prey behavior, etc.). Our models are described by $n$-dimensional Kolmogorov systems with white noise (stochastic differential equations - SDE). We give sharp conditions under which the populations converge exponentially fast to their unique stationary distribution as well as conditions under which some populations go extinct exponentially fast.

The analysis is done by a careful study of the properties of the invariant measures of the process that are supported on the boundary of the domain. To our knowledge this is one of the first general results describing the asymptotic behavior of stochastic Kolmogorov systems in non-compact domains.

We are able to fully describe the properties of many of the SDE that appear in the literature.
In particular, we extend results on two dimensional Lotka-Volterra models, two dimensional predator-prey models, $n$ dimensional simple food chains, and two predator and one prey models. We also show how one can use our methods to classify the dynamics of any two-dimensional stochastic Kolmogorov system satisfying some mild assumptions.
\end{abstract}
\maketitle
\tableofcontents
\section{Introduction}

Real populations do not evolve in isolation and as a result much of ecology is concerned with
understanding the characteristics that allow two species to coexist, or one species to take over the
habitat of another. It is of fundamental importance to understand what will happen to an invading
species. Will it invade successfully or die out in the attempt? If it does invade, will it coexist
with the native population? Mathematical models for invasibility have contributed significantly to
the understanding of the epidemiology of infectious disease outbreaks (\cite{cross05}) and ecological
processes (\cite{law96}; \cite{cas01}). There is widespread empirical evidence
that heterogeneity, arising from abiotic (precipitation, temperature, sunlight) or biotic (competition,
predation) factors, is important in determining invasibility (\cite{davies05}; \cite{pyvsek05}). The fluctuations of the environment make the dynamics of populations inherently stochastic.

The combined effects of biotic interactions and environmental fluctuations are key when trying to determine species richness. Sometimes biotic effects can result in species going extinct. However, if one adds the effects of a random environment extinction might be reversed into coexistence. In other instances deterministic systems that coexist become extinct once one takes into account environmental fluctuations. A  successful way of studying this interplay is by modelling the populations as discrete or continuous time Markov processes and looking at the long-term behavior of these processes (\cite{C00, ERSS13, EHS15,  LES03, SLS09, SBA11, BEM07, BS09, BHS08, CM10, CCA09}).

A natural way of analyzing the coexistence of species is by analyzing the average per-capita growth rate of a population when rare. Intuitively, if this growth rate is positive the respective population increases when rare, and can invade, while if it is negative the population decreases and goes extinct. If there are only two populations then coexistence is ensured if each population can invade when it is rare and the other populaton is stationary (\cite{T77, CE89, EHS15}).

There is a general theory for coexistence for deterministic models (\cite{H81, H84, HJ89}). It is shown that a sufficient condition for persistence is the existence of a fixed set of weights associated with the interacting populations such that this weighted combination of the populations's invasion rates is positive for any invariant measure supported by the boundary (i.e. associated to a sub-collection of populations) - see \cite{H81}.

A few recent studies have explored the effect of environmental stochasticity on continuous-time models. In \cite{BHS08} the authors found that if a deterministic continuous-time model satisfies the above persistence criterion then under some weak assumptions the corresponding stochastic differential equation with a small diffusion term has a positive stationary distribution concentrated on the positive global attractor of the deterministic system. For general stochastic difference and differential equations with arbitrary levels of noise on a \textit{compact} state space sufficient conditions for persistence are given in \cite{SBA11}.

The aim of this paper is two-fold.
First, we want to have a general theory that gives sharp sufficient conditions for both persistence \textit{and} extinction for stochastic Kolmogorov systems.
Second, we want our methods to work on non-compact state spaces (for example $\R_+^n$).

The criteria we present for persistence are the same as those in \cite{SBA11}. However, we extend their result to non-compact state spaces and we prove that the convergence rate is exponential. We note that some of our persistence results have been announced in the $2014$ Bernoulli lecture of Michel Bena\"{\i}m. Furthermore, criteria for persistence for general Markov processes appear in \cite{B14} and we use some of those ideas in our proofs.
We come up with natural assumptions under which one or more populations go extinct with nonzero probability. There do not seem to be general criteria for extinction in the literature. Results have been obtained for a Lotka-Volterra competitive system in the two-dimensional setting for SDE (\cite{EHS15}) and piecewise-deterministic Markov processes (\cite{BL16}). However, in these cases there are only two or three ergodic invariant probability measures on the boundary and as such the proofs simplify significantly.

It should be noted that most of the related results in the literature
are obtained by choosing a function and imposing conditions
such that the function has some Lyapunov-type properties.
The choice of Lyapunov function is usually artificial and imposes unnecessary constraints on the system.
The results one gets are therefore limited as the particular Lyapunov function does not reflect the true nature of the dynamical system.
Our approach is to carefully analyze the dynamics of the process
near the boundary of its domain. Because of this, we are able to fully characterize and classify the asymptotic behavior of the system.

As corollaries of our main theorems,
we extend results on two dimensional Lotka-Volterra models (see \cite{EHS15, BL16}), two dimensional predator-prey models (see \cite{RP07, R03, CK05}), two predator and one prey models (see \cite{LB16}) and populations modeled by SDE in a compact state space (see \cite{SBA11}).

The paper is organized as follows. In section \ref{s:results} we define our framework, the problems we study, our different assumptions and the main results.  In Section \ref{s:examp} we exhibit a few examples that fall into our general setting (Lotka-Volterra competition and predator-prey models). We also give an example of a cooperative Lotka-Volterra model that does not satisfy our assumptions. However, in this case either the solution blows up in finite time or there is no invariant probability measure supported by the interior of the domain.
In Section \ref{s:prep} we analyze some of the properties of the SDE that models our populations. In particular we show it has a well-defined strong solution $\BX$ for all $t>0$ and that this solution is pathwise unique. Section \ref{s:perm} is devoted to the study of conditions under which $\BX$ converges to its unique invariant probability measure on $\R_+^{n,\circ}:=(0,\infty)^n$. In Theorem \ref{thm3.1} we show that, under some natural assumptions, $\BX$ is strongly stochastically persistent and that the convergence in total variation of its transition probability to a unique stationary distribution on $\R_+^{n,\circ}$ is exponentially fast. In Section \ref{s:extin} we look at when one or more of the populations go extinct with a positive probability. First, we show in Theorem \ref{thm4.1} that if there exists an invariant probability measure living on the boundary that is a \textit{sink}, then the process converges to the boundary in a weak sense. Under a few extra assumptions we show in Theorem \ref{thm4.2} that for every sink invariant measure $\mu$ on the boundary the process converges with strictly positive probability to the support of $\mu$.
Finally, we present in the Appendix the proofs of some auxiliary lemmas from Section \ref{s:prep} and Section \ref{s:extin}.
\subsection{Notation and Results} \label{s:results}

We work on a complete probability space $(\Omega,\F,\{\F_t\}_{t\geq0},\PP)$ with a filtration $\{\F_t\}_{t\geq 0}$ satisfying the usual conditions.
Consider a stochastic Kolmogorov system

\begin{equation}\label{e:system}
dX_i(t)=X_i(t) f_i(\BX(t))\,dt+X_i(t)g_i(\BX(t))\,dE_i(t), ~i=1,\dots,n
\end{equation}
taking values in $[0,\infty)^n$. We assume
$\BE(t)=(E_1(t),\dots, E_n(t))^T=\Gamma^\top\BB(t)$ where
$\Gamma$ is a $n\times n$ matrix such that
$\Gamma^\top\Gamma=\Sigma=(\sigma_{ij})_{n\times n}$
and $\BB(t)=(B_1(t),\dots, B_n(t))$ is a vector of independent standard Brownian motions adapted to the filtration $(\F_t)_{t\geq 0}$. The SDE \eqref{e:system} is describing the dynamics of $n$ interacting populations $\BX(t)=(X_1(t),\dots,X_n(t))_{t\geq 0}$. Throughout the paper we set $\R_+^n:=[0,\infty)^n$ and $\R_+^{n,\circ}:=(0,\infty)^n$.

\begin{rmk}
One might wonder if one could treat the more general model

$$dX_i(t)=X_i(t)f_i(\mathbf X(t))\,dt+X_i(t)g_i(\mathbf X(t))\sum_{j=1}^n \Sigma_{ij}(\mathbf X(t))\,dB_j(t), ~i=1,\dots,n$$

In our model \eqref{e:system}, we work with a constant correlation matrix $\Sigma=(\sigma_{ij})$
but it can be seen that the proofs
do not depend on whether $\Sigma$ is constant or a function of $\mathbf x$.
Thus, our results still hold if $\Sigma$ depends on $\mathbf x$ as long as it is bounded and locally Lipschitz.
Actually we can always assume it is bounded  because we can normalize $\Sigma$
and absorb the necessary factors into $g_i(\mathbf x)$.

\end{rmk}

The drift term of our system is due to the deterministic dynamics while the diffusion term is due to the effects of random fluctuations of the environment. The drift for population $i$ is given by $X_i(t) f_i(\BX(t))$ where $f_i$ is its per-capita growth rate.  From now on the process given by the solution to \eqref{e:system} will be denoted by $\BX$ or $(\BX(t))_{t\geq 0}$.

Let $\Lom$ be the infinitesimal generator of the process $\BX$. For smooth enough functions $F:\R_+^n\to \R$ the generator $\Lom$ acts as
\[
\Lom F(\bx) = \sum_i x_if_i(\bx)\frac{\partial F}{\partial x_i}(\bx) + \frac{1}{2}\sum_{i,j}\sigma_{ij}x_ix_jg_i(\bx)g_j(\bx)\frac{\partial^2 F}{\partial x_i \partial x_j}(\bx).
\]

We use the norm $\|\bx\|=\sum_{i=1}^n |x_i|$ in $\R^n$. For $a,b\in\R$, let $a\wedge b:=\min\{a,b\}$ and $a\vee b:=\max\{a,b\}$. Similarly we let $\bigwedge_{i=1}^n u_i:=\min_{i}u_i$ and  $\bigvee_{i=1}^n u_i:=\max_{i}u_i$.

We remark that \eqref{e:system} can be seen as a generalization to non-compact state spaces of the model studied in \cite{SBA11}. The following is a standing assumption throughout the paper.
\begin{asp}\label{a.nonde}  The coefficients of \eqref{e:system} satisfy the following conditions:
\begin{itemize}
\item[(1)] $\diag(g_1(\bx),\dots,g_n(\bx))\Gamma^\top\Gamma\diag(g_1(\bx),\dots,g_n(\bx))=(g_i(\bx)g_j(\bx)\sigma_{ij})_{n\times n}$
is a positive definite matrix for any $\bx\in\R^{n}_+$.
\item[(2)] $f_i(\cdot), g_i(\cdot):\R^n_+\to\R$ are locally Lipschitz functions for any $i=1,\dots,n.$
\item[(3)] There exist $\bc=(c_1,\dots,c_n)\in\R^{n,\circ}_+$ and $\gamma_b>0$ such that
\begin{equation}\label{a.tight}
\limsup\limits_{\|x\|\to\infty}\left[\dfrac{\sum_i c_ix_if_i(\bx)}{1+\bc^\top\bx}-\dfrac12\dfrac{\sum_{i,j} \sigma_{ij}c_ic_jx_ix_jg_i(\bx)g_j(\bx)}{(1+\bc^\top\bx)^2}+\gamma_b\left(1+\sum_{i} (|f_i(\bx)|+g_i^2(\bx))\right)\right]<0.
\end{equation}
\end{itemize}
\end{asp}
\begin{rmk}
Parts (2) and (3) of Assumption \ref{a.nonde} guarantee the existence and uniqueness of strong solutions to \eqref{e:system}.
We need part (1) of Assumption \ref{a.nonde} to ensure that the solution to \eqref{e:system} is a non degenerate diffusion.
Moreover, we show later that (3) implies the tightness of the family of transition probabilities
of the solution to \eqref{e:system}.
\end{rmk}

\begin{rmk}
There are a few different ways to add stochastic noise to deterministic population dynamics. We assume that the environment mainly affects the growth/death rates of the populations. See \cite{T77, B02, G88, ANY, EHS15, ERSS13, SBA11} for more details.
\end{rmk}

We next define what we mean by persistence and extinction in our setting.
\begin{deff}
The process $\BX$ is strongly stochastically persistent if it has a unique invariant probability measure $\pi^*$ on $\R^{n,\circ}_+$ and
\begin{equation}
\lim\limits_{t\to\infty} \|P_\BX(t, \mathbf{x}, \cdot)-\pi^*(\cdot)\|_{\text{TV}}=0, \;\mathbf{x}\in\R^{n,\circ}_+
\end{equation}
where $\|\cdot,\cdot\|_{\text{TV}}$ is the total variation norm and $P_\BX(t,\mathbf{x},\cdot)$ is the transition probability of $(\BX(t))_{t\geq 0}$.
\end{deff}

\begin{deff}
If $\BX(0)=\bx\in \R^{n,\circ}_+ $ we say the population $X_i$ goes extinct with probability $p_\bx>0$ if
\[
\PP_\bx\left\{\lim_{t\to\infty}X_i(t)=0\right\}=p_\bx.
\]
We say the population $X_i$ goes extinct if for all $\bx\in\R^{n,\circ}_+$
\[
\PP_\bx\left\{\lim_{t\to\infty}X_i(t)=0\right\}=1.
\]
\end{deff}

\begin{example}
Most of the common ecological models satisfy condition \eqref{a.tight}.
\begin{itemize}

\item Consider the linear one-dimensional model
$$dX(t)=aX(t)dt+\sigma X(t)dB(t).$$
If $a-\frac{\sigma^2}2<0$ then
\eqref{a.tight} is satisfied for any $c>0$.

\item Consider the logistic model
$$dX(t)=X(t)[a-bX(t)]dt+\sigma X(t)dB(t), b>0.$$ Then equation
\eqref{a.tight} is satisfied for any $c>0$.

\item Consider the competitive Lotka-Volterra model
$$dX_i(t)=X_i(t)\left[a_i-\sum_{j}b_{ji}X_j(t)\right]dt+X_i(t)g_i(\BX(t))dE_i(t),$$
with $b_{ji}> 0, j,i=1,\dots,n$.

If $\sum_{i=1}^n g_i^2(\bx)<K(1+\|\bx\|+\bigwedge_{i=1}^n g_i^2(\bx))$ then
 \eqref{a.tight} is satisfied with $\bc=(1,\dots,1)$. We give a short argument for why this is true.
 Since $b_{ij}>0$, there is $\tilde b>0$ such that
  \begin{equation}\label{e1-cLV}\dfrac{\sum_i x_i(a_i-\sum_j b_{ji}x_j)}{1+\sum_i x_i}<-\tilde b (1+\sum_i x_i)
   \end{equation}
 if $\|\bx\|$ is sufficiently large. By the Cauchy-Schwarz inequality, there are $\tilde\sigma_1,\tilde\sigma_2>0$ such that
 \begin{equation}\label{e2-cLV}
 -\dfrac12\dfrac{\sum_{i,j} \sigma_{ij}x_ix_jg_i(\bx)g_j(\bx)}{(1+\sum_i x_i)^2}\leq -\tilde\sigma_1\dfrac{\sum_{i} x_i^2g_i^2(\bx)}{(1+\sum_i x_i)^2}\leq  -\tilde\sigma_2\left(\bigwedge_{i=1}^n g^2_{i}(\bx)\right),
 \end{equation}
 when $\|\bx\|$ is sufficiently large.
 In light of \eqref{e1-cLV} and \eqref{e2-cLV}, if $\sum_{i=1}^n g_i^2(\bx)<K(1+\|\bx\|+\bigwedge_{i=1}^n g_i^2(\bx))$, we can find $\gamma_b>0$ such that
\begin{align*}
\dfrac{\sum_i x_i(a_i-\sum_j b_{ji}x_j)(\bx)}{1+\sum_i x_i}&-\dfrac12\dfrac{\sum_{i,j} \sigma_{ij}x_ix_jg_i(\bx)g_j(\bx)}{(1+\sum_i x_i)^2}\\
&+\gamma_b\left(1+\sum_{i} (|a_i-\sum_j b_{ji}x_j|+g_i^2(\bx))\right)<0
\end{align*}
for sufficiently large $\|\bx\|$.
As a result \eqref{a.tight} holds.
\item Consider predator-prey Lotka-Volterra model
$$
\begin{cases}
dX(t)=X(t)[a_1-b_1X(t)-c_1Y(t)]dt+X(t)g_1(X(t), Y(t))dE_1(t)\\
dY(t)=Y(t)[-a_2-b_2Y(t)+c_2X(t)]dt+Y(t)g_2(X(t), Y(t))dE_2(t),\\
\end{cases}
$$
with $b_1,b_2>0, c_1,c_2\geq 0, a_2\geq 0$.
If $\sum_{i=1}^2 g_i^2(x,y)<K(1+x+y+g_1^2(x,y)\wedge g_2^2(x,y))$,
 one can use arguments similar to those from the competitive Lotka-Volterra model to show that
 \eqref{a.tight} is satisfied with $\bc=(c_2,c_1)$.
\end{itemize}
\end{example}

Let $\M$ be the set of ergodic invariant probability measures of $\BX$ supported on the boundary $\partial\R^n_+:=\R_+^n\setminus \R_+^{n,\circ}$. Note that if we let $\bdelta^*$ be the Dirac measure concentrated at $\0$ then $\bdelta^*\in\M$ so that $\M\neq\emptyset$. For a subset $\wtd\M\subset \M$, denote by $\Conv(\wtd\M)$ the convex hull of $\wtd\M$,
that is the set of probability measures $\pi$ of the form
$\pi(\cdot)=\sum_{\mu\in\wtd\M}p_\mu\mu(\cdot)$
with $p_\mu>0,\sum_{\mu\in\wtd\M}p_\mu=1$.

Consider $\mu\in\M$.
Assume $\mu\neq \bdelta^*$. Since the diffusion $\BX$ is non degenerate in each subspace,
there exist $0<n_1<\dots< n_k\leq n$
such that $\suppo(\mu)= \R^\mu_+$ where
$$\R_+^\mu:=\{(x_1,\dots,x_n)\in\R^n_+: x_i=0\text{ if } i\in I_\mu^c\}$$
for
$I_\mu:=\{n_1,\dots, n_k\}$ and
$I_\mu^c:=\{1,\dots,n\}\setminus\{n_1,\dots, n_k\}$. If $\mu=\bdelta^*$ then we note that $\R^{\bdelta^*}_+=\{\0\}$.
Let
$$\R_+^{\mu,\circ}:=\{(x_1,\dots,x_n)\in\R^n_+: x_i=0\text{ if } i\in I_\mu^c\text{ and }x_i>0\text{ if  }x_i\in I_\mu\}$$ and $\partial\R_+^{\mu}:=\R_+^\mu\setminus\R_+^{\mu,\circ}$.

The following condition ensures strong stochastic persistence.
\begin{asp}\label{a.coexn}
For any $\mu\in\Conv(\M)$ one has
$$\max_{\{i=1,\dots,n\}}\left\{\lambda_i(\mu)\right\}>0,$$
where
$$\lambda_i(\mu):=\int_{\partial\R^n_+}\left(f_i(\bx)-\dfrac{\sigma_{ii}g_i^2(\bx)}2\right)\mu(d\bx).$$
(In view of Lemma \ref{lm2.3},
$\lambda_i(\mu)$ is well-defined.)
\end{asp}
\begin{thm}\label{t:pers}
Suppose that Assumptions \ref{a.nonde} and \ref{a.coexn} hold. Then $\BX$ is strongly stochastically persistent and converges exponentially fast to its unique invariant probability measure $\pi^*$ on $\R_+^{n,\circ}$.
\end{thm}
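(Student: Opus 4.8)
The strategy is to construct a Foster--Lyapunov function on $\R^{n,\circ}_+$ that diverges both as $\|\bx\|\to\infty$ and as $\bx\to\partial\R^n_+$, derive a geometric drift condition for a time-$T$ skeleton chain, and conclude via a standard geometric ergodicity theorem. The behaviour for large $\|\bx\|$ is handled by part (3) of Assumption \ref{a.nonde}: a standard computation shows that for every sufficiently small $\theta>0$ the function $V_0(\bx):=(1+\bc^\top\bx)^\theta$ satisfies, outside a compact set, a drift bound $\Lom V_0(\bx)\le K_0-\gamma_0\big(1+\sum_i(|f_i(\bx)|+g_i^2(\bx))\big)V_0(\bx)$ with $K_0,\gamma_0>0$; in particular $\{P_\BX(t,\bx,\cdot)\}_{t\ge0}$ is tight and $\BX$ admits invariant probability measures. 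Integrating this inequality against an arbitrary invariant probability measure $\mu$ of $\BX$ on $\R^n_+$ gives the uniform estimate $\int\big(1+\sum_i(|f_i|+g_i^2)\big)\,d\mu\le K_0/\gamma_0$; in particular $\sup_{\mu\in\M}\int\sum_i g_i^2\,d\mu<\infty$, which makes $\lambda_i$ (well-defined by Lemma \ref{lm2.3}) bounded and weakly continuous on the weakly compact set $\I$ of invariant probability measures supported on $\partial\R^n_+$.

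Next I would use Assumption \ref{a.coexn} to fix transverse weights. Since $\I$ is weakly compact and equal to the closed convex hull of $\M$, and $(\bq,\mu)\mapsto\sum_i q_i\lambda_i(\mu)$ is bilinear and continuous on the product of the probability simplex with $\I$, while Assumption \ref{a.coexn} gives $\min_{\mu\in\I}\max_i\lambda_i(\mu)>0$, the minimax theorem provides a probability vector $\bq$ with $\min_{\mu\in\I}\sum_i q_i\lambda_i(\mu)>0$; a small perturbation of $\bq$ keeps this positive while making all $q_i>0$. Put $p_i:=\alpha q_i$ for $\alpha>0$ small. Using the uniform bound on $\int\sum_i g_i^2\,d\mu$ to dominate the $O(\alpha^2)$ quantity $\tfrac12\sum_{i,j}\sigma_{ij}p_ip_jg_ig_j$ in $L^1(\mu)$, one obtains weights $p_i>0$ and $\eta>0$ such that the continuous function $\phi(\bx):=\sum_i p_i\big(f_i(\bx)-\tfrac12\sigma_{ii}g_i^2(\bx)\big)-\tfrac12\sum_{i,j}\sigma_{ij}p_ip_jg_i(\bx)g_j(\bx)$ satisfies $\int_{\partial\R^n_+}\phi\,d\mu\ge\eta$ for every $\mu\in\I$.

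Set $\psi(\bx):=\prod_i x_i^{-p_i}$, so $\psi\to\infty$ as $\bx\to\partial\R^n_+$, a direct computation gives $\Lom\psi=-\phi\,\psi$, and Itô's formula gives $d\log\psi(\BX(t))=-\phi(\BX(t))\,dt-\sum_i p_i g_i(\BX(t))\,dE_i(t)$. The core of the proof is a finite-time contraction: there exist $T>0$, $\kappa\in(0,1)$ and a compact $\K\subset\R^{n,\circ}_+$ with $\E_\bx[\psi(\BX(T))]\le\kappa\,\psi(\bx)$ for every $\bx$ outside $\K$ but inside a fixed large ball. To prove it, one argues that if $\BX$ starts near $\partial\R^n_+$ and remains in the ball on $[0,T]$ then, with probability close to $1$, its normalized occupation measure over $[0,T]$ is weakly close to $\I$ (tightness prevents escape of mass to infinity), whence $\tfrac1T\int_0^T\phi(\BX(s))\,ds\ge\eta/2$; the martingale $\sum_i p_i\int_0^T g_i(\BX(s))\,dE_i(s)$ has quadratic variation bounded by $C\int_0^T\sum_i g_i^2(\BX(s))\,ds$ and so is small with high probability, giving $\log\psi(\BX(T))\le\log\psi(\bx)-\eta T/4$ on a large-probability event; the complementary event is absorbed with moment estimates for $\psi(\BX(T))$. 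I expect this lemma to be the main obstacle: one must turn the merely \emph{averaged} positivity $\int\phi\,d\mu\ge\eta$ into a bound uniform over a full neighbourhood of the \emph{non-compact} boundary, control excursions towards infinity at the same time (which is precisely why the strong form of \eqref{a.tight} is needed), treat all faces of $\partial\R^n_+$ simultaneously, and make do with a time-averaged rather than a pointwise Lyapunov inequality, since $\phi$ need not be negative near $\partial\R^n_+$.

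Finally, choosing $\alpha$ small enough that the at most exponential growth of $\psi$ along the dynamics is dominated by the decay of $V_0$, one checks that $W:=V_0+L\psi$ for a suitable large $L$ diverges as $\|\bx\|\to\infty$ and as $\bx\to\partial\R^n_+$ and satisfies a geometric drift inequality $\E_\bx[W(\BX(T))]\le\kappa'\,W(\bx)+b\,\1_{\K'}(\bx)$ for the skeleton chain $(\BX(kT))_{k\ge0}$, with $\kappa'<1$ and $\K'\subset\R^{n,\circ}_+$ compact. By part (1) of Assumption \ref{a.nonde}, $\BX$ is a non-degenerate diffusion whose transition kernels have positive continuous densities on $\R^{n,\circ}_+$, so every compact subset of $\R^{n,\circ}_+$ is petite for the skeleton and the skeleton is irreducible there; the standard geometric ergodicity theorem then yields a unique invariant probability measure $\pi^*$ of $\BX$ on $\R^{n,\circ}_+$ and geometric convergence of $P_\BX(kT,\bx,\cdot)$ to $\pi^*$ in total variation, which upgrades to exponential convergence of $P_\BX(t,\bx,\cdot)$ to $\pi^*$ for every $\bx\in\R^{n,\circ}_+$ — i.e., strong stochastic persistence with an exponential rate.
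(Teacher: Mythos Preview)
Your overall strategy---minimax weights $\bp$, a Lyapunov function blowing up at both infinity and the boundary, a finite-time contraction near $\partial\R^n_+$, and Meyn--Tweedie---matches the paper's. However, two concrete points deserve attention.

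First, the additive Lyapunov function $W=V_0+L\psi$ has a genuine gap in the ``corner'' region where $\|\bx\|$ is large \emph{and} $\bx$ is near $\partial\R^n_+$. There $\Lom\psi=-\phi\psi$ with $\phi(\bx)$ potentially very negative (e.g.\ $\phi\sim-\sum_i p_ib_ix_i$ for Lotka--Volterra drifts), so $\psi$ can grow super-exponentially along the dynamics; the phrase ``at most exponential growth of $\psi$'' is not justified, and the decay of $V_0$ cannot absorb an unbounded growth rate of the \emph{other} summand. The paper avoids this by using the \emph{multiplicative} function $V(\bx)=(1+\bc^\top\bx)\prod_i x_i^{-p_i}$: the drift term $\frac{\sum_i c_ix_if_i}{1+\bc^\top\bx}$ from Assumption~\ref{a.nonde}(3) enters $\Lom V^\theta/V^\theta$ and, provided $\|\bp\|\le\delta_0$, dominates the $-\sum_i p_if_i$ contribution globally, giving both $\Lom V^\theta\le\delta_0HV^\theta$ everywhere and $\Lom V^\theta\le-\theta\gamma_bV^\theta$ for $\|\bx\|\ge M$. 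The stopping-time splitting in Theorem~\ref{thm3.1} then treats large $\|\bx\|$ and the near-boundary regime in sequence, with the same function.

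Second, your ``good event/bad event'' route to the finite-time contraction is the right intuition but incomplete: to absorb the bad event you need $\E_\bx[\psi(\BX(T))^{1+\epsilon}]\le C\psi(\bx)^{1+\epsilon}$, which again fails without the multiplicative correction. The paper's device is cleaner: writing $\ln V(\BX(T))=\ln V(\bx)+G(T)$, the two-sided exponential moments of $G(T)$ (from the global bound above and its mirror for $\hat V=(1+\bc^\top\bx)\prod_i x_i^{p_i}$) let one Taylor-expand the log-Laplace transform $\theta\mapsto\ln\E_\bx e^{\theta G(T)}$ at $\theta=0$; since its derivative there equals $\E_\bx G(T)\le-\tfrac34\rho^*T$ (this is Lemma~\ref{lm3.1} plus Feller continuity), one gets $\E_\bx V^\theta(\BX(T))\le e^{-\theta\rho^*T/2}V^\theta(\bx)$ for small $\theta$ directly, without splitting into events.
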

The proof of this result is presented in detail in Section \ref{s:perm}.
The following remark gives a rough intuitive sketch of the proof.
\begin{rmk}
From a dynamical point of view,
the solution in the interior domain $\R^{n,\circ}_+$
is persistent if every invariant probability measure on the boundary
is a ``repeller''.
In a determistic setting,
an equilibrium is a repeller
if it has a positive Lyapunov exponent.
In a stochastic model,
ergodic invariant measures play a similar role.
To determine the Lyapunov exponents of an ergodic invariant measure,
one can look at the equation for $\ln X_i(t)$. An application of It\^o's Lemma yields that		
$$
\dfrac{\ln X_i(t)}t=\dfrac{\ln X_i(0)}t+\dfrac1t\int_0^t\left[f_i(\BX(s))-\dfrac{g_i^2(\BX(s))\sigma_{ii}}2\right]ds+\dfrac1t\int_0^t g_i(\BX(s))dE_i(s).
$$

If $\BX$ is close to the support of an ergodic invariant measure $\mu$ for a long time,
then
$$\dfrac1t\int_0^t\left[f_i(\BX(s))-\dfrac{g_i^2(\BX(s))\sigma_{ii}}2\right]ds$$
can be approximated by the average with respect to $\mu$
$$\lambda_i(\mu)=\int_{\partial \R^n_+}\left(f_i(\bx)-\dfrac{g_i^2(\bx)\sigma_{ii}}2\right)\mu(d\bx)$$
while the term $$\dfrac{\ln X_i(0)}t+\dfrac1t\int_0^t g_i(\BX(s))dE_i(s)$$ is negligible.
This implies that $\lambda(\mu_i), i=1,\dots, n$ are the Lyapunov exponents of $\mu$
(it can also be seen that $\lambda(\mu_i)$ gives the long-term growth rate of $X_i(t)$
if $\BX$ is close to the support of $\mu$).
As a result,
if $\max_{i=1}^n  \{\lambda(\mu_i)\}>0$,
then the invariant measure $\mu$ is a ``repeller''.
Therefore, Assumption \ref{a.coexn} guarantees the persistence of the population.
Moreover, by evaluating the exponential rate $\dfrac{\ln X_i(T)}T$
for sufficiently large $T$ (so that the ergodicity takes effect),
we can show that the solution goes away from the boundary exponentially fast,
and then obtain a geometric rate of convergence in total variation under Assumptions \ref{a.nonde} and \ref{a.coexn}.
This is achieved by constructing a suitable Lyapunov function
with the help of the Laplace transform and
the approximations that were mentioned above.
Note that since we work on a non-compact space, Assumption \ref{a.coexn} part (3) is needed to show
that the solution enters a compact subset of $\R^n_+$ exponentially fast.
\end{rmk}

The following condition will imply extinction.
\begin{asp}\label{a.extn}
There exists a $\mu\in\M$ such that
\begin{equation}\label{ae3.1}
\max_{i\in I_\mu^c}\{\lambda_i(\mu)\}<0.
\end{equation}
If $\R_+^\mu\ne\{\0\}$, suppose further that
for any $\nu\in\Conv(\M_\mu)$ , we have
\begin{equation}\label{ae3.2}
\max_{i\in I_\mu}\{\lambda_i(\nu)\}>0
\end{equation}
where $\M_\mu:=\{\nu'\in\M:\suppo(\nu')\subset\partial\R_+^\mu\}.$
\end{asp}
Define
\begin{equation}\label{e:M1}
\M^1:=\left\{\mu\in\M : \mu ~~\text{satisfies Assumption} ~~\ref{a.extn}\right\},
\end{equation}
and
\begin{equation}\label{e:M2}
\M^2:=\M\setminus\M^1.
\end{equation}
\begin{thm}\label{t:ex}
Under Assumptions \ref{a.nonde} and  \ref{a.extn} for any $\delta>0$ sufficiently small
and any $\bx\in\R^{n,\circ}_+$ we have
\begin{equation}\label{e.extinction}
\lim_{t\to\infty}\E_\bx \left(\bigwedge_{i=1}^n X_i(t)\right)^{\delta}=0.
\end{equation}
\end{thm}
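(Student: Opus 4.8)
\emph{Reduction.} The plan is to show that the time-$t$ law $P_\BX(t,\bx,\cdot)$ concentrates on $\partial\R_+^n$ as $t\to\infty$, quantitatively enough to kill the $\delta$-th moment of $\bigwedge_iX_i(t)$. First I would use part (3) of Assumption \ref{a.nonde}: the consequences of \eqref{a.tight} proved in Section \ref{s:prep} give that $\{P_\BX(t,\bx,\cdot):t\ge0\}$ is tight and $\sup_{t\ge0}\E_\bx\|\BX(t)\|^{\delta_0}<\infty$ for some small $\delta_0>0$. As $\bigwedge_iX_i(t)\le X_1(t)$, the variables $(\bigwedge_iX_i(t))^\delta$ are uniformly integrable for $0<\delta<\delta_0$, so \eqref{e.extinction} is equivalent to $\PP_\bx\{\bigwedge_iX_i(t)\ge\eps\}\to0$ for every $\eps>0$.

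\emph{The sink $\mu$ as a trap.} Fix $\mu$ as in Assumption \ref{a.extn}. The core estimate is a trap near $\R_+^\mu$: I would produce weights $p_i>0$ ($i\in I_\mu^c$), a tube $U=\{\bx\in\R_+^n:\sum_{i\in I_\mu^c}x_i<\eta\}$ about $\R_+^\mu$, and $C,\gamma>0$ so that, with $\tau$ the exit time of $\BX$ from $U$,
\[
\E_\bx\Big[\prod_{i\in I_\mu^c}X_i(t\wedge\tau)^{p_i}\Big]\le Ce^{-\gamma(t\wedge\tau)}\prod_{i\in I_\mu^c}x_i^{p_i},\qquad\bx\in U,
\]
while $\PP_\bx\{\tau=\infty\}\ge1-\eps$ for $\bx$ in a thinner tube, once $\eta$ is small. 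The exponential bound comes from It\^o's formula applied to $\sum_{i\in I_\mu^c}p_i\ln X_i$: by \eqref{ae3.1}, Lemma \ref{lm2.3}, ergodicity of $\mu$, and a perturbation argument comparing the thin-tube dynamics with the ($\mu$-ergodic) dynamics on the face $\R_+^\mu$, the time-averaged drift of this functional over long intervals is $\le\frac12\sum_{i\in I_\mu^c}p_i\lambda_i(\mu)<0$ on $\{\tau>t\}$, so an exponential tilt yields a supermartingale. To control $\tau$ I would invoke that \eqref{ae3.2} is exactly Assumption \ref{a.coexn} for the Kolmogorov subsystem on the invariant face $\R_+^\mu$ (coordinates in $I_\mu^c$ frozen at $0$); Theorem \ref{t:pers} then gives strong persistence there with unique stationary law $\mu$, and with parts (1) and (3) of Assumption \ref{a.nonde} this keeps the trajectory from exiting $U$ through $\partial\R_+^\mu$ or through infinity. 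Hence, started in $U$, $\BX$ stays there with high probability while $\prod_{i\in I_\mu^c}X_i^{p_i}\to0$, i.e.\ it is pulled to $\suppo(\mu)=\R_+^\mu$, where $\bigwedge_ix_i=0$.

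\emph{No interior stationary mass.} Because the diffusion is non-degenerate (Assumption \ref{a.nonde}(1)) with tight marginals, every neighbourhood of $\suppo(\mu)$ is reached from each $\bx\in\R_+^{n,\circ}$ with positive probability (support theorem); with the trap, $\PP_\bx\{X_i(t)\to0\text{ for some }i\in I_\mu^c\}>0$ for all such $\bx$, which rules out positive Harris recurrence of $\BX$ on $\R_+^{n,\circ}$, hence any invariant probability measure charging $\R_+^{n,\circ}$. By Krylov--Bogolyubov and tightness, every subsequential weak limit of $\frac1T\int_0^TP_\BX(t,\bx,\cdot)\,dt$ is an invariant probability measure on $\partial\R_+^n$, and since $\{\bigwedge_ix_i\ge\eps\}$ is closed and disjoint from $\partial\R_+^n$ this gives $\liminf_{t\to\infty}\PP_\bx\{\bigwedge_iX_i(t)\ge\eps\}=0$ for every $\eps>0$.

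\emph{From $\liminf$ to $\lim$, the main obstacle.} Upgrading this $\liminf$ to a true limit is the delicate part, since $\bigwedge_iX_i(t)$ small only says $\BX(t)$ is near \emph{some} face of $\partial\R_+^n$, and near a face without a sink the trajectory may re-enter $\R_+^{n,\circ}$. I would handle this with the splitting $\M=\M^1\sqcup\M^2$: for $\nu\in\M^2$, Assumption \ref{a.extn} fails, so $\nu$ either has a non-negative transverse invasion rate or sits over a non-persistent subface, and in either case a local Lyapunov function built from the $\lambda_i$'s (as in the proof of Theorem \ref{t:pers}) shows no invariant probability measure charges a neighbourhood of $\suppo(\nu)$; thus the Ces\`aro limits above are supported on the finite union $\bigcup_{\nu\in\M^1}\suppo(\nu)$. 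Patching the trap of the second step over these finitely many sink faces should give, for each $\eps>0$, a time $t_\eps$ and a neighbourhood $G_\eps\subset\{\bigwedge_ix_i<\eps\}$ of that union with $\PP_\bx\{\BX(t_\eps)\in G_\eps\}\ge1-\eps$ and $\PP_\by\{\bigwedge_iX_i(t)<\eps\text{ for all }t\ge0\}\ge1-\eps$ for $\by\in G_\eps$; the Markov property then yields $\limsup_{t\to\infty}\PP_\bx\{\bigwedge_iX_i(t)\ge\eps\}\le\eps$, and letting $\eps\downarrow0$ finishes via the first step. I expect nearly all the work to live here: making the induction over faces rigorous, controlling excursions of $\BX$ between neighbourhoods of the various faces uniformly in $\bx$, and --- $\R_+^n$ being non-compact --- using \eqref{a.tight} throughout to keep those excursions from escaping to infinity.
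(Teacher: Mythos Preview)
Your overall architecture is close in spirit to the paper's, but there is a real gap in the last step, and the paper avoids it by a cleaner route.

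\textbf{Where you diverge.} The paper does not work with $\prod_{i\in I_\mu^c}X_i^{p_i}$ and a separate appeal to Theorem~\ref{t:pers} on the face. Instead it builds a single Lyapunov function
\[
U_\theta(\bx)=\sum_{i\in I_\mu^c}\Big[(1+\bc^\top\bx)\,x_i^{\check p}\prod_{j\in I_\mu}x_j^{-\hat p_j}\Big]^\theta,
\]
where the weights $\hat p_j$ come from \eqref{ae3.2} and the factor $(1+\bc^\top\bx)$ from \eqref{a.tight}. This function already blows up near $\partial\R_+^\mu$ and near infinity, so the contraction $\E_\bx U_\theta(\BX(T))\le e^{-\frac12\theta\rho_eT}U_\theta(\bx)$ (Proposition~\ref{prop4.1}) together with the global bound of Lemma~\ref{lm3.3} makes $\tilde U_\theta:=\varsigma\wedge U_\theta$ a supermartingale along the skeleton chain without any separate control of an exit time~$\tau$. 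From this the paper derives \emph{transience} of $\BX$ in $\R_+^{n,\circ}$ by a direct contradiction with recurrence, and for a non-degenerate diffusion transience gives $P(t,\bx,K)\to0$ for every compact $K\subset\R_+^{n,\circ}$. Hence every weak$^*$-limit of the \emph{actual} laws $P(t,\bx,\cdot)$ (not Ces\`aro averages) lives on $\partial\R_+^n$, and the uniform-integrability step finishes.

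\textbf{The gap in your argument.} Because you pass through Krylov--Bogolyubov on $\frac1T\int_0^TP(t,\bx,\cdot)\,dt$, you only obtain $\liminf_{t\to\infty}\PP_\bx\{\bigwedge_iX_i(t)\ge\eps\}=0$, and you then try to upgrade this to a limit by arguing that Ces\`aro limits must be supported on $\bigcup_{\nu\in\M^1}\suppo(\nu)$ and patching traps over the sink faces. That step does not go through under Assumptions~\ref{a.nonde} and~\ref{a.extn} alone. Your claim that for $\nu\in\M^2$ ``a local Lyapunov function built from the $\lambda_i$'s shows no invariant probability measure charges a neighbourhood of $\suppo(\nu)$'' is unjustified: $\nu\in\M^2$ may have $\max_{i\in I_\nu^c}\lambda_i(\nu)=0$ (the degenerate case of Remark~\ref{r:degenerate}), and even when some $\lambda_i(\nu)>0$ this does not prevent the empirical or time-$t$ laws from putting mass near $\suppo(\nu)$. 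Localising the limit set to $\M^1$ is precisely what Assumption~\ref{a.extn3} (used only in Theorem~\ref{t:ex2}) is for; you are effectively importing that hypothesis. The paper sidesteps the whole $\liminf\to\lim$ issue by going through transience of the interior diffusion rather than through Ces\`aro limits, so no structure on $\M^2$ is needed: the conclusion $\E_\bx(\bigwedge_iX_i(t))^\delta\to0$ only asserts approach to \emph{some} boundary face, not to $\suppo(\mu)$.
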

The proof of this result is given in Section \ref{s:extin}.
\begin{rmk}
If an ergodic invariant measure $\mu$ with support on the boundary is an ``attractor'',
it will attract solutions starting nearby.
Intuitively, condition \eqref{ae3.1} forces $X_i(t), i\in I_\mu^c$ to get close to $0$
if the solution starts close to $\R^{\mu,\circ}_+$.
We need condition \eqref{ae3.2} to ensure that
$\mu$ is a ``sink'' in $\R^{\mu,\circ}_+$,
that is, if $\BX$ is close to $\R^{\mu,\circ}_+$,
it is not  pulled away to the boundary $\partial\R^{\mu,\circ}_+$ of $\R^{\mu,\circ}_+$ (see Remark \ref{r:degenerate}).

To prove Theorem \ref{t:ex},
using the idea above,
we construct a Lyapunov function $U$ vanishing on $\R^{\mu,\circ}_+$ such that
$\E_\bx U(\BX(T))\leq U(\bx)$ if $\bx\in\R^{n,\circ}_+$ sufficiently close to $\R^{\mu,\circ}_+$ and
$T$ is a sufficiently large time.
Then, we can construct a supermartigale to show that with a large probability
$X_i(t), i\in I_\mu^c$ cannot  go far from $0$
if the starting point of $\BX$ is sufficiently close to $\R^{\mu,\circ}_+$.
With some additional arguments from the theory of Markov processes,
we can show that $\BX$ has no invariant probability measure in $\R^{n,\circ}_+$
and approaches the boundary in some sense.

In the case when there is no persistence
one may want to know exactly which species go extinct and which survive.
We answer this question in Theorem \ref{thm4.2}.
Relying on the repulsion of invariant measures in $\M^2 = \M\setminus \M^1$
and properties of the randomized occupation measures,
we can deduce that the process $\BX$ must
enter the ``attracting'' region of some invariant measure in $\M^1$.
Finally, the attraction property of the measures from $\M^1$ helps us characterize
the survival and extinction of each species.
\end{rmk}
\begin{rmk}\label{r:degenerate}
If condition \eqref{ae3.2} does  not hold we could have the following bad situation. Assume there exists $\nu\in\M_\mu$ such that
\[
\max_{i\in I_\mu\setminus I_\nu}\left\{\lambda_i(\nu)\right\}=0.
\]
In this case $\nu$ is not always a ``repeller''. Solutions that start near $\R_+^{\mu,\circ}$ will tend to stay close to $R^\mu_+$ since $\lambda_i(\mu)<0, i \in I_\mu^c$. However, if $\nu$ is not a repeller the solutions may concentrate on $\R^\nu\subset \partial \R_+^\mu$. Now, if there exists $i^*\in I_\mu^c$ such that $\lambda_{i^*}(\nu)>0$ then solutions can be pushed away from $\R_+^\mu$ since $X_{i^*}(t)$ will tend to increase.

\end{rmk}

To characterize the extinction of specific populations, we need some additional conditions.
\begin{asp}\label{a.extn2}
Suppose that there is $\delta_1>0$ such that
$$
\lim\limits_{\|\bx\|\to\infty} \dfrac{\|\bx\|^{\delta_1}\sum_i g_i^2(\bx)}{1+\sum_i(|f_i(\bx)|+|g_i(\bx)|^2)}=0.
$$
Without loss of generality, suppose that $\delta_1\leq\delta_0$ (where $\delta_0$ is defined at the beginning of Section \ref{s:prep}).
\end{asp}
\begin{rmk}
Assumption \ref{a.extn2} forces the growth rates of $g_i^2(\cdot)$
to be slightly lower than those of $|f_i(\cdot)|$.
This is needed in order to suppress the diffusion part so that
we can obtain the tightness of  the random normalized occupation measures
$$\wtd \Pi_t(\cdot):=\dfrac1t\int_0^t\1_{\{\BX(s)\in\cdot\}}ds,\,t>0$$
as well as  the convergence of $\int_{\R^n_+}\left(f_i(\bx)-\dfrac{g_i^2(\bx)}2\right)\wtd \Pi_{t_k}(d\bx)$
to
$\lambda_i(\pi)$
given that $\wtd \Pi_{t_k}$ converges weakly to $\pi$ for some sequence $(t_k)_{k\in\N}$ with $\lim_{k\to\infty} t_k=\infty$.
Having these properties, we can analyze the asymptotic behavior of the sample paths of the solution.
\end{rmk}

To describe exactly which populations go extinct,
we need an additional assumption which ensures that apart from those in $\Conv(\M^1)$,
invariant probability measures are ``repellers''.
\begin{asp}\label{a.extn3}
Suppose that one of the following is true
\begin{itemize}
  \item $\M^2=\emptyset$

  \item For any $\nu\in\Conv(\M^2)$, $\max_{\{i=1,\dots,n\}}\left\{\lambda_i(\nu)\right\}>0.$
\end{itemize}
\end{asp}
For any initial condition $\BX(0)=\bx\in\R^n_+$,
denote the weak$^*$-limit set of the family $\left\{\wtd \Pi_t(\cdot), t\geq 1\right\}$ by $\U=\U(\omega)$.

\begin{thm}\label{t:ex2}
Suppose that Assumptions \ref{a.nonde}, \ref{a.extn2} and \ref{a.extn3} are satisfied and $\M^1\neq \emptyset$.
Then for any $\bx\in\R^{n,\circ}_+$
\begin{equation}\label{e0-thm4.2}
\sum_{\mu\in\M^1} P_\bx^\mu=1
\end{equation}
where
$$P_\bx^\mu:=\PP_\bx\left\{\U(\omega)=\{\mu\}\,\text{ and }\,\lim_{t\to\infty}\dfrac{\ln X_i(t)}t=\lambda_i(\mu)<0, i\in I_\mu^c\right\}>0, \bx\in\R^{n,\circ}_+, \mu\in \M^1.$$
\end{thm}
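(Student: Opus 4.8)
The plan is to combine the two mechanisms already established in Theorems \ref{t:pers}, \ref{t:ex} and (the hinted) Theorem \ref{thm4.2}: repulsion away from measures that are not in $\M^1$, and attraction toward (and extinction driven by) the measures in $\M^1$. First I would invoke Assumption \ref{a.extn2} to get tightness of the random normalized occupation measures $\wtd\Pi_t$; as noted in the remark after Assumption \ref{a.extn2}, this assumption is exactly what guarantees that along any sequence $t_k\to\infty$ with $\wtd\Pi_{t_k}\Rightarrow\pi$ we have $\int(f_i-\tfrac{g_i^2}2)\,d\wtd\Pi_{t_k}\to\lambda_i(\pi)$, so that $\tfrac1{t_k}\ln X_i(t_k)\to\lambda_i(\pi)$ modulo the negligible martingale term. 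Standard arguments (as in \cite{B14, SBA11}) then show that every $\pi\in\U(\omega)$ is almost surely an invariant probability measure of $\BX$, so $\U(\omega)$ is a.s.\ a nonempty compact connected subset of the set of invariant measures.

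Next I would rule out, one class at a time, every candidate limit point not of the form $\{\mu\}$ with $\mu\in\M^1$. If $\pi$ has mass in $\R_+^{n,\circ}$, then by Assumption \ref{a.extn3} (second alternative) together with the persistence-type argument of Theorem \ref{t:pers} — more precisely, using Assumption \ref{a.extn3} applied to the boundary part and the positivity of some $\lambda_i$ on any convex combination involving interior mass — one derives a contradiction with $\BX$ actually spending asymptotically positive fraction of time near $\pi$; the cleanest route is to note that under Assumptions \ref{a.nonde}, \ref{a.extn2}, \ref{a.extn3} there is \emph{no} invariant probability measure on $\R_+^{n,\circ}$ (this is the content of the Theorem \ref{thm4.1}/\ref{thm4.2} machinery applied with $\M^1\neq\emptyset$, via Theorem \ref{t:ex}), hence $\U(\omega)\subset\{\text{measures supported on }\partial\R_+^n\}$, and then every element of $\U(\omega)$ is a convex combination of elements of $\M$. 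If $\U(\omega)$ met $\Conv(\M^2)$, the repulsion estimate $\max_i\lambda_i(\nu)>0$ from Assumption \ref{a.extn3} would force $\tfrac1t\ln X_i(t)$ to be bounded below by a positive constant along a subsequence, contradicting that $\BX$ stays near that $\nu$; this is the same Lyapunov/occupation-measure repulsion argument used in Theorem \ref{t:pers}. Therefore $\U(\omega)\subset\Conv(\M^1)$ a.s. Finally, within $\Conv(\M^1)$ one uses the connectedness of $\U(\omega)$ together with the attraction property from Theorem \ref{t:ex}: for $\mu\in\M^1$, condition \eqref{ae3.1} makes $X_i$, $i\in I_\mu^c$, decay and condition \eqref{ae3.2} prevents escape to $\partial\R_+^\mu$, so near $\R_+^{\mu,\circ}$ the dynamics are trapped. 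A connected subset of $\Conv(\M^1)$ on which the dynamics are simultaneously attracting toward each $\R_+^{\mu,\circ}$ must collapse to a single $\{\mu\}$; once $\U(\omega)=\{\mu\}$, ergodicity of $\mu$ and the occupation-measure convergence give $\tfrac1t\ln X_i(t)\to\lambda_i(\mu)$ for all $i$, and $\lambda_i(\mu)<0$ for $i\in I_\mu^c$ by \eqref{ae3.1}. Summing over $\mu\in\M^1$ then yields \eqref{e0-thm4.2}, and the positivity $P_\bx^\mu>0$ for each $\mu\in\M^1$ follows from the corresponding statement in Theorem \ref{thm4.2} (Theorem \ref{t:ex} with $\mu$ as the sink), since starting from any $\bx\in\R_+^{n,\circ}$ there is positive probability of entering the attracting neighborhood of $\R_+^{\mu,\circ}$.

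Two technical points deserve care. First, the passage $\tfrac1{t_k}\ln X_i(t_k)\to\lambda_i(\pi)$ requires controlling $\tfrac1{t_k}\int_0^{t_k} g_i(\BX(s))\,dE_i(s)$; this is handled by the strong law of large numbers for martingales once one has enough integrability of $\int_0^t g_i^2(\BX(s))\,ds$, which is exactly why Assumption \ref{a.extn2} (and the moment bound built from Assumption \ref{a.nonde}(3), with exponent $\delta_0$) is imposed. Second, identifying members of $\U(\omega)$ as invariant measures, and then as convex combinations of ergodic boundary measures, uses that the limiting occupation measure is a.s.\ invariant and that invariant measures on $\partial\R_+^n$ decompose into the ergodic pieces in $\M$ via the ergodic decomposition restricted to each face (the faces being invariant because $x_i\equiv 0$ is preserved by \eqref{e:system}).

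The main obstacle I expect is the step showing $\U(\omega)\subset\Conv(\M^1)$ — i.e.\ simultaneously excluding interior-supported limits and $\Conv(\M^2)$-supported limits — since it requires running the repulsion (Lyapunov-function) argument uniformly over the infinitely many possible convex combinations and patching it with the non-existence of an interior invariant measure; the connectedness argument that then forces $\U(\omega)$ down to a single ergodic $\mu\in\M^1$ is the second delicate piece, as one must exploit that distinct measures in $\M^1$ have "incompatible" attracting neighborhoods so that a connected limit set cannot straddle two of them.
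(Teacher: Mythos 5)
Your overall strategy --- repulsion from $\M^2$, attraction to $\M^1$ --- matches the paper's, and your two technical asides (martingale control via Assumption \ref{a.extn2}, ergodic decomposition face by face) are sound. The gap is in the two central steps, exactly where you flag difficulty.

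First, you claim that ruling out limits in $\Conv(\M^2)$ gives $\U(\omega)\subset\Conv(\M^1)$. It does not. Every $\pi\in\U(\omega)$ is a convex combination $(1-\rho)\pi_1+\rho\pi_2$ with $\pi_1\in\Conv(\M^1)$, $\pi_2\in\Conv(\M^2)$, and the repulsion estimate only excludes $\rho=1$: for $\rho<1$ one has $\sum_i q_i\lambda_i(\pi)=(1-\rho)\sum_i q_i\lambda_i(\pi_1)+\rho\sum_i q_i\lambda_i(\pi_2)$, and the $\pi_1$-term has no sign control, so the total can easily be negative. The paper accordingly proves (and only needs) the weaker statement $\U(\omega)\subset\Conv(\M)\setminus\Conv(\M^2)$ a.s., i.e.\ every limit measure assigns positive weight to $\Conv(\M^1)$.

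Second, even if you had $\U(\omega)\subset\Conv(\M^1)$, the proposed connectedness argument is not enough to collapse $\U(\omega)$ to a single extreme point: $\Conv(\M^1)$ is a simplex, and "incompatible attracting neighborhoods in state space" does not directly forbid $\U(\omega)$ being, say, a segment in that simplex --- the attraction statements live on $\R^n_+$, not on the space of measures. The paper's mechanism is instead spatial. From $\U(\omega)\subset\Conv(\M)\setminus\Conv(\M^2)$ and a continuous test function $\psi$ supported in $\bigcup_{\mu\in\M^1}\K^{k,\Delta}_\mu$, one gets $\liminf_t \tfrac1t\int_0^t\psi(\BX(s))\,ds>0$ a.s., hence $\BX$ enters some $\K^{k,\Delta}_\mu$ a.s. Then Lemma \ref{lm4.8} (a supermartingale trapping estimate built on Proposition \ref{prop4.1}, combined with Lemma \ref{lm4.9} to exclude $\U(\omega)$ lodging in $\Conv(\M_\mu\cup\{\mu\})\setminus\{\mu\}$) shows that from $\K^{k,\Delta}_\mu$, with probability $>1-\eps$, one gets $\U(\omega)=\{\mu\}$ together with the exponential decay of $X_i$, $i\in I_\mu^c$. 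Summing over $\mu\in\M^1$ and letting $\eps\to0$ gives \eqref{e0-thm4.2}; the positivity $P^\mu_\bx>0$ comes, as you say, from nondegeneracy. You should also treat the case $\M^2=\emptyset$ separately --- there the paper observes it forces $\M=\M^1=\{\bdelta^*\}$.
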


\begin{rmk}
Our results can be easily modified and applied to SDE living on smooth enough domains $D\subset \R^n$. We chose to work on $[0,\infty)^n$ because it was the most natural non-compact example for the dynamics of biological populations. In particular one can recover and extend the results from \cite{SBA11} where the authors looked at the state space $D=\{(y_1,\dots,y_n)\in\R^{n}_+: y_1+\dots+y_n=1\}$ .
\end{rmk}

\section{Examples}\label{s:examp}

We present some applications of our main results. We will make use of Theorems \ref{t:pers},\ref{t:ex}, and \ref{t:ex2} together with the following intuitive lemma whose proof is postponed to Section \ref{s:extin}.
\begin{lm}\label{l:int}
For any $\mu\in\M$ and $i\in I_\mu$ we have
$\lambda_i(\mu)=0.$
\end{lm}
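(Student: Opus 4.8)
The plan is to use the ergodic (stationary) property of $\mu$ together with It\^o's formula applied to $\ln X_i(t)$ for $i\in I_\mu$. Fix $\mu\in\M$ and $i\in I_\mu$, so that $\mu$ is an ergodic invariant probability measure whose support $\R_+^\mu$ sits inside the coordinate subspace on which the $i$-th coordinate is strictly positive $\mu$-almost everywhere. The key identity comes from It\^o's Lemma: for a solution started from $\bx\in\R_+^{\mu,\circ}$,
\[
\frac{\ln X_i(t)}{t}=\frac{\ln X_i(0)}{t}+\frac1t\int_0^t\left(f_i(\BX(s))-\frac{\sigma_{ii}g_i^2(\BX(s))}{2}\right)ds+\frac1t\int_0^t g_i(\BX(s))\,dE_i(s).
\]
The first step is to justify that, under $\PP_\mu$ (i.e. with $\BX(0)$ distributed according to $\mu$), the process stays in $\R_+^\mu$ for all time, so that $\ln X_i(t)$ is well-defined and the dynamics reduce to the lower-dimensional non-degenerate system on $\R_+^\mu$; this follows from invariance of $\mu$ and uniqueness of strong solutions established in Section \ref{s:prep}.

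Next I would appeal to the ergodic theorem for the stationary process $(\BX(t))_{t\ge0}$ under $\PP_\mu$. Since $\mu$ is ergodic, Birkhoff's ergodic theorem gives, $\PP_\mu$-almost surely,
\[
\frac1t\int_0^t\left(f_i(\BX(s))-\frac{\sigma_{ii}g_i^2(\BX(s))}{2}\right)ds\xrightarrow[t\to\infty]{}\lambda_i(\mu),
\]
which is finite and well-defined by Lemma \ref{lm2.3}. For the martingale term, one checks that $M_t:=\int_0^t g_i(\BX(s))\,dE_i(s)$ has quadratic variation $\langle M\rangle_t=\sigma_{ii}\int_0^t g_i^2(\BX(s))\,ds$, whose time-average converges $\PP_\mu$-a.s. to $\sigma_{ii}\int g_i^2\,d\mu<\infty$; hence $\langle M\rangle_t=O(t)$, and the strong law of large numbers for continuous local martingales (e.g. $M_t/\langle M\rangle_t\to0$ a.s. on $\{\langle M\rangle_\infty=\infty\}$, together with the trivial case $\langle M\rangle_\infty<\infty$) yields $M_t/t\to0$ $\PP_\mu$-a.s. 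The boundary term $\ln X_i(0)/t\to0$ trivially since $X_i(0)>0$ a.s. under $\mu$. Combining, $\frac{\ln X_i(t)}{t}\to\lambda_i(\mu)$ $\PP_\mu$-a.s.

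Finally I would argue that $\lambda_i(\mu)$ cannot be nonzero. Suppose $\lambda_i(\mu)>0$. Then $\ln X_i(t)\to\infty$ $\PP_\mu$-a.s., so $X_i(t)\to\infty$ a.s.; but under the stationary measure the law of $X_i(t)$ equals the $i$-th marginal of $\mu$ for every $t$, which is a fixed probability measure on $(0,\infty)$, contradicting $X_i(t)\to\infty$ in probability. Symmetrically, if $\lambda_i(\mu)<0$ then $X_i(t)\to0$ a.s., again contradicting stationarity of the law of $X_i(t)$ (it would force the marginal to be the Dirac mass at $0$, incompatible with $\mu$ being supported on $\R_+^\mu$ with $i\in I_\mu$). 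Hence $\lambda_i(\mu)=0$. The main obstacle I anticipate is the integrability bookkeeping needed to invoke Birkhoff's theorem and the martingale SLLN — namely verifying $\int_{\R_+^n}(|f_i(\bx)|+g_i^2(\bx))\,\mu(d\bx)<\infty$, which should be exactly the content of Lemma \ref{lm2.3} (referenced already in Assumption \ref{a.coexn}); the rest is a clean stationarity-versus-drift contradiction.
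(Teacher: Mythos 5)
Your proposal is correct and follows essentially the same path as the paper: It\^o's formula for $\ln X_i(t)$, the ergodic SLLN for the drift term plus the martingale SLLN for the stochastic integral to get $\ln X_i(t)/t\to\lambda_i(\mu)$ a.s., and then a stationarity/recurrence contradiction to force $\lambda_i(\mu)=0$. The only cosmetic difference is that the paper runs the argument from a fixed $\bx_0\in\R_+^{\mu,\circ}$ and phrases the last step as ``$X_i(t)$ can go to neither $0$ nor $\infty$'' (positive recurrence of the non-degenerate diffusion restricted to $\R_+^{\mu,\circ}$), whereas you run it under $\PP_\mu$ and phrase it as the fixed marginal law of $X_i(t)$ being incompatible with $X_i(t)\to 0$ or $\infty$; these are equivalent.
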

\begin{rmk}
The intuition behind Lemma \ref{l:int} is the following: if we are inside the support of an ergodic invariant measure $\mu$ then we are at an `equilibrium' and the process does not tend to grow or decay.
\end{rmk}

\begin{example}\label{ex1}
Consider a stochastic Lotka-Voltera competitive model
\begin{equation}\label{e1-ex1}
\begin{cases}
dX_1(t)=X_1(t)[a_1-b_1X_1(t)-c_1X_2(t)]dt+X_1(t)dE_1(t)\\
dX_2(t)=X_2(t)[a_2-b_2X_2(t)-c_2X_1(t)]dt+X_2(t)dE_1(t)
\end{cases}
\end{equation}
where $b_i,c_i > 0, i=1,2$.
It is straightforward to see that $\lambda_i(\bdelta^*)=a_i-\dfrac{\sigma_{ii}}2, i=1,2$.
If $\lambda_1(\bdelta^*)<0$, (resp. $\lambda_2(\bdelta^*)<0$) there is no invariant probability measure on $\R^{\circ}_{1+}:=\{(x_1,0): x_1>0\}$ (resp. $\R^{\circ}_{2+}:=\{(0,x_2): x_2>0\}$)
in view of Theorem \ref{t:pers}.
If $\lambda_i(\bdelta^*)>0$, there is a unique invariant probability measure $\mu_i$ on $\R^{\circ}_{i+}$, $i=1,2$.
By Lemma \ref{l:int}, we have
$$\lambda_i(\mu_i)=a_i-\dfrac{\sigma_{ii}}2-b_i\int_{\R^{\circ}_{i+}}x_i\mu_i(d\bx)=0$$
which implies
\begin{equation}\label{e2-ex1}
\int_{\R^{\circ}_{i+}}x_i\mu_i(d\bx)=\dfrac{2a_i-\sigma_{ii}}{2b_i}.
\end{equation}
Thus
$$\lambda_2(\mu_1)=\int_{\R^{\circ}_{1+}}\left[a_2-\frac{\sigma_{22}}{2}-c_2x_1\right]\mu_1(d\bx)=a_2-\frac{\sigma_{22}}{2}-c_2\dfrac{2a_1-\sigma_{11}}{2b_1}$$
and
$$\lambda_1(\mu_2)=\int_{\R^{\circ}_{2+}}[a_1-\frac{\sigma_{11}}{2}-c_1x_2]\mu_2(d\bx)=a_1-\frac{\sigma_{11}}{2}-c_1\dfrac{2a_2-\sigma_{22}}{2b_2}.$$
Using Theorems \ref{t:pers} and \ref{t:ex2}, we have the following classification.
\begin{itemize}

\item If $\lambda_1(\bdelta^*)>0, \lambda_2(\bdelta^*)>0$ and $\lambda_1(\mu_2)>0, \lambda_2(\mu_1)>0$,
any invariant probability measure in $\partial\R_+^2$ has the form
$\mu=p_0\bdelta^*+p_1\mu_1+p_2\mu_2$ with $0\leq p_0,p_1,p_2$ and $p_0+p_1+p_2=1$.
It can easily be verified that
$\max_{i=1,2}\left\{\lambda_i(\mu)\right\}>0$
for any $\mu$ having the form above.
As a result there is a unique invariant probability measure $\pi^*$ on $\R^{2,\circ}_+$
and $P(t,\bx,\cdot), \bx\in\R^{2,\circ}_+$ converges to $\pi^*$ in total variation exponentially fast.
\item If $\lambda_i(\bdelta^*)<0, i=1,2$ then $X_i(t)$ converges to $(0,0)$ almost surely with the exponential rate $\lambda_i(\bdelta^*)$ for any initial condition $\bx=(x_1,x_2)\in\R^{2,\circ}_+$.
\item If $\lambda_i(\bdelta^*)>0, \lambda_j(\bdelta^*)<0$ for one $i\in\{1,2\}$ and $j\in\{1,2\}\setminus\{i\}$, then $\lambda_j(\mu_i)<0$  and $X_j(t)$ converges to $0$ almost surely with the exponential rate $\lambda_j(\mu_i)$ for any initial condition $\bx=(x_1,x_2)\in\R^{2,\circ}_+$ and the randomized occupation measure
converges weakly to $\mu_i$ almost surely.
\item If $\lambda_i(\bdelta^*)>0$, $\lambda_j(\mu_i)<0$ for both $i\in\{1,2\}, j\in\{1,2\}\setminus\{i\}$  then $p^\bx_i>0,i=1,2$ and $p^\bx_1+p^\bx_2=1$ where
$$p^\bx_i=\PP_\bx\left\{\U(\omega)=\{\mu_i\}\text{ and }\lim_{t\to\infty}\dfrac{\ln X_j(t)}t=\lambda_j(\mu_i), j\in\{1,2\}\setminus\{i\}\right\}.$$
\item If $\lambda_1(\bdelta^*)>0, \lambda_2(\bdelta^*)>0$, $\lambda_j(\mu_i)<0, \lambda_i(\mu_j)>0$ for $i,j\in\{1,2\}, i\ne j$ then $X_j(t)$ converges to $0$ almost surely with the exponential rate $\lambda_j(\mu_i)$ and the randomized occupation measure
converges weakly to $\mu_i$ almost surely for any initial condition $\bx=(x_1,x_2)\in\R^{2,\circ}_+$.
\end{itemize}
This extends and generalizes the results from \cite{EHS15}.

\end{example}
\begin{example}\label{ex2}
Consider a stochastic Lotka-Voltera model with two predators competing for one prey.
\begin{equation}\label{e1-ex2}
\begin{cases}
dX_1(t)=X_1(t)[a_1-b_1X_1(t)-c_{21}X_2(t)-c_{31}X_3(t)]dt+X_1(t)dE_1(t)\\
dX_2(t)=X_2(t)[-a_2-b_2X_2(t)+c_{12}X_1(t)-c_{32}X_3(t)]dt+X_2(t)dE_2(t)\\
dX_3(t)=X_3(t)[-a_3-b_3X_3(t)+c_{13}X_1(t)-c_{23}X_2(t)]dt+X_2(t)dE_3(t)\\
\end{cases}
\end{equation}
Assume that $a_i, b_i>0, i=1,3$, $c_{21}, c_{31}, c_{12}, c_{21}, c_{31}, c_{32}\geq0.$
Note that, if $c_{23}>0$ and $c_{32}>0$,
then \eqref{e1-ex2} describes an interacting population of  two predators $(X_2, X_3)$ competing for one prey $X_1$.
If $c_{23}<0$ and $c_{32}>0$ then
\eqref{e1-ex2} is a model of a tri-trophic food chain where $X_3$ is the top predator and $X_2$ is
the intermediate species.

In order to analyze this model,
first, consider the equation on the boundary $\{(0, x_2, x_3): x_2, x_3\geq0\}$.
Since $\lambda_i(\bdelta^*)=-a_i-\frac{\sigma_{ii}}{2}<0, i=2,3$,
an application of Theorem \ref{t:ex} to the space $\{(0, x_2, x_3): x_2, x_3\geq0\}$
shows that there is only one invariant probability measure on $\{(0, x_2, x_3): x_2, x_3\geq0\}$,
which is $\bdelta^*$.
It indicates that without the prey, both predators die out.

Now, consider the equation on the boundaries $\R_{12+}:=\{(x_1, x_2, 0): x_1, x_2\geq0\}$
and
$\R_{13+}:=\{(x_1, 0, x_3): x_1, x_3\geq0\}$.
If $\lambda_1(\bdelta^*)=a_1-\frac{\sigma_{11}}{2}<0$, $\bdelta^*$ is the unique invariant probability measure on $\R^3_+$
by virtue of Theorem \ref{t:ex}.
If $\lambda_1(\bdelta^*)>0$, there is an invariant probability measure $\mu_1$ on $\R^{\circ}_{1+}:=\{(x_1, 0, 0): x_1>0\}$.
Similarly to \eqref{e2-ex1}, we have
\begin{equation}\label{e2-ex2}
\int_{\R^{\circ}_{1+}}x_1\mu_1(d\bx)=\dfrac{2a_1-\sigma_{11}}{2b_1}.
\end{equation}
Thus
$$\lambda_i(\mu_1)=\int_{\R^{\circ}_{1+}}[-a_i-\frac{\sigma_{ii}}{2}+c_{1i}x_1]\mu_1(d\bx)=-a_i-\frac{\sigma_{ii}}{2}+c_{1i}\dfrac{2a_1-\sigma_{11}}{2b_1}, i=2,3.$$
If $\lambda_1(\bdelta^*)>0$ and $\lambda_i(\mu_1)<0, i=2,3$, by Theorem \ref{t:ex},
there is no invariant probability measure on $\R^\circ_{1i+}.$

If  $\lambda_1(\bdelta^*)>0$ and $\lambda_2(\mu_1)>0$, by Theorem \ref{t:pers},
there is an invariant probability measure $\mu_{12}$ on $\R^\circ_{12+}.$
In light of Lemma\ref{l:int},
we have
$$
\int_{\R^{\circ}_{12+}}x_1\mu_{12}(d\bx)=A_1, \int_{\R^{\circ}_{12+}}x_2\mu_{12}(d\bx)=A_2$$
where $(A_1, A_2)$ be the unique solution to
$$
\begin{cases}
a_1-\frac{\sigma_{11}}{2}-b_1x_1-c_{21}x_2=0\\
-a_2-\frac{\sigma_{22}}{2}-b_2x_2+c_{12}x_1=0.
\end{cases}
$$
In this case,
$$\lambda_3(\mu_{12})=\int_{\R^{\circ}_{12+}}\left[-a_3-\frac{\sigma_{33}}{2}+c_{13}x_1-c_{23}x_2\right]\mu_{12}(d\bx)=-a_3-\frac{\sigma_{33}}{2}+c_{13}A_1-c_{23}A_2.$$
Similarly, if $\lambda_1(\bdelta^*)>0$ and $\lambda_3(\mu_1)>0$, by Theorem \ref{t:pers},
there is an invariant probability measure $\mu_{13}$ on $\R^\circ_{13+}$
and
$$\lambda_2(\mu_{13})=\int_{\R^{\circ}_{13+}}\left[-a_2-\frac{\sigma_{22}}{2}+c_{12}x_1-c_{32}x_3\right]\mu_{13}(d\bx)=-a_2-\frac{\sigma_{22}}{2}+c_{12}\hat A_1-c_{32}\hat A_3$$
where
$(\hat A_1, \hat A_3)$ is the unique solution to
$$
\begin{cases}
a_1-\frac{\sigma_{11}}{2}-b_1x_1-c_{31}x_3=0\\
-a_3-\frac{\sigma_{33}}{2}-b_3x_3+c_{13}x_1=0.
\end{cases}
$$
By the ergodic decomposition theorem,
every invariant probability measure on $\partial \R^3_+$
is a convex combination of $\delta^*, \mu_1,\mu_{12},\mu_{13}$ (when these measures exist).
Some computations for the Lyapunov exponents
with respect to a convex combination of these ergodic measures together with an application of Theorem \ref{t:pers} show that $P(t,\bx,\cdot),\bx\in\R^{3,\circ}_+$
converges exponentially fast to an invariant probability measure $\pi^*$ on $\R^{3,\circ}_+$ if one of the following is satisfied.
\begin{itemize}
\item $\lambda_1(\bdelta^*)>0$, $\lambda_2(\mu_1)>0, \lambda_3(\mu_1)<0$ and $\lambda_3(\mu_{12})>0$.
\item $\lambda_1(\bdelta^*)>0$, $\lambda_2(\mu_1)<0, \lambda_3(\mu_1)>0$    and $\lambda_2(\mu_{13})>0$.
\item $\lambda_1(\bdelta^*)>0$, $\lambda_2(\mu_1)>0, \lambda_3(\mu_1)>0$, $\lambda_3(\mu_{12})>0$,   and $\lambda_2(\mu_{13})>0$. 
\end{itemize}

As an application of Theorem \ref{t:ex2}, we have the following classification for extinction.
\begin{itemize}
\item If $\lambda_1(\bdelta^*)<0$ then for any initial condition $\bx\in\R^{3,\circ}_+$, $X_1(t), X_2(t), X_3(t), $ converge to $0$ almost surely with the exponential rates $\lambda_i(\bdelta^*), i=1,2,3$ respectively.
\item If $\lambda_1(\bdelta^*)>0$, $\lambda_i(\mu_1)<0, i=2,3$ then $X_i(t), i=2,3$ converge to $0$ almost surely with the exponential rate $\lambda_i(\mu_1), i=2,3$ respectively and the occupation measure converges almost surely for any initial condition $\bx\in\R^{3,\circ}_+$ to $\mu_1$.
\item If $\lambda_1(\bdelta^*)>0$, $\lambda_i(\mu_1)>0, \lambda_j(\mu_{1i})<0, \lambda_j(\mu_{1})<0$  for $i,j\in\{2,3\}, i\ne j$  then $X_j(t)$ converges to  $0$ almost surely with the exponential rate $\lambda_j(\mu_{1i})$ and the occupation measure converges almost surely for any initial condition $\bx\in\R^{3,\circ}_+$ to $\mu_{1i}$.
\item  If $\lambda_1(\bdelta^*)>0$, $\lambda_2(\mu_1)>0, \lambda_3(\mu_1)>0$, $\lambda_j(\mu_{1i})<0, \lambda_i(\mu_{1j})>0$ for $i,j\in\{2,3\}, i\ne j$  then $X_j(t)$ converges to  $0$ almost surely with the exponential rate $\lambda_j(\mu_{1i})$ and the occupation measure converges almost surely for any initial condition $\bx\in\R^{3,\circ}_+$ to $\mu_{1i}$.
\item If $\lambda_1(\bdelta^*)>0$, $\lambda_2(\mu_1)>0, \lambda_3(\mu_1)>0$, $\lambda_2(\mu_{13})<0, \lambda_3(\mu_{12})<0$  then $p^\bx_i>0,i=2,3$ and $p^\bx_2+p^\bx_3=1$ where
$$p^\bx_i=\PP_\bx\left\{\U(\omega)=\{\mu_{1i}\}\,\text{ and }\,\lim_{t\to\infty}\dfrac{\ln X_i(t)}t=\lambda_j(\mu_{1i}), j\in\{2,3\}\setminus\{i\}\right\}.$$
\end{itemize}
Elementary but tedious computations show that
our results significantly improve those in \cite{LB16}.

Restricting our analysis to $\R_{12+}$ (this describes the evolution of one predator and its prey) we get
\begin{equation}\label{e3-ex2}
\begin{cases}
dX_1(t)=X_1(t)[a_1-b_1X_1(t)-c_{21}X_2(t)]dt+X_1(t)dE_1(t)\\
dX_2(t)=X_2(t)[-a_2-b_2X_2(t)+c_{12}X_1(t)]dt+X_2(t)dE_2(t).
\end{cases}
\end{equation}
In view of the analysis above,
if $\lambda_1(\bdelta^*)<0$ then $X_1(t), X_2(t)$ converge to 0 almost surely with the exponential rates $\lambda_1(\bdelta^*)$ and
$\lambda_2(\bdelta^*)=-a_2-\frac{\sigma_{22}}{2}$ respectively.
If $\lambda_1(\bdelta^*)>0$ and $\lambda_2(\mu_1)<0$ then
$X_2$ converges to $0$ almost surely with the exponential rate $\lambda_2(\mu_1)$ and the occupation measure of the process $(X_1,X_2)$ converges to $\mu_1$.
If $\lambda_1(\bdelta^*)>0,\lambda_2(\mu_1)>0$,
the transition probability of $(X_1(t), X_2(t))$ on $\R^\circ_{12+}$
converges to an invariant probability measure in total variation with an exponential rate.
These results are similar to those appearing in \cite{R03,RP07}.
However, we generalize their results by obtaining a geometric rate of convergence.

\begin{rmk}
The condition for persistence in \cite{R03,RP07}is obtained by
constructing a Lyapunov function $V$ satisfying  $\Lom V(\bx)\leq -a,\bx\in\R^{2,\circ}_+$
for some $a>0$.
The papers describe how to construct the functions $V$ rather than giving an explicit formula.
It seems to us that the function $V$ constructed in \cite{R03} is not twice differentiable.
\end{rmk}
\end{example}

\begin{example}\label{ex3}
Consider a stochastic Lotka-Voltera cooperative model
\begin{equation}\label{e1-ex4}
\begin{cases}
dX_1(t)=X_1(t)[a_1-b_1X_1(t)+c_1X_2(t)]dt+X_1(t)dE_1(t)\\
dX_2(t)=X_2(t)[a_2-b_2X_2(t)+c_2X_1(t)]dt+X_2(t)dE_2(t)
\end{cases}
\end{equation}
where $a_i,b_i,c_i > 0, i=1,2$.
As shown in Example \ref{ex1},
there exist unique invariant probability measures
$\mu_i$ on $\R_{i+}^\circ, i=1,2$ (defined in Example \ref{ex1}).
Suppose further that

$$\lambda_2(\mu_1)=\int_{\R^{\circ}_{1+}}\left[a_2-\frac{\sigma_{22}}{2}+c_2x_1\right]\mu_1(d\bx)=a_2-\frac{\sigma_{22}}{2}+c_2\dfrac{2a_1-\sigma_{11}}{2b_1}>0,$$

$$\lambda_1(\mu_2)=\int_{\R^{\circ}_{2+}}\left[a_1-\frac{\sigma_{11}}{2}+c_1x_2\right]\mu_2(d\bx)=a_2-\frac{\sigma_{22}}{2}+c_1\dfrac{2a_2-\sigma_{22}}{2b_2}>0.$$
		
and
\begin{equation}\label{e:coop}
b_1b_2-c_1c_2<0.
\end{equation}
\begin{rmk}
We note that a similar example has been studied in \cite{CM10}. The main difference is that the authors of \cite{CM10} consider demographic stochasticity instead of environmental stochasticity; their diffusion terms look like $\sqrt{X_i(t)}dE_i(t)$. In their setting the diffusion hits one of the two axes in finite time almost surely and they study the existence of quasi-stationary distributions (since there are no non-trivial stationary distributions). We note however that they still need condition \eqref{e:coop} together with some other symmetry assumptions.
\end{rmk}

Standard computations show that part (3) of Assumption \ref{a.nonde} is not satisfied by this model.
Since $a_i-\frac{\sigma_{ii}}{2}>0$ and $\lambda_i>0, i=1,2$,
Assumption \ref{a.coexn} holds.
However, we show that  the solution either blows up in finite time almost surely or there is no invariant  measure on $\R^{2,\circ}_+$.

We argue by contradiction. Suppose  $(X_1(t), X_2(t))$ does not blow up in finite time and has an invariant measure on $\R^{2,\circ}_+$. As a result $(X_1(t), X_2(t))$ is a recurrent process.
It follows from It\^o's formula that
$$
\begin{aligned}
\dfrac{b_2\ln X_1(t)+b_1\ln X_2(t)}{t}
=&\dfrac{b_2\ln X_1(0)+b_1\ln X_2(0)}{t}+b_2\left(a_1-\frac{\sigma_{11}}{2}\right)+b_1\left(a_2-\frac{\sigma_{22}}{2}\right)\\
&+(c_1c_2-b_1b_2)\dfrac1t\int_0^tX_1(s)ds+\dfrac{1}t\int_0^t(b_2dE_1(s)+b_1dE_2(s))
\end{aligned}
$$
Since
$$
\lim_{t\to\infty}\dfrac{1}t\int_0^t(b_2dE_1(s)+b_1dE_2(s))=0\text{ a.s.},
$$
$b_2\left(a_1-\frac{\sigma_{11}}{2}\right)+b_1\left(a_2-\frac{\sigma_{22}}{2}\right)>0$ and $c_1c_2-b_1b_2>0$,
it follows that
$$
\limsup_{t\to\infty}\dfrac{b_2\ln X_1(t)+b_1\ln X_2(t)}{t}>0\text{ a.s.}.
$$
Thus, $(X_1(t), X_2(t))$ cannot be a recurrent process in $\R^{2,\circ}_+$. This is a contradiction.

\end{example}
\begin{example}\label{ex4}
Consider the two dimensional system
\begin{equation}\label{e1-ex5}
dX_i(t)=X_i(t) f_i(\BX(t))dt+X_i(t)g_i(\BX(t))dE_i(t), ~i=1,2.
\end{equation}
Suppose that Assumptions \eqref{a.nonde} and \eqref{a.extn2} hold.
If $\lambda_i(\bdelta^*)=f_i(\0)-\frac{1}{2}g_i^2(\0)\sigma_{ii}>0$
then
$(\BX(t))$ has a unique invariant probability measure $\mu_i$
on $\R^\circ_{i+}$ (which is defined as in Example \ref{ex1}).
The density $p_i(\cdot)$ of $\mu_i$ can be found explicitly (in terms of integrals)
by solving the Fokker-Plank equation
$$-\dfrac{d}{du}[p_i(u)f_i(\hat u)]+\frac{\sigma_{ii}}{2} \dfrac{d^2}{du^2}[p_i(u)g_i^2(\hat u)]=0, u>0$$
where $\hat u=(u,0)$ if $i=1$ and $\hat u=(0,u)$ if $i=2$.
Then $\lambda_j(\mu_i), i,j=1,2, i\ne j$ can be computed in terms of integrals.
Using arguments similar to those in Examples \ref{ex1} and \ref{ex2},
we have the following classification,
which generalizes the Lotka-Volterra competitive and predator-prey models in previous examples.
\begin{itemize}
\item If $\lambda_i(\bdelta^*)>0,i=1,2, \lambda_1(\mu_2)>0,\lambda_2(\mu_1)>0$ then  there is a unique invariant probability measure $\pi^*$ on $\R^{2,\circ}_+$
and $P(t,\bx,\cdot), \bx\in\R^{2,\circ}_+$ converges to $\pi^*$ in total variation exponentially fast.
\item If $\lambda_i(\bdelta^*)>0,\lambda_j(\bdelta^*)<0$, $\lambda_j(\mu_i)>0$ for some $i=1,2$ and $j\ne i$, then  there is a unique invariant probability measure $\pi^*$ on $\R^{2,\circ}_+$
and $P(t,\bx,\cdot), \bx\in\R^{2,\circ}_+$ converges to $\pi^*$ in total variation exponentially fast.
\item If $\lambda_i(\bdelta^*)<0, i=1,2$ then $X_i(t)$ converges to $0$ at the exponential rate $\lambda_i(\bdelta^*), i=1,2$.
\item If $\lambda_i(\bdelta^*)>0, \lambda_j(\mu_i)<0$ and $\lambda_j(\bdelta^*)<0$ for $i,j=1,2, i\ne j$ then $X_j(t)$ converges to $0$ at the exponential rate $\lambda_j(\mu_i)$
and the randomized occupation measure converges weakly to $\mu_i$.

\item If $\lambda_i(\bdelta^*)>0, \lambda_j(\mu_i)<0$ and $\lambda_j(\bdelta^*)>0, \lambda_i(\mu_j)>0$ for $i,j=1,2, i\ne j$ then $X_j(t)$ converges to $0$ at the exponential rate $\lambda_j(\mu_i)$
and the randomized occupation measure converges weakly to $\mu_i$.
\item If $\lambda_1(\bdelta^*)>0, \lambda_2(\bdelta^*)>0$, $\lambda_1(\mu_2)<0, \lambda_2(\mu_1)<0,$ then $p^\bx_i>0,i=1,2$ and $p^\bx_1+p^\bx_2=1$ where
$$p^\bx_i=\PP_\bx\left\{\U(\omega)=\{\mu_i\}\text{ and }\lim_{t\to\infty}\dfrac{\ln X_j(t)}t=\lambda_j(\mu_i), j\in\{1,2\}\setminus\{i\}\right\}.$$
\end{itemize}
\end{example}

\begin{example}

Our methods can also be used to study the simple food chain
\begin{equation}\label{e:stoc}
\begin{split}
dX_1(t) &= X_1(t)(a_{10} - a_{11}X_1(t) - a_{12}X_2(t))\,dt + X_1(t)\,dE_1(t)\\
dX_2(t) &= X_2(t)(-a_{20} + a_{21}X_1(t) - a_{22}X_2(t) - a_{23}X_3(t))\,dt+X_2(t)\,dE_2(t)\\
&\mathrel{\makebox[\widthof{=}]{\vdots}} \\
dX_{n-1}(t) &= X_{n-1}(t)(-a_{n-1,0}+a_{n-1,n-2}X_{n-2}(t) -a_{n-1,n-1}X_{n-1}(t)- a_{n-1,n}X_n)\,dt\\
&~~~~~+X_{n-1}(t)\,dE_{n-1}(t)\\
dX_n(t) &= X_n(t)(-a_{n0} + a_{n,n-1}X_{n-1}(t)-a_{nn}X_n(t))\,dt + X_n(t)\,dE_n(t).
\end{split}
\end{equation}
In this model $X_1$ describes a prey species, which is at the bottom of the food chain. The next $n-1$ species are predators. Species $1$ has a per-capita growth rate $a_{10}>0$ and its members compete for resources according to the intracompetition rate $a_{11}>0$. Predator species $j$ has a death rate $-a_{j0}<0$, preys upon species $j-1$ at rate $a_{j,j-1}>0$, competes with its own members at rate $a_{jj}> 0$ and is preyed upon by predator $j+1$ at rate $a_{j,j+1}>0$. The last species, $X_n$, is considered to be the apex predator of the food chain.
Define the stochastic growth rate $\tilde a_{10} := a_{10}-\frac{\sigma_{11}}{2}$ and the stochastic death rates $\tilde a_{j0} := a_{j0} + \frac{\sigma_{jj}}{2}, j=1,\dots,n$. For fixed $j\in \{1,\dots,n\}$ write down the system
\begin{equation}\label{e:systemm}
\begin{split}
-a_{11}x_1 - a_{12}x_2 &= -\tilde a_{10}\\
a_{21}x_1 - a_{22}x_2 - a_{23}x_3 &=\tilde a_{20}\\
&\mathrel{\makebox[\widthof{=}]{\vdots}} \\
a_{j-1,j-2}x_{j-2} - a_{j-1,j-1}x_{j-1} -a_{j-1,j}x_{j} &= \tilde a_{j-1,0}\\
a_{j,j-1}x_{j-1} - a_{jj}x_j &= \tilde a_{j0}.
\end{split}
\end{equation}
It is easy to show that \eqref{e:systemm} has a unique solution, say $(x^{(j)}_1,\dots,x^{(j)}_j)$.
Define the invasion rate of the $j+1$st predator by
\begin{equation}\label{e:inv_j+1}
\I_{j+1} = -\tilde a_{j+1,0} + a_{j+1,j} x^{(j)}_j.
\end{equation}
Furthermore, let
$$\R_+^{(j),\circ}:=\{\bx=(x_1,\dots,x_n)\in\R^n_+: x_k>0\,\text{ for } k\leq j; x_{k}=0\,\text{ for } j<k\leq n\}.$$
\begin{thm}\label{t:mainn}
Suppose $n\geq 2$, and $\BX(0)=\bx\in\R_+^{n,\circ}$. We have the following classification.
\begin{itemize}
\item [(i)] If $\I_n>0$ then the food chain $\BX$
is strongly stochastically persistent and its transition probability converges to its unique invariant probability measure $\pi^{(n)}$ on $\R_+^{n,\circ}$ exponentially fast in total variation.
\item[(ii)]  Suppose that $\I_{j^*}>0$ and $\I_{j^*+1}<0$ for some $j^*<n$. Then
\[
\PP_x\left\{\lim_{t\to\infty}\frac{\ln X_k(t)}{t}=\tilde a_{k0}\right\}= 1, k>j^*
\]
i.e. the predators $(X_{j^*+1},\dots,X_n)$ go extinct exponentially fast.
At the same time, the normalized occupation measure of $(X_1,\dots,X_{j^*})$ converges weakly to the unique invariant probability measure $\pi^{(j^*)}$ on $\R_+^{(j^*),\circ}$.

\end{itemize}
\end{thm}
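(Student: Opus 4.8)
The plan is to derive Theorem \ref{t:mainn} from the general results -- Theorem \ref{t:pers} for part (i) and Theorem \ref{t:ex2} for part (ii) -- by first determining, completely explicitly, the set $\M$ of ergodic invariant probability measures on $\partial\R_+^n$ together with all the invasion rates $\lambda_i(\mu)$; the food-chain structure of \eqref{e:stoc} makes this possible. The key preliminary is a purely linear-algebraic statement about the tridiagonal systems \eqref{e:systemm}, which I would prove by induction on $j$ by solving \eqref{e:systemm} from the last equation upward and tracking signs: if $\I_j\le 0$ then $x^{(j)}_j\le 0$, and hence $\I_{j+1}<0$; equivalently, $\I_{j+1}>0$ forces $\tilde a_{10}>0$ and $\I_2,\dots,\I_j>0$ (the ``domino''). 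Granting this, identify $\M$ as follows. For $\mu\in\M$ the complement $I_\mu^c$ is upward closed: if $x_k=0$ $\mu$-a.s.\ then on the invariant face $\{x_k=0\}$ one has $f_{k+1}(\bx)\le -a_{k+1,0}$, so the ergodic theorem gives $\lambda_{k+1}(\mu)\le -\tilde a_{k+1,0}<0$, which by Lemma \ref{l:int} forces $k+1\notin I_\mu$. Thus $I_\mu=\{1,\dots,j\}$ for some $0\le j\le n-1$, so $\mu$ is, inside the $j$-species subsystem, an invariant probability measure on $\R_+^{(j),\circ}$. An induction on the number of species -- applying Theorem \ref{t:pers} to the $\ell$-species subsystem, whose Assumption \ref{a.coexn} is checked by the weight argument given below for part (i) -- shows that whenever $\I_\ell>0$ the $\ell$-species subsystem is strongly stochastically persistent with unique interior invariant measure $\pi^{(\ell)}$, and that its ergodic boundary measures are precisely $\bdelta^*,\pi^{(1)},\dots,\pi^{(\ell-1)}$ (all existing, by the domino). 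Finally, since $g_i\equiv 1$ and $x_k=0$ $\pi^{(j)}$-a.s.\ for $k>j$, Lemma \ref{l:int} applied to $i=1,\dots,j$ shows the means $\int x_k\,\pi^{(j)}(d\bx)$ solve \eqref{e:systemm}, hence equal $x^{(j)}_k$; therefore, with \eqref{e:inv_j+1}, $\lambda_i(\pi^{(j)})=0$ for $i\le j$, $\lambda_{j+1}(\pi^{(j)})=-\tilde a_{j+1,0}+a_{j+1,j}x^{(j)}_j=\I_{j+1}$, and $\lambda_i(\pi^{(j)})=-\tilde a_{i0}<0$ for $i>j+1$.

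\textit{Part (i).} Assume $\I_n>0$. By the domino, $\tilde a_{10}>0$ and $\I_2,\dots,\I_n>0$, so $\M=\{\bdelta^*,\pi^{(1)},\dots,\pi^{(n-1)}\}$. Parts (1) and (2) of Assumption \ref{a.nonde} are immediate ($g_i\equiv1$, $\Sigma$ positive definite, $f_i$ affine), and part (3), i.e.\ \eqref{a.tight}, holds with the telescoping weights $c_1=1$, $c_k=c_{k-1}a_{k-1,k}/a_{k,k-1}>0$: with these the predation cross-terms $x_{k-1}x_k$ in $\sum_i c_i x_i f_i(\bx)$ cancel, leaving $\sum_i c_i\varepsilon_i a_{i0}x_i-\sum_i c_i a_{ii}x_i^2$ (where $\varepsilon_1=1$ and $\varepsilon_i=-1$ for $i\ge 2$), which behaves like $-c\|\bx\|^2$ and dominates the remaining, at most linearly growing, terms once $\gamma_b$ is small. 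For Assumption \ref{a.coexn}, since $\mu\mapsto\lambda_i(\mu)$ is linear it suffices to produce weights $p_1,\dots,p_n>0$ with $\sum_i p_i\lambda_i(\pi^{(j)})>0$ for $j=0,\dots,n-1$; as $\sum_i p_i\lambda_i(\pi^{(j)})=p_{j+1}\I_{j+1}-\sum_{i>j+1}p_i\tilde a_{i0}$ and every $\I_{j+1}>0$, take $p_n=1$ and choose $p_{n-1},p_{n-2},\dots,p_1$ successively large enough, from the top down. Theorem \ref{t:pers} now gives part (i).

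\textit{Part (ii).} Assume $\I_{j^*}>0>\I_{j^*+1}$ with $j^*<n$. By the domino, $\I_2,\dots,\I_{j^*}>0$, so $\bdelta^*,\pi^{(1)},\dots,\pi^{(j^*)}$ exist; applying Theorem \ref{t:ex} to each $j$-species subsystem with $j>j^*$, using $\pi^{(j^*)}$ -- for which \eqref{ae3.1} holds (the relevant invasion rates are $\I_{j^*+1}<0$ and $-\tilde a_{i0}<0$) and \eqref{ae3.2} is Assumption \ref{a.coexn} for the $j^*$-species subsystem -- shows no $\pi^{(j)}$ with $j>j^*$ exists, whence $\M=\{\bdelta^*,\pi^{(1)},\dots,\pi^{(j^*)}\}$. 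From the first paragraph, $\pi^{(j^*)}$ satisfies Assumption \ref{a.extn} (condition \eqref{ae3.1} reads $\max_{j^*<i\le n}\lambda_i(\pi^{(j^*)})=\I_{j^*+1}<0$, and \eqref{ae3.2} is again Assumption \ref{a.coexn} for the $j^*$-species subsystem), whereas every other $\mu\in\M$ fails \eqref{ae3.1} (there $\lambda_{j+1}(\pi^{(j)})=\I_{j+1}>0$ for $j<j^*$, and $\lambda_1(\bdelta^*)=\tilde a_{10}>0$); hence $\M^1=\{\pi^{(j^*)}\}$ and $\M^2=\M\setminus\M^1$. Assumption \ref{a.extn3} holds because $\max_{1\le i\le n}\lambda_i(\nu)\ge\max_{1\le i\le j^*}\lambda_i(\nu)>0$ for $\nu\in\Conv(\M^2)$, by Assumption \ref{a.coexn} for the $j^*$-species subsystem, and Assumption \ref{a.extn2} holds with any sufficiently small $\delta_1>0$, since $\sum_i g_i^2\equiv n$ while the denominator there grows linearly in $\|\bx\|$. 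Theorem \ref{t:ex2} then gives $P^{\pi^{(j^*)}}_\bx=1$: almost surely $\wtd\Pi_t$ converges weakly to $\pi^{(j^*)}$ and $\frac1t\ln X_i(t)\to\lambda_i(\pi^{(j^*)})<0$ for $i>j^*$, so $X_{j^*+1},\dots,X_n$ become extinct exponentially fast (at rate $\I_{j^*+1}$ for $i=j^*+1$ and $-\tilde a_{i0}$ for $i>j^*+1$), and, $\pi^{(j^*)}$ being carried by the $j^*$-face, the occupation measure of $(X_1,\dots,X_{j^*})$ converges weakly to $\pi^{(j^*)}$ on $\R_+^{(j^*),\circ}$. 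This is part (ii).

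\textit{Main obstacle.} Everything after the first paragraph is a routine verification of the hypotheses of Theorems \ref{t:pers}, \ref{t:ex} and \ref{t:ex2}. The delicate part is the first paragraph -- specifically the ``domino'' implication $\I_{j+1}>0\Rightarrow\I_2,\dots,\I_j>0$, which rests on a careful sign analysis of the tridiagonal systems \eqref{e:systemm} solved from the bottom up, and the self-referential induction on the number of species that keeps the description of $\M$ consistent (persistence of the $\ell$-species subsystem is needed to describe the boundary measures of the $(\ell+1)$-species subsystem, which in turn feed the verification of Assumption \ref{a.coexn} at level $\ell+1$).
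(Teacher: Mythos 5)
Your proposal is sound; the paper itself does not supply a proof of Theorem~\ref{t:mainn}, deferring instead to \cite{HN17, HN17b}, so there is no in-paper proof to compare against directly. Your derivation from Theorems~\ref{t:pers}, \ref{t:ex} and \ref{t:ex2} is the natural route, and it is essentially the same strategy used in the paper's other examples (identify $\M$ explicitly, compute invasion rates via Lemma~\ref{l:int}, check the relevant assumptions). Everything you flag as ``routine verification'' is indeed routine, and your ``main obstacle'' correctly isolates the genuinely non-trivial content: the sign analysis of the tridiagonal systems \eqref{e:systemm} (your ``domino'') and the nested induction on the number of species.

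Two remarks worth recording. First, the key sign fact you need is that $x^{(j)}_j$ has the same sign as $\I_j$; this follows by treating $x_j$ as a parameter $t$ in the first $j-1$ equations of the $j$-system, noting that $\partial x_{j-1}/\partial t<0$ (the derivatives $y_k=\partial x_k/\partial t$ alternate in sign, with the last equation $a_{j-1,j-2}y_{j-2}-a_{j-1,j-1}y_{j-1}=a_{j-1,j}>0$ fixing the overall sign so that $y_{j-1}<0$), and then reading off the unique crossing in the $j$-th equation. This makes your ``domino'' completely rigorous. Second, once this sign fact and Lemma~\ref{l:int} are in place, the nonexistence of $\pi^{(j)}$ for $j>j^*$ in part~(ii) also follows by a short contradiction that avoids Theorem~\ref{t:ex}: if $\pi^{(j)}$ existed, its coordinate means would be the positive vector $(x^{(j)}_1,\dots,x^{(j)}_j)$, so $x^{(j)}_j>0$, hence $\I_j>0$, hence (domino) $\I_{j^*+1}>0$, contradicting the hypothesis. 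Either route is correct.

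Finally, note that the extinction rates you obtain, $\lambda_k(\pi^{(j^*)})=\I_{j^*+1}$ for $k=j^*+1$ and $\lambda_k(\pi^{(j^*)})=-\tilde a_{k0}$ for $k>j^*+1$, are the correct ones delivered by Theorem~\ref{t:ex2}; the statement of Theorem~\ref{t:mainn} as printed (with limit $\tilde a_{k0}>0$) contains a sign/typo error, since extinction requires a negative long-run growth rate. Your proof therefore proves the corrected statement.
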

For more details regarding this model, as well as results when the noise is degenerate we refer the reader to \cite{HN17, HN17b}.

\end{example}

\section{Invariant measures, Lyapunov exponents and log-Laplace transforms}\label{s:prep}
In this section we explore some of the properties of the SDE \eqref{e:system}. These will be used in later sections in order to prove the main results. In view of \eqref{a.tight},
there is an $M>0$
such that
\begin{equation}\label{e2.2}
\left[\dfrac{\sum_i c_ix_if_i(\bx)}{1+\bc^\top\bx}-\dfrac12\dfrac{\sum_{i,j} \sigma_{ij}c_ic_jx_ix_jg_i(\bx)g_j(\bx)}{(1+\bc^\top\bx)^2}+\gamma_b\left(1+\sum_{i} (|f_i(\bx)|+g_i^2(\bx))\right)\right]<0
\end{equation}
if $\|\bx\|\geq M$.
Since
\[
|g_i(\bx)g_j(\bx)\sigma_{ij}|\leq 2|\sigma_{ij}|(|g_i(\bx)|^2+|g_j(\bx)|^2)
\]
we can find $\delta_0\in\left(0,\frac{\gamma_b}{2}\right)$ such that
\begin{equation}\label{e2.3}
3\delta_0\sum_{i,j} |g_i(\bx)g_j(\bx)\sigma_{ij}|+\delta_0\sum_ig_i^2(\bx)\leq \gamma_b\sum_ig_i^2(\bx)\,,\,\bx\in\R^n_+.
\end{equation}
By \eqref{e2.2} and \eqref{e2.3}, we have
\begin{equation}\label{e2.1}
\begin{aligned}
\dfrac{\sum_ic_ix_if_i(\bx)}{1+\bc^\top\bx}&-\dfrac{1}2\dfrac{\sum_{i,j} \sigma_{ij}c_ic_jx_ix_jg_i(\bx)g_j(\bx)}{(1+\bc^\top\bx)^2}+\gamma_b+\delta_0\sum_i(2|f_i(\bx)|+g^2_i(\bx))\\
&+3\delta_0\sum_{i,j} |g_i(\bx)g_j(\bx)\sigma_{ij}|<0\,\text{ for all }\, \|\bx\|\geq M.
\end{aligned}
\end{equation}
For $\bp=(p_1,\cdots,p_n)\in\R^{n,\circ}_+$, $\|\bp\|\leq\delta_0$,
define the function $V: \R^{n,\circ}_+\to\R_+$ by
\begin{equation}\label{e:V}
V(\bx):=\dfrac{1+\bc^\top\bx}{\prod_i x_i^{p_i}}.
\end{equation}
Using \eqref{e2.1} one can define
\begin{equation}\label{e:H}
\begin{aligned}
H:=\sup\limits_{\bx\in\R^{n}_+}\Bigg\{&\dfrac{\sum_ic_ix_if_i(\bx)}{1+\bc^\top\bx}-\dfrac{1}2\dfrac{\sum_{i,j} \sigma_{ij}c_ic_jx_ix_jg_i(\bx)g_j(\bx)}{(1+\bc^\top\bx)^2}\\
&+\gamma_b+\delta_0\sum_i(2|f_i(\bx)|+g_i^2(\bx))
+3\delta_0\sum_{i,j} |g_i(\bx)g_j(\bx)\sigma_{ij}|\Bigg\}<\infty.
\end{aligned}
\end{equation}
\begin{lm}\label{lm2.0}
For any $\bx\in\R^n_+$, there exists a pathwise unique strong solution $(\BX(t))$ to \eqref{e:system}
with initial value $\BX(0)=\bx$.
Let $I\subset\{1,\cdots,n\}$ and $\bx\in\R^{I,\circ}_+$
where $$\R^{I,\circ}_+=\left\{\bx\in \R^n_+: x_i=0 \text{ if } i\notin I \text{ and }x_i>0 \text{ if } i\in I\right\}.$$
The solution $(\BX(t))$ with initial value $\bx$ will stay forever in $\R^{I,\circ}_+$ with probability 1.
Moreover, for $\bx\in\R^{n,\circ}_+$ and $V$ defined by \eqref{e:V}, we have
\begin{equation}\label{e1-lm2.2}
\E_\bx V^{\delta_0}(\BX(t))\leq \exp(\delta_0Ht) V^{\delta_0}(\bx).
\end{equation}
\end{lm}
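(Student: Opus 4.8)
The plan is to treat the three assertions in turn. For the existence and pathwise uniqueness of a strong solution, note that by part (2) of Assumption \ref{a.nonde} the coefficients $x_if_i(\bx)$ and $x_ig_i(\bx)$ are locally Lipschitz on $\R^n_+$, so standard SDE theory (e.g. truncation of the coefficients outside a ball, plus a localization argument) gives a unique strong solution up to an explosion time $\tau_\infty$. To rule out explosion in finite time, I would apply It\^o's formula to $V^{\delta_0}$, where $V$ is the function in \eqref{e:V}; the point is that \eqref{e:H} is precisely the quantity obtained after computing $\Lom V^{\delta_0}$ and dividing by $V^{\delta_0}$, so that $\Lom V^{\delta_0}(\bx) \le \delta_0 H\, V^{\delta_0}(\bx)$ on $\R^{n,\circ}_+$. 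Since $V^{\delta_0}(\bx)\to\infty$ as $\bx$ approaches $\partial\R^{n,\circ}_+$ or as $\|\bx\|\to\infty$ (here one uses $\|\bp\|\le\delta_0$ small and $\bc\in\R^{n,\circ}_+$), a Lyapunov/Khasminskii argument shows the solution started in $\R^{n,\circ}_+$ never explodes and never hits the boundary — giving simultaneously the non-explosion and the invariance of $\R^{n,\circ}_+$.

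For the invariance of a general face $\R^{I,\circ}_+$, the key structural observation is that in \eqref{e:system} each equation has the multiplicative form $dX_i = X_i(\cdots)$, so if $X_i(0)=0$ then $X_i(t)\equiv 0$ solves the $i$-th equation; by pathwise uniqueness the coordinates with $i\notin I$ stay at $0$ forever. The remaining coordinates $(X_i)_{i\in I}$ then satisfy the lower-dimensional Kolmogorov system obtained by restricting to $\R^{I,\circ}_+$, whose coefficients are again locally Lipschitz and of multiplicative form; applying the one-dimensional-face version of the non-explosion/non-extinction argument above (with an analogue of $V$ built from $\prod_{i\in I}x_i^{p_i}$, which still satisfies the relevant Lyapunov inequality because \eqref{e2.1} was derived with the full vector $\bc$ but the estimate localizes to the face) shows $(X_i(t))_{i\in I}$ stays in $(0,\infty)^{I}$ for all time.

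For the final estimate \eqref{e1-lm2.2}, I would make the Lyapunov inequality rigorous via a stopping-time argument. Let $\tau_k$ be the first exit time of $\BX$ from the set $\{k^{-1}< x_i < k,\ \|\bx\|<k\}$. On $[0,t\wedge\tau_k]$, It\^o's formula together with $\Lom V^{\delta_0}\le \delta_0 H V^{\delta_0}$ and the product rule gives that $e^{-\delta_0 H s}V^{\delta_0}(\BX(s))$ is a local supermartingale, hence (being nonnegative up to $\tau_k$) a genuine supermartingale on $[0,t\wedge\tau_k]$, so $\E_\bx\big[e^{-\delta_0 H(t\wedge\tau_k)}V^{\delta_0}(\BX(t\wedge\tau_k))\big]\le V^{\delta_0}(\bx)$. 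Letting $k\to\infty$, using $\tau_k\uparrow\infty$ a.s. (from the non-explosion already established) and Fatou's lemma yields $\E_\bx\big[e^{-\delta_0 H t}V^{\delta_0}(\BX(t))\big]\le V^{\delta_0}(\bx)$, which is \eqref{e1-lm2.2}. The main obstacle is verifying the Lyapunov inequality $\Lom V^{\delta_0}\le \delta_0 H V^{\delta_0}$ cleanly: one has to carry out the second-order It\^o computation for $V^{\delta_0}=(1+\bc^\top\bx)^{\delta_0}\prod_i x_i^{-\delta_0 p_i}$, collect the first-order terms into $\tfrac{\sum_i c_ix_if_i(\bx)}{1+\bc^\top\bx}$ and the $-\tfrac{\delta_0 p_i}{x_i}\cdot x_if_i$ contributions, and bound the quadratic-variation terms (including the cross terms $\sigma_{ij}$ and the $p_ip_j$ terms) by the expression appearing inside the supremum defining $H$ in \eqref{e:H}; the choice of $\delta_0$ in \eqref{e2.3} and the small-$\|\bp\|$ hypothesis are exactly what make all the error terms absorbable.
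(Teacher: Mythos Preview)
Your proposal is correct and follows essentially the same route as the paper: both establish the Lyapunov inequality $\Lom V^{\delta_0}\le \delta_0 H\,V^{\delta_0}$ on $\R^{n,\circ}_+$ by an explicit It\^o computation (using $\|\bp\|\le\delta_0$ and \eqref{e2.1}--\eqref{e2.3} to absorb the cross terms), combine it with the blow-up of $V$ near $\partial\R^{n,\circ}_+$ and at infinity, and invoke the Khasminskii non-explosion criterion, with the face invariance handled by the multiplicative structure of \eqref{e:system}. The paper simply cites \cite[Theorem~3.5]{RK} for both non-explosion and the moment bound \eqref{e1-lm2.2}, whereas you spell out the supermartingale/Fatou argument for the latter --- but the content is the same.
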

\begin{lm}\label{lm2.2}
There are $H_1, H_2>0$ such that for any $\bx\in\R^n_+, t>0$

\begin{equation}\label{e2-lm2.2}
\E_\bx (1+\bc^\top\BX(t))^{\delta_0}\leq H_1+(1+\bc^\top\bx)^{\delta_0}e^{-\delta_0\gamma_bt}
\end{equation}

and
\begin{equation}\label{e3-lm2.2}
\E_\bx\int_0^t  (1+\bc^\top\BX(s))^{\delta_0}\left[1+\sum_i(|f_i(\BX(s)|+|g_i(\BX(s)|^2)\right]ds\leq H_2((1+\bc^\top\bx)^{\delta_0}+t).
\end{equation}
Moreover,  the solution process $(\BX(t))$ is a Feller process on $\R^n_+$.
\end{lm}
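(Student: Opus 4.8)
The plan is to derive the moment bound \eqref{e2-lm2.2} by applying It\^o's formula to the process $(1+\bc^\top\BX(t))^{\delta_0}$ and extracting a differential inequality of the form $\frac{d}{dt}\E_\bx(1+\bc^\top\BX(t))^{\delta_0}\leq -\delta_0\gamma_b\,\E_\bx(1+\bc^\top\BX(t))^{\delta_0}+C$ for some constant $C>0$. First I would compute $\Lom(1+\bc^\top\bx)^{\delta_0}$ directly from the generator $\Lom$: this produces a first-order term $\delta_0(1+\bc^\top\bx)^{\delta_0-1}\sum_i c_ix_if_i(\bx)$ and a second-order term involving $\delta_0(\delta_0-1)(1+\bc^\top\bx)^{\delta_0-2}\sum_{i,j}\sigma_{ij}c_ic_jx_ix_jg_ig_j$. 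Factoring out $(1+\bc^\top\bx)^{\delta_0}$, the bracket that appears is exactly (up to the sign of the $\delta_0-1<0$ factor and absorbing the $\gamma_b$ and $\delta_0\sum(\cdot)$ slack) controlled by \eqref{e2.1} when $\|\bx\|\geq M$, which forces the bracket to be $\leq-\gamma_b$ there; on the compact region $\|\bx\|\leq M$ the bracket is bounded by a constant because of the local Lipschitz (hence local boundedness) of $f_i,g_i$ from Assumption \ref{a.nonde}(2). Combining the two regions gives $\Lom(1+\bc^\top\bx)^{\delta_0}\leq -\delta_0\gamma_b(1+\bc^\top\bx)^{\delta_0}+C_1$ for all $\bx\in\R^n_+$.

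The one technical subtlety is that It\^o's formula cannot be applied naively since a priori we only know a finite moment bound and must justify that the local martingale part has zero expectation; the standard remedy is to run the argument up to a localizing sequence of stopping times $\tau_R=\inf\{t:\|\BX(t)\|\geq R\}$, apply It\^o and Gronwall on the stopped process to get $\E_\bx(1+\bc^\top\BX(t\wedge\tau_R))^{\delta_0}\leq (1+\bc^\top\bx)^{\delta_0}e^{-\delta_0\gamma_b t}+\frac{C_1}{\delta_0\gamma_b}$, and then let $R\to\infty$ using Fatou's lemma. Non-explosion (so that $\tau_R\to\infty$) is guaranteed by Lemma \ref{lm2.0}, which already gives a global strong solution. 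This yields \eqref{e2-lm2.2} with $H_1:=C_1/(\delta_0\gamma_b)$.

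For \eqref{e3-lm2.2}, I would revisit the It\^o expansion but now retain the slack terms that \eqref{e2.1}/\eqref{e:H} were designed to keep: the precise bound is $\Lom(1+\bc^\top\bx)^{\delta_0}\leq (1+\bc^\top\bx)^{\delta_0}\bigl[-\gamma_b-\delta_0\sum_i(|f_i|+g_i^2)\bigr]\delta_0+C_1$, or rearranged, $\delta_0^2(1+\bc^\top\bx)^{\delta_0}\bigl[1+\sum_i(|f_i|+g_i^2)\bigr]\leq -\Lom(1+\bc^\top\bx)^{\delta_0}+C_1+\delta_0(1+\delta_0\gamma_b)(1+\bc^\top\bx)^{\delta_0}$ — that is, the very integrand we want to bound is dominated by $-\Lom$ of our test function plus a constant plus a multiple of $(1+\bc^\top\bx)^{\delta_0}$ whose time-integral is already controlled by \eqref{e2-lm2.2}. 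Integrating Dynkin's formula (again via the stopping-time localization) over $[0,t]$ then gives $\delta_0^2\,\E_\bx\int_0^t(1+\bc^\top\BX(s))^{\delta_0}[1+\sum_i(|f_i|+g_i^2)]\,ds\leq (1+\bc^\top\bx)^{\delta_0}+C_1 t+\delta_0(1+\delta_0\gamma_b)\int_0^t\bigl[(1+\bc^\top\bx)^{\delta_0}e^{-\delta_0\gamma_b s}+H_1\bigr]ds$, and bounding the last integral by a constant times $((1+\bc^\top\bx)^{\delta_0}+t)$ produces \eqref{e3-lm2.2} for a suitable $H_2$.

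Finally, the Feller property: I would show that for $\phi\in C_b(\R^n_+)$ the map $\bx\mapsto\E_\bx\phi(\BX(t))$ is continuous, which follows from continuous dependence of the solution on the initial condition. Pathwise uniqueness plus the local Lipschitz coefficients give, via the usual Gronwall-with-stopping-times estimate, that $\E\bigl[\sup_{s\leq t}\|\BX^\bx(s)-\BX^{\by}(s)\|^2\wedge 1\bigr]\to 0$ as $\by\to\bx$; combined with the moment bound \eqref{e2-lm2.2} (which prevents mass escaping to infinity) and dominated convergence this gives $\E_{\by}\phi(\BX(t))\to\E_\bx\phi(\BX(t))$. The main obstacle in the whole lemma is the bookkeeping in the It\^o expansion for \eqref{e3-lm2.2}: one must verify that the auxiliary constant $\delta_0$ and the sup $H$ in \eqref{e:H} were chosen precisely so that the unwanted $\sum_i(|f_i|+g_i^2)$ terms coming out of the first- and second-order It\^o terms get absorbed rather than needing to be controlled separately — everything else is routine localization and Gronwall.
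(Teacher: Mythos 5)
Your proposal is correct and follows essentially the same route as the paper: compute $\Lom V_\bc$ for $V_\bc(\bx)=(1+\bc^\top\bx)^{\delta_0}$, use \eqref{e2.1}/\eqref{e:H} to split the drift bound into a bounded term (supported on $\{\|\bx\|\leq M\}$) plus a negative term proportional to $V_\bc(\gamma_b+\delta_0\sum_i(|f_i|+g_i^2))$, then Dynkin with $\eta_k\wedge t$ localization, Gronwall/exponential weight, and Fatou. The one place where you take a slight detour is \eqref{e3-lm2.2}: you pass through \eqref{e2-lm2.2} to control the extra $V_\bc$ term you create when inserting the ``$1+$'' into the integrand, whereas the paper notes that $\delta_0<\gamma_b/2$ gives $\gamma_b+\delta_0\sum_i(\cdot)\geq\delta_0\bigl(1+\sum_i(\cdot)\bigr)$ directly, so the Dynkin inequality already delivers $\delta_0^2\E_\bx\int_0^{t}V_\bc(\BX(s))[1+\sum_i(|f_i|+|g_i|^2)]ds\leq V_\bc(\bx)+\delta_0\tilde H_1 t$ without any appeal to \eqref{e2-lm2.2}. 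Both give the stated bound; the paper's absorption is just a touch cleaner. For the Feller property the paper also goes through the localization $\PP_\bx\{\eta_k<t\}\to0$ uniformly on compacts and then invokes a truncation of the standard argument (citing \cite{MAO}), which is the same mechanism as your continuous-dependence-plus-tightness sketch.
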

\begin{rmk}
There are different possible definitions of ``Feller'' in the literature. What we mean by Feller is that the semigroup $(T_t)_{t\geq 0}$ of the process maps the set of bounded continuous functions $C_b(\R_+^n)$ into itself i.e.
\[
T_t(C_b(\R_+^n)) \subset C_b(\R_+^n), ~t\geq 0.
\]
\end{rmk}
Define the family of measures
$$\Pi^\bx_t(\cdot):=\dfrac1t\int_0^t\PP_\bx\{\BX(s)\in\cdot\}ds,\,\bx\in\R^n_+, t>0.$$
\begin{lm}\label{lm2.3}
Let $\mu$ be an invariant probability measure of $\BX$.
Then
$$\int_{\R^n_+}(1+\bc^\top\bx)^{\delta_0}\left(1+\sum_i (|f_i(\bx)|+|g_i(\bx)|^2)\right)\mu(d\bx)\leq H_2$$
and
$$\int_{\R^n_+}\left(\dfrac{\sum_i c_ix_if_i(\bx)}{1+\bc^\top\bx}-\dfrac{1}2\dfrac{\sum_{i,j} \sigma_{ij}c_ic_jx_ix_jg_i(\bx)g_j(\bx)}{(1+\bc^\top\bx)^2}\right)\mu(d\bx)=0.$$
\end{lm}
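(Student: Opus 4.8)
The statement of Lemma \ref{lm2.3} has two parts: an integrability bound for an invariant measure $\mu$, and the vanishing of the integral of the ``drift minus half the diffusion coefficient'' of $\log(1+\bc^\top\bx)$ against $\mu$. The natural strategy is to integrate the dynamics of the process against the stationary measure, using the two estimates already available from Lemma \ref{lm2.2}. First I would prove the integrability bound. The key observation is that \eqref{e3-lm2.2} controls the time-averaged expectation of $(1+\bc^\top\BX(s))^{\delta_0}(1+\sum_i(|f_i|+|g_i|^2))$ uniformly. Dividing \eqref{e3-lm2.2} by $t$ gives
\[
\frac1t\E_\bx\int_0^t (1+\bc^\top\BX(s))^{\delta_0}\Big(1+\sum_i(|f_i(\BX(s))|+|g_i(\BX(s))|^2)\Big)\,ds \leq \frac{H_2((1+\bc^\top\bx)^{\delta_0}+t)}{t},
\]
whose right-hand side tends to $H_2$ as $t\to\infty$. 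Since $\mu$ is invariant, $\Pi^\bx_t(\cdot)$ with $\bx$ chosen $\mu$-distributed (or by Fubini, integrating the above in $\mu(d\bx)$ and using $\int (1+\bc^\top\bx)^{\delta_0}\mu(d\bx)<\infty$, which itself follows from \eqref{e2-lm2.2}) is exactly $\mu$. So $\int_{\R^n_+}(1+\bc^\top\bx)^{\delta_0}(1+\sum_i(|f_i|+|g_i|^2))\,\mu(d\bx)\leq H_2$, after one applies Fatou's lemma to pass the liminf inside (the integrand being nonnegative). One small preliminary point: \eqref{e2-lm2.2} with $\bx$ integrated against $\mu$ and stationarity of $\mu$ forces $\int(1+\bc^\top\bx)^{\delta_0}\mu(d\bx)\leq H_1/(1-e^{-\delta_0\gamma_b t})$ for every $t$, hence finite; this legitimizes the Fubini step and the use of $\mu$ as the $\bx$-average.

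**The second identity.** For the vanishing of the second integral, I would apply It\^o's formula to $F(\bx)=\log(1+\bc^\top\bx)$. A direct computation gives
\[
\Lom F(\bx)=\frac{\sum_i c_ix_if_i(\bx)}{1+\bc^\top\bx}-\frac12\frac{\sum_{i,j}\sigma_{ij}c_ic_jx_ix_jg_i(\bx)g_j(\bx)}{(1+\bc^\top\bx)^2},
\]
which is precisely the integrand in question. The integrability bound just proved shows $\Lom F\in L^1(\mu)$: indeed $|\Lom F(\bx)|\le \sum_i c_i|f_i(\bx)| + \tfrac12\sum_{i,j}|\sigma_{ij}|c_ic_j|g_i(\bx)||g_j(\bx)|$ since $x_i\le \bc^\top\bx/c_i\le (1+\bc^\top\bx)/c_i$ and the same for the quadratic term over $(1+\bc^\top\bx)^2$, so $|\Lom F|$ is dominated by a constant times $1+\sum_i(|f_i|+|g_i|^2)$, which is $\mu$-integrable by part one (even without the $(1+\bc^\top\bx)^{\delta_0}$ weight, which only helps). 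Then the general principle that $\int \Lom F\,d\mu=0$ for an invariant measure $\mu$ and a function $F$ with $\Lom F\in L^1(\mu)$ gives the claim. Concretely I would run the argument by hand: write $F(\BX(t))-F(\BX(0))=\int_0^t\Lom F(\BX(s))\,ds+M_t$ where $M_t=\int_0^t\frac{\bc^\top(g_i(\BX(s))X_i(s))_i}{1+\bc^\top\BX(s)}\,dE_i(s)$ is a local martingale; take expectations under $\PP_\mu$ (the law with initial distribution $\mu$), so that $\E_\mu F(\BX(t))=\E_\mu F(\BX(0))$ by stationarity, provided $F\in L^1(\mu)$ — which holds since $0\le \log(1+\bc^\top\bx)\le \text{const}\cdot(1+\bc^\top\bx)^{\delta_0}/\delta_0$ and the latter is $\mu$-integrable. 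One must check the martingale $M_t$ has zero expectation; this follows from a localization and dominated convergence argument using the $L^1(\mu)$-integrability of the quadratic variation density, which is again controlled by $\sum_i g_i^2$, $\mu$-integrable by part one. Hence $\E_\mu\int_0^t\Lom F(\BX(s))\,ds=0$, and by stationarity the integrand has constant expectation $\int\Lom F\,d\mu$, forcing $\int\Lom F\,d\mu=0$.

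**Main obstacle.** The routine parts are the It\^o computation and the domination estimates. The genuinely delicate step is the justification that the local martingale term in the It\^o expansion has expectation zero under the stationary law — i.e. that it is a true martingale, not merely a local one. This is where one needs the integrability from part one in an essential way: localizing with stopping times $\tau_k=\inf\{t:\|\BX(t)\|\ge k\}$, one gets $\E_\mu[F(\BX(t\wedge\tau_k))-F(\BX(0))]=\E_\mu\int_0^{t\wedge\tau_k}\Lom F\,ds$, and one must pass $k\to\infty$ on both sides. On the right this is dominated convergence using $|\Lom F|\in L^1(\mu)$ and stationarity (so $\E_\mu\int_0^t|\Lom F(\BX(s))|ds = t\int|\Lom F|d\mu<\infty$); on the left one needs $F(\BX(t\wedge\tau_k))\to F(\BX(t))$ in $L^1(\PP_\mu)$, which follows from uniform integrability of $\{F(\BX(t\wedge\tau_k))\}_k$, and that in turn from the moment bound $\E_\mu F(\BX(t\wedge\tau_k))^{1+\eta}\le \text{const}$ obtainable from \eqref{e1-lm2.2}/\eqref{e2-lm2.2} (taking the exponent just below the critical $\delta_0$). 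Once this uniform integrability is in hand, everything closes. I would present this localization carefully since it is the only point where something could go wrong, and relegate the elementary inequalities $x_i\le(1+\bc^\top\bx)/c_i$ and the $\log$-versus-power bound to one-line remarks.
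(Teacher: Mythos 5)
Your proof of the integrability bound follows the same route as the paper: both pass from the time--averaged estimate \eqref{e3-lm2.2} to the stationary average by Fatou after truncating the integrand, and your preliminary finiteness check $\int (1+\bc^\top\bx)^{\delta_0}\mu(d\bx)<\infty$ via \eqref{e2-lm2.2} plays the same bookkeeping role as the paper's $k\wedge\phi$ device. The second identity is where you genuinely diverge. You establish $\int \Lom F\,d\mu=0$ for $F(\bx)=\log(1+\bc^\top\bx)$ by the ``stationary measure annihilates the generator'' argument: It\^o, take $\E_\mu$, localize by $\tau_k$, remove the localization by dominated convergence on the Lebesgue term (using $|\Lom F|\in L^1(\mu)$ from part one) and by uniform integrability of $F(\BX(t\wedge\tau_k))$ (using $\log\le\mathrm{const}\cdot(\,\cdot\,)^{\alpha}$ for $\alpha<\delta_0$ and the stopped estimate implicit in the proof of Lemma \ref{lm2.2}). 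This correctly closes the one delicate step, which is exactly the verification that the stochastic integral contributes zero expectation. The paper instead argues pathwise: it reduces to $\mu$ ergodic, uses the strong law of large numbers for local martingales to kill the martingale term at rate $o(t)$ (this needs only a growth bound on the quadratic variation, not true-martingale status), invokes the ergodic theorem to identify $\lim_t t^{-1}\log(1+\bc^\top\BX(t))$ with $\int\Lom F\,d\mu$ $\PP_\mu$-a.s., and then rules out a nonzero limit by a dynamical contradiction (negative is impossible since $F\ge0$; positive forces $\|\BX(t)\|\to\infty$, incompatible with stationarity). The trade-off is: your route is more ``abstract ergodic theory free'' and avoids the ergodic decomposition, but must pay with the UI/martingale verification; the paper's route sidesteps that verification entirely via the SLLN for local martingales, at the price of invoking ergodicity and a pathwise limit argument. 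Both are correct.
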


\begin{lm}\label{lm2.4}
Suppose the following
\begin{itemize}
\item  The sequences $(\bx_k)_{k\in N}\subset \R_+^n, (T_k)_{k\in \N}\subset \R_+$ are such that $\|\bx_k\|\leq M$, $T_k>1$ for all $k\in \N$ and $\lim_{k\to\infty}T_k=\infty$.

\item The sequence $(\Pi^{\bx_k}_{T_k})_{k\in \N}$ converges weakly to an invariant probability measure
$\pi$.

\item The function $h:\R^n_+\to\R$ is any continuous function satisfying
$|h(\bx)|<K_h(1+\bc^\top\bx)^\delta(1+\sum_i(|f_i(\bx)|+|g_i(\bx)|^2))$, $\bx\in \R^n_+$,
for some $K_h\geq 0$, $\delta<\delta_0$.
\end{itemize}
Then one has
\[\lim_{k\to\infty}\int_{\R^n_+}h(\bx)\Pi^{\bx_k}_{T_k}(d\bx)= \int_{\R^n_+}h(\bx)\pi(d\bx).\]
\end{lm}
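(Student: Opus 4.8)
The plan is to combine the tightness-type moment bounds from Lemma \ref{lm2.2} and Lemma \ref{lm2.3} with a truncation argument. First I would reduce to the case $h\ge 0$ by splitting $h=h^+-h^-$. The difficulty is that $h$ is not bounded, so weak convergence of $\Pi^{\bx_k}_{T_k}\to\pi$ does not immediately give convergence of the integrals; the standard tool is to show that the family of measures $h^{\pm}(\bx)\,\Pi^{\bx_k}_{T_k}(d\bx)$ has no escape of mass to infinity, i.e. uniform integrability of $h$ against $\Pi^{\bx_k}_{T_k}$. For a cutoff level $R>0$ pick a continuous function $\chi_R:\R^n_+\to[0,1]$ with $\chi_R(\bx)=1$ for $\|\bx\|\le R$ and $\chi_R(\bx)=0$ for $\|\bx\|\ge R+1$. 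Then write
\[
\int h\,\Pi^{\bx_k}_{T_k}(d\bx)=\int h\chi_R\,\Pi^{\bx_k}_{T_k}(d\bx)+\int h(1-\chi_R)\,\Pi^{\bx_k}_{T_k}(d\bx).
\]
The function $h\chi_R$ is bounded and continuous, so by the weak convergence hypothesis the first term converges to $\int h\chi_R\,\pi(d\bx)$ as $k\to\infty$.

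The heart of the matter is to bound the tail term $\int |h|(1-\chi_R)\,\Pi^{\bx_k}_{T_k}(d\bx)$ uniformly in $k$, with a bound that tends to $0$ as $R\to\infty$. Using the growth assumption on $h$ and the elementary inequality $(1+\bc^\top\bx)^{\delta}\le (1+\bc^\top\bx)^{\delta_0}(1+\bc^\top\bx)^{\delta-\delta_0}$, together with $1-\chi_R(\bx)\le \1_{\{\|\bx\|\ge R\}}$ and the fact that $(1+\bc^\top\bx)^{\delta-\delta_0}\le \varepsilon_R:=(1+\underline c\,R)^{\delta-\delta_0}\to 0$ on $\{\|\bx\|\ge R\}$ (where $\underline c=\min_i c_i>0$ and $\delta-\delta_0<0$), we get
\[
\int |h|(1-\chi_R)\,\Pi^{\bx_k}_{T_k}(d\bx)
\le K_h\,\varepsilon_R\int_{\R^n_+}(1+\bc^\top\bx)^{\delta_0}\Big(1+\sum_i(|f_i(\bx)|+|g_i(\bx)|^2)\Big)\,\Pi^{\bx_k}_{T_k}(d\bx).
\]
By definition of $\Pi^{\bx_k}_{T_k}$ and \eqref{e3-lm2.2} of Lemma \ref{lm2.2}, the remaining integral equals $\frac1{T_k}\E_{\bx_k}\int_0^{T_k}(1+\bc^\top\BX(s))^{\delta_0}(1+\sum_i(|f_i(\BX(s))|+|g_i(\BX(s))|^2))\,ds\le H_2\big((1+\bc^\top\bx_k)^{\delta_0}/T_k+1\big)$, which is bounded by $H_2\big((1+\underline{\underline c}\,M)^{\delta_0}+1\big)=:C_M$ uniformly in $k$ since $\|\bx_k\|\le M$ and $T_k>1$ (here $\underline{\underline c}=\max_i c_i$). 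Hence $\int |h|(1-\chi_R)\,\Pi^{\bx_k}_{T_k}(d\bx)\le K_h C_M\,\varepsilon_R$ for all $k$, and the same estimate with $\pi$ in place of $\Pi^{\bx_k}_{T_k}$ holds by Lemma \ref{lm2.3}. (The same computation, applied to $\chi_R h$ in the weak limit, also shows $h$ is $\pi$-integrable, so $\int h\,\pi(d\bx)$ makes sense.)

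Putting the pieces together, for every $R$,
\[
\limsup_{k\to\infty}\left|\int h\,\Pi^{\bx_k}_{T_k}(d\bx)-\int h\,\pi(d\bx)\right|
\le \Big|\int h\chi_R\,\pi-\int h\,\pi\Big|+2K_hC_M\varepsilon_R\le 3K_hC_M\varepsilon_R,
\]
where the first term was estimated by the tail bound for $\pi$. Letting $R\to\infty$ gives $\varepsilon_R\to 0$, and the lemma follows. The step I expect to be the main (though still routine) obstacle is making the uniform tail estimate clean: one must be careful that the constant $C_M$ coming from \eqref{e3-lm2.2} genuinely depends only on $M$ and not on $k$ (this uses $\|\bx_k\|\le M$ and $T_k>1$), and that the exponent gap $\delta<\delta_0$ is exactly what provides the decaying factor $\varepsilon_R$; if $h$ grew at the critical rate $\delta=\delta_0$ the argument would fail, which is why the hypothesis is stated with strict inequality.
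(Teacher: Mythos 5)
Your proposal is correct and follows essentially the same route as the paper: truncate $h$ by a continuous compactly supported cutoff, apply weak convergence to the bounded piece, and control the tail uniformly in $k$ by exploiting the exponent gap $\delta<\delta_0$ together with the moment bound \eqref{e3-lm2.2} from Lemma \ref{lm2.2} (using $\|\bx_k\|\leq M$ and $T_k>1$), with the analogous tail bound for $\pi$ supplied by Lemma \ref{lm2.3}. Minor cosmetic differences only: you phrase the cutoff level as $R\to\infty$ where the paper fixes an $\eps$ and a threshold $l_\eps$, and you write the constants a bit more explicitly; you also correctly cite \eqref{e3-lm2.2} where the paper's text has a small typo pointing to \eqref{e2-lm2.2}.
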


\begin{lm}\label{lm2.5}
Let $Y$ be a random variable, $\theta_0>0$ a constant, and suppose $$\E \exp(\theta_0 Y)+\E \exp(-\theta_0 Y)\leq K_1.$$
Then the log-Laplace transform
$\phi(\theta)=\ln\E\exp(\theta Y)$
is twice differentiable on $\left[0,\frac{\theta_0}2\right)$ and
$$\dfrac{d\phi}{d\theta}(0)= \E Y,$$
$$0\leq \dfrac{d^2\phi}{d\theta^2}(\theta)\leq K_2\,, \theta\in\left[0,\frac{\theta_0}2\right)$$
 for some $K_2>0$ depending only on $K_1$.
\end{lm}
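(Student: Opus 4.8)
The plan is to reduce everything to standard facts about the moment generating function $m(\theta):=\E\exp(\theta Y)$ and then transfer them to $\phi=\ln m$.

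First I would record a uniform domination estimate. For every integer $k\ge 0$ and every $\theta\in[-\theta_0/2,\theta_0/2]$ one has pointwise
$$|Y|^k e^{\theta Y}\le C_k\big(e^{\theta_0 Y}+e^{-\theta_0 Y}\big),\qquad C_k:=\sup_{t\ge 0}t^k e^{-\theta_0 t/2}<\infty ,$$
since $\theta Y\le \tfrac{\theta_0}{2}|Y|$, so $e^{\theta Y}\le e^{-\theta_0|Y|/2}\cdot e^{\theta_0|Y|}$, and $e^{\theta_0|Y|}\le e^{\theta_0 Y}+e^{-\theta_0 Y}$. Taking expectations and using the hypothesis gives $\E\big[|Y|^k e^{\theta Y}\big]\le C_k K_1$ uniformly in $\theta\in[-\theta_0/2,\theta_0/2]$; in particular $\E|Y|\le C_1 K_1<\infty$ and $0<m(\theta)\le K_1<\infty$ there (positivity because $e^{\theta Y}>0$ a.s.).

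Next, since the dominating random variable $e^{\theta_0 Y}+e^{-\theta_0 Y}$ is integrable and the bounds for $k=1,2$ hold uniformly on $[-\theta_0/2,\theta_0/2]$, differentiation under the expectation is justified, so $m$ is $C^\infty$ on $(-\theta_0/2,\theta_0/2)$ with $m'(\theta)=\E[Y e^{\theta Y}]$ and $m''(\theta)=\E[Y^2 e^{\theta Y}]$. As $m>0$ on this interval, $\phi=\ln m$ is $C^\infty$ there, hence twice differentiable on $[0,\theta_0/2)$. From $\phi'=m'/m$ together with $m(0)=1$ and $m'(0)=\E Y$, I get $\phi'(0)=\E Y$. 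For the sign of the second derivative I would use $\phi''=\dfrac{m''m-(m')^2}{m^2}$ and Cauchy--Schwarz applied to $Y e^{\theta Y/2}$ and $e^{\theta Y/2}$, which yields $(m'(\theta))^2\le m''(\theta)m(\theta)$ and hence $\phi''\ge 0$ (equivalently, $\phi''(\theta)$ is the variance of $Y$ under the exponentially tilted law $d\PP_\theta\propto e^{\theta Y}\,d\PP$).

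It remains to bound $\phi''$ from above. I would simply estimate $\phi''(\theta)\le m''(\theta)/m(\theta)$: the numerator is $\le C_2 K_1$ by the $k=2$ domination, while Jensen's inequality gives $m(\theta)\ge e^{\theta \E Y}\ge e^{-(\theta_0/2)C_1 K_1}$ for $\theta\in[0,\theta_0/2)$. Hence $\phi''(\theta)\le C_2 K_1\exp\!\big((\theta_0/2)C_1K_1\big)=:K_2$, which depends only on $K_1$ once the constant $\theta_0$ is regarded as fixed. I do not expect a real obstacle: the only points requiring care are the uniform domination that legitimizes differentiating under the expectation and the Jensen lower bound on $m(\theta)$ used in the last estimate; the rest is the textbook fact that a log-Laplace transform is smooth and convex on the interior of its domain.
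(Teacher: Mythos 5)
Your proposal is correct and follows essentially the same route as the paper: the same pointwise domination $|y|^k e^{\theta y}\le C_k(e^{\theta_0 y}+e^{-\theta_0 y})$, differentiation under the expectation, the identities $\phi'=m'/m$ and $\phi''=(m''m-(m')^2)/m^2$, Cauchy--Schwarz for $\phi''\ge 0$, and the crude bound $\phi''\le m''/m$ together with Jensen's lower bound $m(\theta)\ge e^{\theta\E Y}$ and $|\E Y|\le C_1K_1$ for the upper bound. The only cosmetic difference is that you invoke the standard dominated-convergence criterion for differentiation under the integral directly, whereas the paper writes out the difference-quotient and dominated-convergence argument explicitly (via the auxiliary $\xi(y)$ with $e^{\xi(y)}=(e^y-1)/y$); both are equally valid, and you make the dependence of $K_2$ on $K_1$ alone a bit more transparent by spelling out the $|\E Y|\le C_1K_1$ step.
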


\begin{rmk}
We note that we got the very nice idea of using the log-Laplace transform in the proofs of our persistence results from the manuscript \cite{B14}.
\end{rmk}
To proceed, let us recall some technical concepts and results needed to prove the main theorem.
 Let ${\bf\Phi}=(\Phi_0,\Phi_1,\dots)$ be a discrete-time Markov chain on a general state space $(E,\mathcal{E})$, where $\mathcal{E}$ is a countably generated $\sigma$-algebra.
 Denote by $\mathcal{P}$ the Markov transition kernel for ${\bf\Phi}$.
If there is a non-trivial $\sigma$-finite positive measure $\varphi$ on $(E,\mathcal{E})$ such that for
any $A\in\mathcal{E}$ satisfying $\varphi(A)>0$ we have
$$\sum_{n=1}^\infty \mathcal{P}^n(x, A)>0,\, x\in E$$
 where  $ \mathcal{P}^n$ is the $n$-step transition kernel of ${\bf\Phi}$, then the Markov chain ${\bf\Phi}$ is called \textit{irreducible}.
It can be shown (see \cite{EN}) that if ${\bf\Phi}$ is irreducible, then there exists a positive integer $d$ and disjoint subsets $E_0,\dots,
E_{d-1}$ such that for all $i=0,\dots, d-1$ and all $x\in E_i$, we have
$$\mathcal{P}(x,E_j)=1 \text{ where } j=i+1 \text{ (mod } d).$$
The smallest positive integer $d$ satisfying the above is called the period of ${\bf\Phi}.$
An \textit{aperiodic} Markov chain is a chain with period $d=1$.

A set $C\in\mathcal{E}$ is called \textit{petite}, if there exists a non-negative sequence $(a_n)_{n\in\N}$ with $\sum_{n=1}^\infty a_n=1$
and a nontrivial positive measure $\nu$ on $(E,\mathcal{E})$
such that
$$\sum_{n=1}^\infty a_n \mathcal{P}^n(x, A)\geq\nu(A),\,\, x\in C, A\in\mathcal{E}.$$
We have the following lemma
\begin{lm}\label{lm2.7}
For any $T>0$ the Markov chain $\{(\BX(kT), k\in \N\}$ on $\R^{n,\circ}_+$ is irreducible and aperiodic.
Moreover, every compact set $K\subset\R^{n,\circ}_+$ is petite.
\end{lm}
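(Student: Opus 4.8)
The plan is to exploit the non-degeneracy of the diffusion inside $\R^{n,\circ}_+$, guaranteed by part (1) of Assumption \ref{a.nonde}, together with the support theorem for SDE. First I would work in logarithmic coordinates: setting $Y_i = \ln X_i$, the process $\BY$ solves a (now additively perturbed) SDE on all of $\R^n$ whose diffusion matrix is $\diag(g_i(e^{\by}))\Gamma^\top\Gamma\diag(g_i(e^{\by}))$, which is strictly positive definite and locally Lipschitz; hence $\BY(t)$ is a non-degenerate diffusion on $\R^n$ with smooth transition density with respect to Lebesgue measure (by Hörmander's theorem or classical parabolic PDE regularity). Transporting back, this shows $P_\BX(T,\bx,\cdot)$ has a density $p_T(\bx,\by)>0$ that is jointly continuous and everywhere strictly positive on $\R^{n,\circ}_+\times\R^{n,\circ}_+$; the strict positivity everywhere follows from the Stroock–Varadhan support theorem, since the control system associated to \eqref{e:system} is exactly controllable on $\R^{n,\circ}_+$ because one can steer each coordinate independently (the drift is $C^1$ and the diffusion vector fields span $\R^n$ at every point). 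With a strictly positive continuous transition density, irreducibility with respect to Lebesgue measure on $\R^{n,\circ}_+$ is immediate: $\mathcal P(\bx, A) = \int_A p_T(\bx,\by)\,d\by > 0$ for every $\bx$ and every $A$ of positive measure.

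Next I would deduce aperiodicity. Since $\mathcal P(\bx,\cdot)$ is equivalent to Lebesgue measure on $\R^{n,\circ}_+$ for every $\bx$, there cannot be a nontrivial cyclic decomposition $E_0,\dots,E_{d-1}$ with $d\geq 2$: if such a decomposition existed, then for $\bx\in E_0$ we would have $\mathcal P(\bx, E_1)=1$, forcing $E_0$ to have Lebesgue measure zero or $E_1$ to have full measure, contradicting disjointness and positivity. Hence the period is $d=1$.

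Finally, for the petiteness of compact sets $K\subset\R^{n,\circ}_+$: fix such a $K$, choose a slightly larger compact set $K'$ with $K\subset \inte(K')\subset K'\subset\R^{n,\circ}_+$, and let $\nu(\cdot):=\varepsilon\,\mathrm{Leb}(\cdot\cap K')$ where $\varepsilon:=\inf\{p_T(\bx,\by):\bx\in K,\by\in K'\}>0$ by joint continuity and strict positivity of $p_T$ on the compact set $K\times K'$. Then $\mathcal P(\bx,A)=\int_A p_T(\bx,\by)d\by\geq \nu(A)$ for all $\bx\in K$ and $A\in\mathcal E$, so taking the sequence $(a_n)$ concentrated at $n=1$ (i.e.\ $a_1=1$) exhibits $K$ as a petite set (in fact a small set).

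The main obstacle is the verification of the exact controllability / support-theorem step: one must check that the associated deterministic control system $\dot x_i = x_i f_i(\bx) + x_i g_i(\bx)u_i$ (with $u$ ranging over piecewise-constant controls, after a suitable change to Stratonovich form) can be steered between any two points of $\R^{n,\circ}_+$ in time $T$. This is where the structure of \eqref{e:system} matters — because $g_i>0$ on $\R^{n,\circ}_+$ (a consequence of part (1) of Assumption \ref{a.nonde}, which forces each $g_i$ to be nonvanishing) and the noise vector fields $x_i g_i(\bx)\partial_{x_i}$, together with their brackets with the drift, span $\R^n$ at every interior point, the Hörmander bracket condition and approximate controllability hold; turning approximate into exact controllability uses that the system is a non-degenerate diffusion so one can correct small errors. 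The remaining pieces (the log-change of variables, parabolic regularity, the measure-theoretic arguments for aperiodicity and petiteness) are routine once positivity of the density is in hand.
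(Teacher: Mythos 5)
Your proposal is substantively correct but takes a genuinely different route from the paper. You aim for strict positivity and joint continuity of the \emph{global} transition density $p_T(\bx,\by)$ on $\R^{n,\circ}_+\times\R^{n,\circ}_+$ via a log-change of coordinates, parabolic regularity, and the Stroock--Varadhan support theorem; the paper instead works entirely with the \emph{killed} transition kernel $P_D(t,\bx,\cdot)$ on a bounded smooth domain $D\subset\R^{n,\circ}_+$. By the Feynman--Kac representation and classical positivity results for uniformly elliptic parabolic equations on bounded domains (the cited [AF08, Theorem 3.16] and its corollary), the killed kernel has a jointly continuous strictly positive density $p_D(t,\bx,\by)$ on $D\times D$. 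Petiteness of a compact $K$ then follows by taking $\nu(\cdot)=\int_\cdot\inf_{\bx\in K}p_D(T,\bx,\by)\,d\by$; irreducibility follows since $D$ can be chosen to contain any target set of positive Lebesgue measure, and aperiodicity from the resulting equivalence with Lebesgue measure, exactly as in your last two paragraphs.

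The crucial difference is that the paper never needs the support theorem, controllability, or any assertion about a globally defined, globally continuous density on the non-compact state space. These are precisely the points you flag as ``the main obstacle.'' For a uniformly elliptic (not just hypoelliptic) operator the controllability argument does go through — the diffusion vector fields already span $\R^n$ at every point, so Hörmander brackets are unnecessary — but it is extra machinery. Moreover, your blanket claim that $p_T$ is jointly continuous on all of $\R^{n,\circ}_+\times\R^{n,\circ}_+$ glosses over what happens near $\partial\R^n_+$ and near infinity (the coefficients $f_i(e^{\by}), g_i(e^{\by})$ may grow rapidly, so non-explosion and regularity of the global kernel are not automatic from coefficient bounds alone, only from the a priori non-explosion of $\BX$). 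The paper's localization to a bounded $D$ with smooth boundary sidesteps all of this: one only ever needs elliptic/parabolic regularity on a compact set, which is unconditional. So both approaches succeed, but the paper's is shorter and rests on fewer moving parts; your approach buys a stronger conclusion (a global strictly positive density) that the lemma does not actually require.
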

The proofs of the above lemmas are collected in the Appendix.

\section{Persistence}\label{s:perm}
This section is devoted to finding conditions under which $\BX$ converges to a unique invariant probability measure supported on $\R^{n,\circ}_+$.

It is shown in \cite[Lemma 4]{SBA11} by the minmax principle that Assumption \ref{a.coexn} is equivalent to the existence of $\mathbf p>0$ such that
\begin{equation}\label{e.p}
\min\limits_{\mu\in\M}\left\{\sum_{i}p_i\lambda_i(\mu)\right\}:=2\rho^*>0.
\end{equation}
By rescaling if necessary, we can assume that $\|\bp\|=\delta_0$.
\begin{lm}\label{lm3.1}
Suppose that Assumption \ref{a.coexn} holds. Let $\bp$ and $\rho^*$ be as in \eqref{e.p}.
There exists a $T^*>0$ such that, for any $T>T^*$, $\bx\in\partial\R^n_+, \|\bx\|\leq M$ one has
\begin{equation}
\dfrac1T\int_0^T\E_\bx\Phi(\BX(t))dt\leq-\rho^*
\end{equation}
where
\begin{equation}
\begin{aligned}
\Phi(\bx):=\dfrac{\sum_ic_ix_if_i(\bx)}{1+\bc^\top\bx}-\dfrac{1}2\dfrac{\sum_{i,j} \sigma_{ij}c_ic_jx_ix_jg_i(\bx)g_j(\bx)}{(1+\bc^\top\bx)^2}
-\sum_{i}p_i\left(f_i(\bx)-\dfrac{\sigma_{ii}g^2_i(\bx)}{2} \right).
\end{aligned}
\end{equation}
\end{lm}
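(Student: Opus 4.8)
The plan is to combine the ergodic approximation philosophy sketched in the remark after Theorem \ref{t:pers} with a compactness argument over the (compact) set of boundary starting points. First I would observe that $\Phi$ is continuous on $\R^n_+$ and, by Lemma \ref{lm2.3} together with the growth bound $\sum_i c_i x_i |f_i(\bx)|/(1+\bc^\top\bx)\le \sum_i(|f_i(\bx)|+|g_i(\bx)|^2)$ and $\|\bp\|=\delta_0<\delta_0$ being borderline, that $\Phi$ satisfies a bound of the type required in Lemma \ref{lm2.4}; strictly speaking one needs $\delta<\delta_0$, so I would first verify that $|\Phi(\bx)|\le K_\Phi(1+\bc^\top\bx)^{0}(1+\sum_i(|f_i(\bx)|+|g_i(\bx)|^2))$ holds with exponent $0<\delta_0$, which is immediate since the two fractions are bounded by $C(1+\sum_i(|f_i(\bx)|+g_i^2(\bx)))$ and the $\bp$-term is dominated similarly. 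The key identity is that for \emph{any} invariant probability measure $\pi$ supported on $\partial\R^n_+$,
\[
\int_{\partial\R^n_+}\Phi(\bx)\,\pi(d\bx) = -\sum_i p_i\lambda_i(\pi) \le -2\rho^*,
\]
where the first equality uses Lemma \ref{lm2.3} (the first fraction minus half the second integrates to zero against $\pi$) together with the definition of $\lambda_i$, and the inequality is \eqref{e.p} applied to $\pi\in\Conv(\M)$ via the ergodic decomposition theorem.

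Next I would argue by contradiction. Suppose no such $T^*$ exists. Then there are sequences $T_k\to\infty$ (with $T_k>1$) and $\bx_k\in\partial\R^n_+$ with $\|\bx_k\|\le M$ such that
\[
\frac{1}{T_k}\int_0^{T_k}\E_{\bx_k}\Phi(\BX(t))\,dt > -\rho^*
\]
for all $k$. Writing this as $\int_{\R^n_+}\Phi(\bx)\,\Pi^{\bx_k}_{T_k}(d\bx) > -\rho^*$, I would like to pass to a weak limit. The family $\{\Pi^{\bx_k}_{T_k}\}$ is tight: by Lemma \ref{lm2.2}, $\E_{\bx_k}(1+\bc^\top\BX(t))^{\delta_0}\le H_1+(1+\bc^\top\bx_k)^{\delta_0}\le H_1+(1+\|\bc\|M)^{\delta_0}$ is bounded uniformly in $t$ and $k$, so averaging gives a uniform bound on $\int(1+\bc^\top\bx)^{\delta_0}\Pi^{\bx_k}_{T_k}(d\bx)$, hence tightness by Markov's inequality. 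Since $\partial\R^n_+$ is closed and $\BX$ started on the boundary stays on the boundary (Lemma \ref{lm2.0}), every subsequential weak limit is supported on $\partial\R^n_+$; and by a standard argument (Feller property from Lemma \ref{lm2.2}, plus the averaging structure of $\Pi^{\bx_k}_{T_k}$) every such limit $\pi$ is an invariant probability measure. Passing to a further subsequence, $\Pi^{\bx_k}_{T_k}\to\pi$ weakly for some invariant $\pi$ on $\partial\R^n_+$.

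Now I would invoke Lemma \ref{lm2.4} with $h=\Phi$: since $\bx_k$ satisfies $\|\bx_k\|\le M$, $T_k\to\infty$, $\Pi^{\bx_k}_{T_k}\to\pi$, and $\Phi$ has the required sub-$\delta_0$ growth, we get
\[
\lim_{k\to\infty}\int_{\R^n_+}\Phi(\bx)\,\Pi^{\bx_k}_{T_k}(d\bx) = \int_{\R^n_+}\Phi(\bx)\,\pi(d\bx) = -\sum_i p_i\lambda_i(\pi) \le -2\rho^*.
\]
But the left side is $\ge -\rho^*$ by our assumption, yielding $-\rho^* \le -2\rho^*$, i.e. $\rho^*\le 0$, contradicting $\rho^*>0$. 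This contradiction proves the lemma. The main obstacle I anticipate is the justification that every subsequential weak limit of $\Pi^{\bx_k}_{T_k}$ is genuinely an \emph{invariant} probability measure (not just a probability measure), and that no mass escapes to infinity so the limit is a probability measure rather than a subprobability; both points rely essentially on the moment bound \eqref{e2-lm2.2} and the Feller property from Lemma \ref{lm2.2}, and the invariance is the standard Krylov--Bogolyubov argument, which must be run carefully on the non-compact space $\partial\R^n_+$. The borderline exponent issue ($\|\bp\|=\delta_0$ but Lemma \ref{lm2.4} needs $\delta<\delta_0$) is handled by noting that the problematic $\prod x_i^{-p_i}$ factor does not appear in $\Phi$ — only in $V$ — so $\Phi$ actually satisfies the growth bound with exponent $0$, well inside the admissible range.
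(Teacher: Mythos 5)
Your proof is correct and follows essentially the same route as the paper: a contradiction argument, tightness of the normalized occupation measures $\Pi^{\bx_k}_{T_k}$ from the moment bound of Lemma \ref{lm2.2}, passage to an invariant weak limit supported on $\partial\R^n_+$, Lemma \ref{lm2.4} to pass $\Phi$ through the limit, and then Lemma \ref{lm2.3} plus \eqref{e.p} to get $\int\Phi\,d\pi=-\sum_ip_i\lambda_i(\pi)\leq-2\rho^*$, contradicting the assumed lower bound $>-\rho^*$. The extra care you take in checking that $\Phi$ satisfies the growth hypothesis of Lemma \ref{lm2.4} with exponent $\delta=0<\delta_0$ (since no $\prod x_i^{-p_i}$ factor appears in $\Phi$) is a useful observation that the paper leaves implicit.
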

\begin{proof}
We argue by contradiction. Suppose that the conclusion of this lemma is not true.
Then, we can find $\bx_k\in\partial\R^n_+, \|\bx_k\|\leq M$
and $T_k>0$, $\lim_{k\to\infty} T_k=\infty$
such that
\begin{equation}\label{e3.9}
\dfrac1T_k\int_0^{T_k}\E_{\bx_k}\Phi(\BX(t))dt>-\rho^*\,,\,k\in\N.
\end{equation}
Note that
\[
\Pi^{\bx_k}_t(d\by):=\dfrac1t\int_0^t\PP_{\bx_k}\{\BX(s)\in d\by\}ds.
\]

By Tonelli's Theorem we get that

\begin{equation}\label{e:Ton}
\begin{aligned}
\int_{\R_+^n}(1+\bc^\top\by)^{\delta_0} \Pi^{\bx_k}_t(d\by) &= \int_{\R_+^n}(1+\bc^\top\by)^{\delta_0} \dfrac1t\int_0^t\PP_{\bx_k} \{\BX(s)\in d\by\}ds\,d\by\\
&= \dfrac1t\int_0^t \E_{\bx_k} (1+\bc^\top\BX(s))^{\delta_0}\,ds.
\end{aligned}
\end{equation}
It follows from Lemma \ref{lm2.2} that
\begin{equation}
\begin{aligned}
\sup_{k\in\N, t\geq 0} \int_{\R_+^n}(1+\bc^\top\by)^{\delta_0} \Pi^{\bx_k}_t(d\by) &= \sup_{k\in\N, t\geq 0}  \dfrac1t\int_0^t \E_{\bx_k}  (1+\bc^\top\BX(s))^{\delta_0}\,ds\\
&\leq     \sup_{\| \bx\|\leq M, t\geq 0}  \dfrac1t\int_0^t   \left(H_1+(1+\bc^\top\bx)^{\delta_0}e^{-\delta_0\gamma_bs}\right)\,ds\\
&<\infty.
\end{aligned}
\end{equation}
This implies that the family $\left(\Pi^{\bx_k}_{T_k}\right)_{k\in \N}$ is tight in $\R_+^n$. As a result
$\left(\Pi^{\bx_k}_{T_k}\right)_{k\in \N}$ has a convergent subsequence in the weak$^*$-topology.
Without loss of generality, we can suppose that $\left\{\Pi^{\bx_k}_{T_k}:k\in\N\right\}$
is a convergent sequence in the weak$^*$-topology. It can be shown (by \cite[Theorem 9.9]{EK09} or by \cite[Proposition 6.4]{EHS15}) that its limit is an invariant probability measure $\mu$ of $(\BX(t))$. As a consequence of Lemma \ref{lm2.4}
$$\lim_{k\to\infty}\dfrac1T_k\int_0^{T_k}\E_{\bx_k}\Phi(\BX(t))dt=\int_{\R^n_+}\Phi(\bx)\mu(d\bx).$$
In view of Lemma \ref{lm2.3} and \eqref{e.p} we get that
$$\lim_{k\to\infty}\dfrac1T_k\int_0^{T_k}\E_{\bx_k}\Phi(\BX(t))dt=-\sum_{i=1}^np_i\lambda_i(\mu)\leq -2\rho^*,$$
which contradicts \eqref{e3.9}.
\end{proof}

From now on let $n^*\in\N$ be such that
\begin{equation}\label{e:n*}
\gamma_b(n^*-1)>H.
\end{equation}
\begin{prop}\label{prop2.1}
Let $V(\cdot)$ be defined by \eqref{e:V} with $\bp$ and $\rho^*$ satisfying \eqref{e.p} and $T^*>0$ satisfying the assumptions of Lemma \ref{lm3.1}.
There are $\theta\in\left(0,\frac{\delta_0}2\right)$, $K_\theta>0$, such that for any $T\in[T^*,n^*T^*]$ and $\bx\in\R^{n,\circ}_+, \|\bx\|\leq M$,
$$\E_\bx V^\theta(\BX(T))\leq V^\theta(\bx)\exp\left(-\frac{1}{2}\theta \rho^*T\right) +K_\theta.$$
\end{prop}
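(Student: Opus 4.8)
The plan is to write $V^\theta(\BX(T))$ multiplicatively in terms of the function $\Phi$ of Lemma~\ref{lm3.1} and then run a log-Laplace estimate. Applying It\^o's formula to $\ln V(\BX(t))=\ln(1+\bc^\top\BX(t))-\sum_i p_i\ln X_i(t)$ one checks that
\[
d\ln V(\BX(t))=\Phi(\BX(t))\,dt+dM(t),
\]
where $\Phi$ is precisely the function defined in the statement of Lemma~\ref{lm3.1} and $M$ is a local martingale with $d\langle M\rangle(t)=m(\BX(t))\,dt$ for some $m$ with $0\le m(\bx)\le C(1+\sum_i g_i^2(\bx))$; by \eqref{e3-lm2.2} this makes $M$ a genuine $L^2$-martingale on $[0,T]$, so $\E_\bx M(T)=0$. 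Consequently $V^\theta(\BX(T))=V^\theta(\bx)\exp(\theta Z_T)$ with $Z_T:=\int_0^T\Phi(\BX(s))\,ds+M(T)$, whence $\E_\bx V^\theta(\BX(T))=V^\theta(\bx)\,\E_\bx\exp(\theta Z_T)$, and everything reduces to bounding $\E_\bx\exp(\theta Z_T)$.

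I would then split $\{\bx\in\R_+^{n,\circ}:\|\bx\|\le M\}$ into the compact core $\mathcal K_\eta:=\{\|\bx\|\le M,\ \min_i x_i\ge\eta\}$ and the near-boundary remainder. On $\mathcal K_\eta$ the function $V^\theta$ is bounded and (by Lemma~\ref{lm2.0} together with Jensen's inequality) $\E_\bx V^\theta(\BX(T))\le e^{\theta HT}V^\theta(\bx)\le e^{\theta H n^*T^*}\sup_{\mathcal K_\eta}V^\theta$, so taking $K_\theta$ at least this large makes the asserted inequality trivially true there. For $\bx$ in the remainder I must show $\E_\bx\exp(\theta Z_T)\le e^{-\frac12\theta\rho^*T}$ once $\eta$ and $\theta$ are small enough. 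The first ingredient is a transfer of Lemma~\ref{lm3.1} off the boundary: since $\E_\bx Z_T=\int_0^T\E_\bx\Phi(\BX(s))\,ds$ and the map $(\bx,T)\mapsto\int_0^T\E_\bx\Phi(\BX(s))\,ds$ is continuous on $\{\|\bx\|\le M\}\times[T^*,n^*T^*]$ (the Feller property plus the uniform integrability of $\Phi$ furnished by \eqref{e3-lm2.2}), and since it is $\le-\rho^*T$ on the compact set $(\partial\R^n_+\cap\{\|\bx\|\le M\})\times[T^*,n^*T^*]$ by Lemma~\ref{lm3.1}, it is $\le-\frac34\rho^*T$ on a neighborhood thereof; hence $\E_\bx Z_T\le-\frac34\rho^*T$ for $\bx$ within distance $\eta$ of $\partial\R^n_+$, uniformly in $T\in[T^*,n^*T^*]$.

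The second ingredient is Lemma~\ref{lm2.5}, applied to $Y=Z_T$ with $\theta_0=\delta_0$. The direct exponential moment is uniformly controlled by Lemma~\ref{lm2.0}: $\E_\bx\exp(\delta_0 Z_T)=\E_\bx V^{\delta_0}(\BX(T))/V^{\delta_0}(\bx)\le e^{\delta_0 HT}$. The reverse moment $\E_\bx\exp(-\delta_0 Z_T)=V^{\delta_0}(\bx)\,\E_\bx V^{-\delta_0}(\BX(T))$ must be bounded uniformly over the near-boundary strip as well; I would obtain this by stopping $\BX$ at the exit time $\tau$ of a large compact subset of $\R_+^{n,\circ}$, on which the coefficients (hence $\Phi$ and $\langle M\rangle$) are bounded so that $X_i(t)/x_i$ has exponential moments and $\E_\bx V^{-\delta_0}(\BX(t\wedge\tau))\le K_1 V^{-\delta_0}(\bx)$, while the contribution of $\{\tau<T\}$ is negligible because $V^{-\delta_0}$ is small away from the compact set and $\PP_\bx(\tau<T)$ is controlled by \eqref{e2-lm2.2}. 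Granted this two-sided bound, Lemma~\ref{lm2.5} yields $\phi_\bx(\theta):=\ln\E_\bx\exp(\theta Z_T)\le\theta\,\E_\bx Z_T+\frac12 K_2\theta^2\le-\frac34\theta\rho^*T+\frac12 K_2\theta^2$ for $\theta\in[0,\delta_0/2)$ with $K_2$ uniform; choosing $\theta\le\min\{\delta_0/2,\ \rho^*T^*/(2K_2)\}$ gives $\phi_\bx(\theta)\le-\frac12\theta\rho^*T$, hence $\E_\bx\exp(\theta Z_T)\le e^{-\frac12\theta\rho^*T}$. Combining the two regions proves the proposition.

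I expect the main obstacle to be precisely the uniform reverse exponential estimate $\E_\bx\exp(-\delta_0 Z_T)\le K_1$ for $\bx$ near $\partial\R^n_+$ — equivalently $\E_\bx V^{-\delta_0}(\BX(T))\lesssim V^{-\delta_0}(\bx)$ — since, unlike $V^{\delta_0}$, the function $V^{-\delta_0}$ is not a Lyapunov function in the radial direction and $-\Phi$ is unbounded above, so Lemma~\ref{lm2.0} has no direct analogue and one is forced into the localization argument sketched above. This, together with making rigorous the continuity transfer of Lemma~\ref{lm3.1} past the boundary despite $\Phi$ being unbounded, is where Lemmas~\ref{lm2.0}--\ref{lm2.2} are really needed.
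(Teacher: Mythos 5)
Your overall architecture matches the paper's: apply It\^o's formula to $\ln V$ to write $V^\theta(\BX(T))=V^\theta(\bx)\exp(\theta G(T))$ with drift given by $\Phi$; transfer Lemma~\ref{lm3.1} a small distance $\tilde\delta$ off $\partial\R^n_+$ by the Feller property; control the log-Laplace transform via Lemma~\ref{lm2.5}; and absorb the interior region $\{\|\bx\|\le M,\ \dist(\bx,\partial\R^n_+)\ge\tilde\delta\}$ into $K_\theta$ via \eqref{e1-lm2.2}. You also correctly pinpoint the crux: the reverse exponential moment $\E_\bx\exp(-\delta_0 G(T))=\E_\bx V^{-\delta_0}(\BX(T))/V^{-\delta_0}(\bx)$ must be bounded uniformly over the near-boundary strip.

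Your proposed fix for that crux, however, does not go through. You propose stopping $\BX$ at the exit time $\tau$ of a large compact subset $D\subset\R^{n,\circ}_+$. But any such $D$ satisfies $\inf_{D}\min_i x_i=\eta>0$, whereas the strip $\{\|\bx\|\le M,\ \dist(\bx,\partial\R^n_+)<\tilde\delta\}$ contains points with $\min_i x_i$ arbitrarily small. For those starting points $\tau=0$ immediately and the ``inside $D$'' bound is vacuous, while the ``contribution of $\{\tau<T\}$'' term is
$\E_\bx\left[V^{-\delta_0}(\BX(T))\1_{\{\tau<T\}}\right]/V^{-\delta_0}(\bx)$,
which you can only bound by $\|V^{-\delta_0}\|_\infty\PP_\bx(\tau<T)/V^{-\delta_0}(\bx)$; the numerator need not vanish proportionally to $V^{-\delta_0}(\bx)$, and the denominator tends to $0$ on the very region you need. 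So the claim that the $\{\tau<T\}$ contribution is ``negligible'' is unsupported, and this is a genuine gap, not just a technical loose end.

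The paper sidesteps all of this with a short algebraic trick you missed. Introduce the companion function $\hat V(\bx)=(1+\bc^\top\bx)\prod_i x_i^{p_i}$, i.e.\ $V$ with the sign of the exponents $p_i$ flipped. The Lyapunov estimate $\Lom F^{\delta_0}\le\delta_0 H F^{\delta_0}$ established for $V$ in the proof of Lemma~\ref{lm2.0} rests only on \eqref{e2.4}--\eqref{e2.5}, which are indifferent to the signs of the $p_i$ so long as $\|\bp\|\le\delta_0$ (this is flagged in the remark following Lemma~\ref{lm3.3}). Hence $\E_\bx\hat V^{\delta_0}(\BX(T))\le e^{\delta_0 HT}\hat V^{\delta_0}(\bx)$ by the same argument as \eqref{e1-lm2.2}. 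Since $V^{-\delta_0}(\bx)=\hat V^{\delta_0}(\bx)(1+\bc^\top\bx)^{-2\delta_0}\le\hat V^{\delta_0}(\bx)$, one gets
\[
\E_\bx\exp(-\delta_0 G(T))=\frac{\E_\bx V^{-\delta_0}(\BX(T))}{V^{-\delta_0}(\bx)}\le\frac{\E_\bx\hat V^{\delta_0}(\BX(T))}{\hat V^{\delta_0}(\bx)}(1+\bc^\top\bx)^{2\delta_0}\le(1+\bc^\top\bx)^{2\delta_0}e^{\delta_0 HT},
\]
which is uniformly bounded on $\{\|\bx\|\le M\}$ with no localization whatsoever. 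This is the step you need; the rest of your outline then goes through as in the paper.
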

\begin{proof}
We have from It\^o's formula that
\begin{equation}\label{e:G}
\ln V(\BX(T))=\ln V(\BX(0)) + G(T)
\end{equation}
where
\begin{equation}
\begin{aligned}
G(T)=\int_0^T\Phi(\BX(t))dt+ \int_0^T\left[\dfrac{\sum_ic_iX_i(t)g_i(\BX(t))dE_i(t)}{1+\bc^\top\BX(t)}-\sum_ip_ig_i(\BX(t))dE_i(t)\right].
\end{aligned}
\end{equation}
In view of \eqref{e:G} and \eqref{e1-lm2.2}
\begin{equation}\label{e3.4_2}
\E_\bx \exp(\delta_0 G(T))=\dfrac{\E_\bx V^{\delta_0}(\BX(T))}{V^{\delta_0}(\bx)}\leq  \exp(\delta_0 HT).
\end{equation}
Let $\hat V(\cdot):\R^{n,\circ}_+\mapsto\R_+$ be defined by $\hat V(\bx)=(1+\bc^\top\bx)\prod_{i=1}^n x_i^{p_1}$.
We can use \eqref{e2.1} and some of the estimates from the proof of Lemma \ref{lm2.0} to obtain
\begin{equation}\label{vhat-1}
\dfrac{\E_\bx \hat V^{\delta_0}(\BX(T))}{\hat V^{\delta_0}(\bx)}\leq  \exp(\delta_0 HT).
\end{equation}
Note that
\begin{equation}\label{vhat-2}
V^{-\delta_0}(\bx)=\hat V^{\delta_0}(\bx)(1+\bc^\top\bx)^{-2\delta_0}\leq \hat V^{\delta_0}(\bx).
\end{equation}
Applying \eqref{vhat-2} to \eqref{vhat-1} yields
\begin{equation}\label{e3.5}
\begin{aligned}
\E_\bx \exp(-\delta_0 G(T))=&\dfrac{\E_\bx V^{-\delta_0}(\BX(T))}{V^{-\delta_0}(\bx)}\\
\leq&\dfrac{\E_\bx\hat V^{\delta_0}(\BX(T))}{V^{-\delta_0}(\bx)}\\
\leq& \dfrac{\E_\bx\hat V^{\delta_0}(\BX(T))}{\hat V^{\delta_0}(\bx)}(1+\bc^\top\bx)^{2\delta_0}\\
\leq& (1+\bc^\top\bx)^{2\delta_0}\exp(\delta_0 HT).
\end{aligned}
\end{equation}

By \eqref{e3.4_2} and \eqref{e3.5} the assumptions of Lemma \ref{lm2.5} hold for $G(T)$. Therefore,
there is $\tilde K_2\geq 0$ such that
$$0\leq \dfrac{d^2\tilde\phi_{\bx,T}}{d\theta^2}(\theta)\leq \tilde K_2\,\text{ for all }\,\theta\in\left[0,\frac{\delta_0}2\right),\, \bx\in\R^{n,\circ}_+, \|\bx\|\leq M, T\in [T^*,n^*T^*]$$
where
$$\tilde\phi_{\bx,T}(\theta)=\ln\E_\bx \exp(\theta G(T)).$$
In view of Lemma \ref{lm3.1} and the Feller property of $(\BX(t))$,
there exists a $\tilde\delta>0$ such that
if $\|\bx\|\leq M$, $\dist(\bx,\partial\R^n_+)<\tilde\delta$ and $T\in [T^*,n^*T^*]$
then
\begin{equation}\label{e3.6}
\begin{aligned}
\E_\bx G(T)=&\int_0^T\E_\bx\left( \dfrac{\sum_ic_iX_i(t)f_i(\BX(t))}{1+\bc^\top\BX(t)}- \dfrac{\sum_{i,j} c_ic_jX_i(t)X_j(t)g_i(\BX(t))g_j(\BX(t))\sigma_{ij}}{2(1+\bc^\top\BX(t))^2}\right)dt\\
&-\sum_{i=1}^np_i\int_0^T\E_\bx\left(f_i(\BX(t))-\dfrac{\sigma_{ii}g^2_i(\BX(t))}{2} \right)dt\leq -\dfrac34\rho^*T.
\end{aligned}
\end{equation}
Another application of Lemma \ref{lm3.1} yields
$$\dfrac{d\tilde\phi_{\bx,T}}{d\theta}(0)=\E_\bx G(T)\leq -\dfrac34\rho^*T.$$
By a Taylor expansion around $\theta=0$, for $\|\bx\|\leq M, \dist(\bx,\partial\R^n_+)<\tilde\delta, T\in [T^*,n^*T^*]$ and $\theta\in\left[0,\frac{\delta_0}2\right)$ we have
$$\tilde\phi_{\bx,T}(\theta)\leq -\dfrac34\rho^*T\theta+\theta^2\tilde K_2 .$$
If we choose any $\theta\in\left(0,\frac{\delta_0}2\right)$ satisfying
$\theta<\frac{\rho^*T^*}{4\tilde K_2}$, we obtain that
\begin{equation}\label{e3.10}
\tilde\phi_{\bx,T}(\theta)\leq -\dfrac12\rho^*T\theta\,\,\text{ for all }\,\bx\in\R^{n,\circ},\|\bx\|\leq M, \dist(\bx,\partial\R^n_+)<\tilde\delta, T\in [T^*,n^*T^*].
\end{equation}
In light of \eqref{e3.10}, we have for such $\theta$ and $\|\bx\|\leq M, 0<\dist(\bx,\partial\R^n_+)<\tilde\delta, T\in [T^*,n^*T^*]$ that
\begin{equation}\label{e3.11}
\dfrac{\E_\bx V^\theta(\BX(T))}{V^\theta(\bx)}=\exp \tilde\phi_{\bx,T}(\theta)\leq\exp\left(-\frac{1}{2}\rho^*T\theta\right).
\end{equation}
In view of \eqref{e1-lm2.2},
we have for $\bx$ satisfying $\|\bx\|\leq M, \dist(\bx,\partial\R^n_+)\geq\tilde\delta$ and $T\in  [T^*,n^*T^*]$ that
\begin{equation}\label{e3.12}
\E_\bx V^\theta(\BX(T))\leq \exp(\theta n^*T^*H)\sup\limits_{\|\bx\|\leq M, \dist(\bx,\partial\R^n_+)\geq\tilde\delta}\{V(\bx)\}=:K_\theta<\infty.
\end{equation}
The desired result follows from \eqref{e3.11} and \eqref{e3.12}.
\end{proof}
\begin{thm}\label{thm3.1}
Suppose that Assumptions \ref{a.nonde} and \ref{a.coexn} hold.
Let $\theta$ be as in Proposition \ref{prop2.1}, $n^*$ as in \eqref{e:n*}.
There are $\kappa=\kappa(\theta,T^*)\in(0,1)$, $\tilde K=\tilde K(\theta,T^*)>0$   such that
\begin{equation}\label{e:lya}
\E_\bx V^\theta(\BX(n^*T^*))\leq \kappa V^\theta(x)+\tilde K\,\text{ for all }\, \bx\in\R^{n,\circ}_+.
\end{equation}
As a result,
$\BX$ is strongly persistent. Furthermore, the convergence of its transition probability in total variation to its unique probability measure $\pi^*$ on $\R^{n,\circ}_+$ is
exponentially fast. For any initial value $\mathbf{x}\in\R^{n,\circ}_+$ and any $\pi^*$-integrable function $f$ we have
\begin{equation}\label{slln}
\PP_\bx\left\{\lim\limits_{T\to\infty}\dfrac1T\int_0^Tf\left(\BX^{}(t)\right)dt=\int_{\R_+^{n,\circ}}f(\mathbf{u})\pi^*(d\mathbf{u})\right\}=1.
\end{equation}

\end{thm}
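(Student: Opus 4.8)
The plan is to first establish the geometric drift inequality \eqref{e:lya} and then feed it into the standard theory of geometrically ergodic Markov chains. Throughout put $\tau:=\inf\{t\ge0:\|\BX(t)\|\le M\}$. The first ingredient is that $V^\theta$ has a strictly negative generator away from the origin. The It\^o computation already carried out in the proof of Proposition \ref{prop2.1} gives $\Lom V^\theta(\bx)=\theta V^\theta(\bx)\big(\Phi(\bx)+\tfrac\theta2 a(\bx)\big)$, with $\Phi$ the function of Lemma \ref{lm3.1} for the weights $\bp$ of \eqref{e.p} and $a(\bx)=\sum_{i,j}\sigma_{ij}x_ix_jg_i(\bx)g_j(\bx)(\partial_i\ln V)(\partial_j\ln V)$, where $|x_i\partial_i\ln V|\le 1+p_i$ forces $|a(\bx)|\le(1+\delta_0)^2\sum_{i,j}|g_i(\bx)g_j(\bx)\sigma_{ij}|$. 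Using \eqref{e2.1} and $\sum_i p_i=\delta_0$ to absorb the weight contributions, one gets $\Phi(\bx)\le-\gamma_b-3\delta_0\sum_{i,j}|g_i(\bx)g_j(\bx)\sigma_{ij}|$ on $\{\|\bx\|\ge M\}$; since $\theta<\delta_0/2<1$, the slack $3\delta_0\sum_{i,j}|g_ig_j\sigma_{ij}|$ dominates $\tfrac\theta2 a(\bx)$ and hence $\Lom V^\theta(\bx)\le-\theta\gamma_b V^\theta(\bx)$ for $\|\bx\|\ge M$. By the usual localization (legitimate thanks to the moment bounds \eqref{e1-lm2.2} and \eqref{e2-lm2.2}), $\big(e^{\theta\gamma_b(t\wedge\tau)}V^\theta(\BX(t\wedge\tau))\big)_{t\ge0}$ is then a nonnegative supermartingale, so $\E_\bx\big[e^{\theta\gamma_b(t\wedge\tau)}V^\theta(\BX(t\wedge\tau))\big]\le V^\theta(\bx)$ for all $t$ and all $\bx\in\R^{n,\circ}_+$; separately, Jensen's inequality and \eqref{e1-lm2.2} give $\E_\bx V^\theta(\BX(t))\le e^{\theta Ht}V^\theta(\bx)$.

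To prove \eqref{e:lya}: if $\|\bx\|\le M$, apply Proposition \ref{prop2.1} with $T=n^*T^*$ and we are done. If $\|\bx\|>M$, split on the value of $\tau$. On $\{\tau>(n^*-1)T^*\}$ the process stays in $\{\|\cdot\|\ge M\}$ throughout $[0,(n^*-1)T^*]$, so the supermartingale estimate gives $\E_\bx[\1_{\{\tau>(n^*-1)T^*\}}V^\theta(\BX((n^*-1)T^*))]\le e^{-\theta\gamma_b(n^*-1)T^*}V^\theta(\bx)$; combining this with the Markov property at time $(n^*-1)T^*$ and $\E_\by V^\theta(\BX(T^*))\le e^{\theta HT^*}V^\theta(\by)$ shows that this part of $\E_\bx V^\theta(\BX(n^*T^*))$ is at most $e^{\theta T^*(H-\gamma_b(n^*-1))}V^\theta(\bx)$, a strict contraction precisely because of the choice \eqref{e:n*} of $n^*$. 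On $\{\tau\le(n^*-1)T^*\}$ one has $n^*T^*-\tau\ge T^*$ and $\|\BX(\tau)\|=M$ by path continuity, so the strong Markov property at $\tau$ together with Proposition \ref{prop2.1} yields $\E_{\BX(\tau)}V^\theta(\BX(n^*T^*-\tau))\le V^\theta(\BX(\tau))e^{-\frac12\theta\rho^*(n^*T^*-\tau)}+K_\theta$; decomposing the event into $\{\tau\in((j-1)T^*,jT^*]\}$ for $j=1,\dots,n^*-1$, bounding $e^{-\frac12\theta\rho^*(n^*T^*-\tau)}\le e^{-\frac12\theta\rho^*(n^*-j)T^*}$ and (Markov at $(j-1)T^*$ plus the supermartingale) $\E_\bx[\1_{\{\tau>(j-1)T^*\}}V^\theta(\BX((j-1)T^*))]\le e^{-\theta\gamma_b(j-1)T^*}V^\theta(\bx)$, one lands on \eqref{e:lya} with
\[
\kappa=e^{\theta T^*\left(H-\gamma_b(n^*-1)\right)}+\sum_{j=1}^{n^*-1}e^{-\frac12\theta\rho^*(n^*-j)T^*}\,e^{-\theta\gamma_b(j-1)T^*},\qquad \tilde K=(n^*-1)K_\theta.
\]
\emph{The crux of the argument — and the step I expect to be hardest — is to check that this $\kappa$ is $<1$}: each summand is $<1$, but keeping their sum below $1$ forces a careful coordination of the constants, resting on the strict positivity $\rho^*>0$ coming from Proposition \ref{prop2.1} (hence ultimately on Assumption \ref{a.coexn}), on the strict inequality \eqref{e:n*}, and on the freedom to take $T^*$ large among the thresholds admissible in Lemma \ref{lm3.1}; one also has to verify integrability carefully so that the local supermartingale above is a genuine supermartingale.

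With \eqref{e:lya} in hand, the remainder is routine. The sublevel sets $C_R:=\{\bx\in\R^{n,\circ}_+:V^\theta(\bx)\le R\}$ are compact subsets of $\R^{n,\circ}_+$: they stay bounded away from $\partial\R^n_+$ because every $p_i>0$, and they are norm-bounded because $V(\bx)\ge c\,\|\bx\|^{1-\delta_0}$ by the AM--GM inequality (recall $\sum_i p_i=\delta_0<1$). By Lemma \ref{lm2.7} each $C_R$ is petite and the skeleton chain $(\BX(kn^*T^*))_{k\ge0}$ is irreducible and aperiodic, so \eqref{e:lya} is a geometric Foster--Lyapunov condition toward a petite set; the geometric ergodicity theorem for $\psi$-irreducible aperiodic Markov chains (Meyn and Tweedie) then provides a unique invariant probability measure $\pi^*$ of the skeleton, carried by $\R^{n,\circ}_+$, together with a bound $\|P_\BX(kn^*T^*,\bx,\cdot)-\pi^*\|_{\text{TV}}\le C\,r^{k}\big(V^\theta(\bx)+1\big)$ for some $r\in(0,1)$.

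Finally, transfer this to continuous time. Since $\pi^*$ is $P_{n^*T^*}$-invariant and the $P_{n^*T^*}$-invariant law on $\R^{n,\circ}_+$ is unique, $P_t\pi^*$ is $P_{n^*T^*}$-invariant for every $t$, whence $P_t\pi^*=\pi^*$; thus $\pi^*$ is the unique invariant probability measure of $\BX$ on $\R^{n,\circ}_+$. For arbitrary $t\ge0$ write $t=kn^*T^*+s$ with $s\in[0,n^*T^*)$, apply the Markov property at time $s$, and use $\E_\bx V^\theta(\BX(s))\le e^{\theta Hn^*T^*}V^\theta(\bx)$ to obtain $\|P_\BX(t,\bx,\cdot)-\pi^*\|_{\text{TV}}\le C\,r^{k}\big(e^{\theta Hn^*T^*}V^\theta(\bx)+1\big)$, which is exponentially small in $t$; in particular $\BX$ is strongly stochastically persistent. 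The strong law \eqref{slln} follows from positive Harris recurrence of $(\BX(kn^*T^*))_{k\ge0}$ and the ergodic theorem for Harris chains applied to $\pi^*$-integrable $f$, the reduction of the continuous-time time-average to the skeleton time-average (via the block functional $\by\mapsto\E_\by\int_0^{n^*T^*}f(\BX(t))\,dt$, whose $\pi^*$-integrability follows from invariance of $\pi^*$ and Lemma \ref{lm2.0}) being standard.
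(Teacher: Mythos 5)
Your overall plan (build the drift inequality \eqref{e:lya} from the negative generator of $V^\theta$ away from the origin plus Proposition \ref{prop2.1}, then invoke Meyn--Tweedie geometric ergodicity through the petiteness of compacts from Lemma \ref{lm2.7}) matches the paper, and the second half of your argument — compactness of the $V^\theta$-sublevel sets, uniqueness of the invariant measure, transfer from the skeleton to continuous time, the SLLN from positive Harris recurrence — is sound. However, the heart of the proof, establishing $\kappa<1$ in \eqref{e:lya}, has a genuine gap that your escape hatch does not close. Your decomposition gives $\kappa=e^{\theta T^*(H-\gamma_b(n^*-1))}+\sum_{j=1}^{n^*-1}e^{-\frac12\theta\rho^*(n^*-j)T^*}e^{-\theta\gamma_b(j-1)T^*}$, an \emph{additive} combination of $n^*$ sub-unit quantities which has no reason to be below $1$; the trouble is that you bound $\E_\bx[\1_{\{\tau\in A_j\}}V^\theta(\BX(n^*T^*))]\le c_jV^\theta(\bx)$ separately on each of the $n^*$ slices $A_j$, thereby spending the single resource $V^\theta(\bx)$ $n^*$ times. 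Your proposed remedy — ``freedom to take $T^*$ large'' — backfires: the admissible $\theta$ in Proposition \ref{prop2.1} is constrained by $\theta<\rho^*T^*/(4\tilde K_2)$, and $\tilde K_2$ is built from Lemma \ref{lm2.5} with $K_1\asymp e^{\delta_0 H n^*T^*}$ (from \eqref{e3.4_2}--\eqref{e3.5}), so $\tilde K_2$ grows at least exponentially in $T^*$; hence $\theta T^*$ (the quantity that must be large for each of your exponentials to be small) actually tends to $0$ as $T^*\to\infty$. So enlarging $T^*$ makes each summand \emph{closer} to $1$, not smaller.

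The paper sidesteps this entirely with a single use of the supermartingale. Applying Dynkin to $e^{\theta\gamma_b(\tau\wedge n^*T^*)}V^\theta$ gives the one inequality
\[
V^\theta(\bx)\;\ge\;\E_\bx\Big[e^{\theta\gamma_b(\tau\wedge n^*T^*)}V^\theta\big(\BX(\tau\wedge n^*T^*)\big)\Big]\;\ge\;s_1+e^{\theta\gamma_b(n^*-1)T^*}s_2+e^{\theta\gamma_b n^*T^*}s_3,
\]
where $s_1,s_2,s_3$ are the contributions on $\{\tau\le(n^*-1)T^*\}$, $\{(n^*-1)T^*<\tau<n^*T^*\}$, $\{\tau\ge n^*T^*\}$ (a partition). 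Each $s_j$ is then shown (via the strong Markov property, Proposition \ref{prop2.1} for $s_1$, and \eqref{e1-lm2.2} for $s_2$) to dominate the corresponding piece of $\E_\bx V^\theta(\BX(n^*T^*))$ with a factor $\ge e^{-a_jT^*}$, and since the slices are disjoint one can take the \emph{minimum} of the three products rather than a sum; the result is $V^\theta(\bx)\ge e^{m\theta T^*}\E_\bx V^\theta(\BX(n^*T^*))-K_\theta e^{\frac12\theta\rho^*T^*}$ with $m=\min\{\frac12\rho^*,\gamma_b n^*,\gamma_b(n^*-1)-H\}>0$, so $\kappa=e^{-m\theta T^*}<1$ for \emph{any} admissible $\theta,T^*$, with no delicate balancing required. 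You should restructure your argument around this single weighted supermartingale inequality and the three-way split (the finer $j$-split is not needed), bounding each slice from below by the corresponding expectation at time $n^*T^*$ and taking the minimum of the resulting factors.
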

\begin{proof}
By direct calculation and using \eqref{e2.1}, we have
\begin{equation}\label{et1.1}
\Lom V^\theta(\bx)\leq -\theta\gamma_bV^\theta(\bx) \text{ if } \|x\|\geq M.
\end{equation}
Define
\begin{equation}\label{e:tau}
\tau=\inf\{t\geq0: \|\BX(t)\|\leq M\}.
\end{equation}
In view of \eqref{et1.1}, we can obtain from Dynkin's formula that
$$
\begin{aligned}
\E_\bx&\left[ \exp\left(\theta\gamma_b(\tau\wedge n^*T^*)\right)V^\theta(\BX(\tau\wedge n^*T^*))\right]\\
&\leq V^\theta(\bx) +\E_\bx \int_0^{\tau\wedge n^*T^*}\exp(\theta\gamma_b s)[\Lom V^\theta(\BX(s))+ \theta\gamma_bV^\theta(\BX(s))]ds\\
&\leq V^\theta(\bx).
\end{aligned}
$$
Thus,
\begin{equation}\label{et1.2}
\begin{aligned}
V^\theta(\bx)\geq&
\E_\bx\left[ \exp\left(\theta\gamma_b(\tau\wedge n^*T^*)\right)V^\theta(\BX(\tau\wedge n^*T^*))\right]\\
=&
 \E_\bx \left[\1_{\{\tau\leq (n^*-1)T^*\}}\exp\left(\theta\gamma_b(\tau\wedge n^*T^*)\right)V^\theta(\BX(\tau\wedge n^*T^*))\right]\\
 &+\E_\bx \left[\1_{\{ (n^*-1)T^*<\tau<n^*T^*\}}\exp\left(\theta\gamma_b(\tau\wedge n^*T^*)\right)V^\theta(\BX(\tau\wedge n^*T^*))\right]\\
&+ \E_\bx \left[\1_{\{\tau\geq n^*T^*\}}\exp\left(\theta\gamma_b(\tau\wedge n^*T^*)\right)V^\theta(\BX(\tau\wedge n^*T^*))\right]\\
\geq&
 \E_\bx \left[\1_{\{\tau\leq (n^*-1)T^*\}}V^\theta(\BX(\tau))\right]\\
 &+\exp\left(\theta\gamma_b (n^*-1)T^*\right)\E_\bx \left[\1_{\{ (n^*-1)T^*<\tau<n^*T^*\}}V^\theta(\BX(\tau))\right]\\
&+\exp\left(\theta\gamma_b n^*T^*\right) \E_\bx \left[\1_{\{\tau\geq n^*T^*\}}V^\theta(\BX(n^*T^*))\right].\\
 \end{aligned}
\end{equation}

By the strong Markov property of $(\BX(t))$ and
Proposition \ref{prop2.1}, we obtain
\begin{equation}\label{et1.3}
\begin{aligned}
\E_\bx&\left[ \1_{\{\tau\leq (n^*-1)T^*\}}V^\theta(\BX(n^*T^*))\right]\\
&\leq
 \E_\bx \left[\1_{\{\tau\leq (n^*-1)T^*\}}\big[K_\theta+e^{-\frac{1}{2}\theta p^*(n^*T^*-\tau)}V^\theta(\BX(\tau))\big]\right]\\
 &\leq K_\theta+ \exp\left(-\frac{1}{2}\theta \rho^*T^*\right)\E_\bx\left[\1_{\{\tau\leq (n^*-1)T^*\}}V^\theta(\BX(\tau))\right]
 \end{aligned}
\end{equation}
By making use again of the strong Markov property of $(\BX(t))$ and
\eqref{e1-lm2.2}, we get
\begin{equation}\label{et1.4}
\begin{aligned}
\E_\bx&\left[ \1_{\{(n^*-1)T^*<\tau<n^*T^*\}}V^\theta(\BX(n^*T^*))\right]\\
&\leq
 \E_\bx \left[\1_{\{(n^*-1)T^*<\tau<n^*T^*\}}e^{\theta H(n^*T^*-\tau)}V^\theta(\BX(\tau))\right]\\
 &\leq \exp(\theta HT^*)\E_\bx\left[\1_{\{(n^*-1)T^*<\tau<n^*T^*\}}\left(V^\theta(\BX(\tau))\right)\right].
 \end{aligned}
\end{equation}
Applying \eqref{et1.3} and \eqref{et1.4} to \eqref{et1.2} yields
\begin{equation}\label{et1.5}
\begin{aligned}
V^\theta(x)
\geq&
 \E_\bx \left[\1_{\{\tau\leq (n^*-1)T^*\}}V^\theta(\BX(\tau))\right]\\
 &+\exp\left(\theta\gamma_b (n^*-1)T^*\right)\E_\bx \left[\1_{\{ (n^*-1)T^*<\tau<n^*T^*\}}V^\theta(\BX(\tau))\right]\\
&+\exp\left(\theta\gamma_b n^*T^*\right) \E_\bx \left[\1_{\{\tau\geq n^*T^*\}}V^\theta(\BX(n^*T^*))\right]\\
\geq& \exp\left(\frac{1}{2}\theta \rho^*T^*\right)\E_\bx\left[\1_{\{\tau\leq (n^*-1)T^*\}}V^\theta(\BX(n^*T^*))\right]-\exp\left(\frac{1}{2}\theta \rho^*T^*\right)K_\theta\\
 &+\exp(-\theta HT^*)\exp\left(\theta\gamma_b (n^*-1)T^*\right)\E_\bx \left[\1_{\{ (n^*-1)T^*<\tau<n^*T^*\}}V^\theta(\BX(n^*T^*))\right]\\
&+\exp\left(\theta\gamma_b n^*T^*\right) \E_\bx \left[\1_{\{\tau\geq n^*T^*\}}V^\theta(\BX(n^*T^*))\right]\\
\geq & \exp( m\theta T^*)\E_\bx V^\theta(\BX(n^*T^*))-K_\theta\exp\left(\frac{1}{2}\theta \rho^*T^*\right)
 \end{aligned}
\end{equation}
where $m=\min\left\{\frac{1}{2} \rho^*, \gamma_b n^*, \gamma_b (n^*-1)- H\right\}>0$ by \eqref{e:n*}.
The proof of \eqref{e:lya} is complete by taking $\kappa=\exp(-m\theta T^*)$ and
\[
\tilde K=K_\theta\exp\left(\frac{1}{2}\theta \rho^*T^*\right)\exp(-m\theta T^*).
\]
By Lemma \ref{lm2.7}, the Markov chain $\{\BX(kn^*T^*):k\in\N\}$ is
irreducible and aperiodic. Moreover, each compact subset of $\R^{n,\circ}_+$ is petite.
Applying the second corollary of \cite[Theorem 6.2]{MT},
we deduce from \eqref{et1.5} that
\begin{equation}\label{e.cxr}
\|P(kn^*T^*,\bx,\cdot)-\pi^*(\cdot)\|_{TV}\leq C_\bx r^k
\end{equation}
  where $\pi^*$ is an invariant probability measure of $\{\BX(kn^*T^*), k\in\N\}$ on $\R^{n,\circ}_+$, for some $r\in(0,1)$ and $C_\bx>0$ a constant depending on $\bx\in\R^{n,\circ}_+$.

On the other hand, it follows from \eqref{et1.5} and \cite[Theorem 6.2]{MT},
that for any compact set $K\subset\R^{n,\circ}_+$,
we have $\E_\bx\tau_K^*<\infty$ where $\tau_K^*$ is the first time the Markov chain $\{\BX(kn^*T^*), k\in\N\}$ enters $K$.
Thus, the process $\BX$ is a positive recurrent diffusion,
or equivalently, $\BX$  has a unique invariant probability measure on $\R^{n,\circ}_+$
(see e.g. \cite[Chapter 4]{RK}).
Because of \eqref{e.cxr}, the unique invariant probability measure of the process $\BX$
must be $\pi^*$.
Moreover, it is well-known that $\|P(t,\bx,\cdot)-\pi^*(\cdot)\|_{TV}$ is decreasing in $t$
(it can be shown easily using the Kolmogorov-Chapman equation).
We therefore obtain an exponential upper bound for $\|P(t,\bx,\cdot)-\pi^*(\cdot)\|_{TV}$.

\end{proof}
\section{Extinction}\label{s:extin}
This section is devoted to the study of conditions under which some of the species will go extinct with strictly positive probability.

\begin{lm}\label{lm4.1}
For any $\mu\in\M$ and $i\in I_\mu$ we have
$\lambda_i(\mu)=0.$
\end{lm}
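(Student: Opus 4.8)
The plan is to identify $\lambda_i(\mu)$ with the almost-sure exponential growth rate $\lim_{t\to\infty}t^{-1}\ln X_i(t)$ computed under the stationary law $\PP_\mu$, and then to use stationarity of the process to conclude that this rate cannot be nonzero. The case $\mu=\bdelta^*$ is vacuous since $I_{\bdelta^*}=\emptyset$, so I assume $\mu\neq\bdelta^*$ and fix $i\in I_\mu$. Two preliminary facts about $\mu$ are needed. First, since $\mu$ is ergodic and, by Lemma \ref{lm2.0}, both $\R^{\mu,\circ}_+$ and $\partial\R^\mu_+$ are invariant for $\BX$, ergodicity forces $\mu(\R^{\mu,\circ}_+)\in\{0,1\}$; the value $0$ is impossible because it would give $\suppo(\mu)\subseteq\partial\R^\mu_+\subsetneq\R^\mu_+$, contradicting $\suppo(\mu)=\R^\mu_+$. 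Hence $\mu(\R^{\mu,\circ}_+)=1$, so $\mu(\{x_i=0\})=0$ for $i\in I_\mu$ and $X_i(0)\in(0,\infty)$ $\PP_\mu$-a.s. Second, Lemma \ref{lm2.3} gives
\[
\int_{\R^n_+}\big(|f_i(\bx)|+g_i^2(\bx)\big)\mu(d\bx)\le\int_{\R^n_+}(1+\bc^\top\bx)^{\delta_0}\Big(1+\sum_j(|f_j(\bx)|+g_j^2(\bx))\Big)\mu(d\bx)\le H_2<\infty,
\]
so $\lambda_i(\mu)$ is finite and both $f_i-\tfrac{\sigma_{ii}}{2}g_i^2$ and $g_i^2$ lie in $L^1(\mu)$.

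Next I would start $\BX$ from $\mu$ and apply It\^o's formula to $\ln X_i$ (legitimate since $X_i(0)>0$ a.s.\ and the process stays in $\R^{\mu,\circ}_+$), obtaining
\[
\frac{\ln X_i(t)}{t}=\frac{\ln X_i(0)}{t}+\frac1t\int_0^t\Big(f_i(\BX(s))-\frac{\sigma_{ii}g_i^2(\BX(s))}{2}\Big)ds+\frac1t\int_0^t g_i(\BX(s))\,dE_i(s).
\]
The first term vanishes a.s.\ because $X_i(0)$ is a.s.\ finite and positive. Since $(\BX(t))_{t\ge0}$ is stationary and ergodic under $\PP_\mu$ (as $\mu$ is an ergodic invariant measure), Birkhoff's ergodic theorem applied to $\bx\mapsto f_i(\bx)-\tfrac{\sigma_{ii}}{2}g_i^2(\bx)\in L^1(\mu)$ shows the second term tends to $\lambda_i(\mu)$ a.s. The martingale $M_i(t):=\int_0^t g_i(\BX(s))\,dE_i(s)$ has quadratic variation $\langle M_i\rangle_t=\sigma_{ii}\int_0^t g_i^2(\BX(s))\,ds$, and the ergodic theorem gives $t^{-1}\langle M_i\rangle_t\to\sigma_{ii}\int g_i^2\,d\mu<\infty$ a.s.; the strong law of large numbers for continuous local martingales then yields $M_i(t)/t\to0$ a.s. Combining the three terms, $\lim_{t\to\infty}t^{-1}\ln X_i(t)=\lambda_i(\mu)$ $\PP_\mu$-a.s.

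Finally I would rule out $\lambda_i(\mu)\neq0$ using that the law of $X_i(t)$ under $\PP_\mu$ equals the $i$-th marginal of $\mu$ for every $t$. If $\lambda_i(\mu)>0$ then $X_i(t)\to\infty$ $\PP_\mu$-a.s., so for the continuous function $\phi_R:\R_+\to[0,1]$ with $\phi_R\equiv1$ on $[0,R]$ and $\phi_R\equiv0$ on $[R+1,\infty)$ we get $\mu(\{x_i\le R\})\le\int\phi_R(x_i)\mu(d\bx)=\E_\mu\phi_R(X_i(t))\to0$, forcing $\mu(\{x_i\le R\})=0$ for all $R>0$, which is impossible for a probability measure. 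If $\lambda_i(\mu)<0$ then $X_i(t)\to0$ $\PP_\mu$-a.s., so by dominated convergence $\int\phi(x_i)\mu(d\bx)=\lim_t\E_\mu\phi(X_i(t))=\phi(0)$ for every $\phi\in C_b(\R_+)$, i.e.\ $\mu(\{x_i=0\})=1$; but $i\in I_\mu$ implies $\R^\mu_+=\suppo(\mu)$ contains points with $x_i>0$, a contradiction. Therefore $\lambda_i(\mu)=0$.

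The step I expect to be the main obstacle is making the two ``soft'' inputs fully rigorous: controlling the rescaled stochastic integral $t^{-1}M_i(t)$, which relies on the $L^1(\mu)$-bound for $g_i^2$ from Lemma \ref{lm2.3} together with the local-martingale law of large numbers, and verifying $\mu(\{x_i=0\})=0$ for $i\in I_\mu$ so that $\ln X_i$ is $\PP_\mu$-a.s.\ finite and It\^o's formula applies --- which is exactly where ergodicity of $\mu$ and the invariance of the coordinate faces (Lemma \ref{lm2.0}) enter.
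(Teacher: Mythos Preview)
Your proof is correct and follows essentially the same approach as the paper: apply It\^o's formula to $\ln X_i$, use the ergodic theorem for the drift integral and the martingale strong law for the stochastic integral to identify the almost-sure growth rate as $\lambda_i(\mu)$, then derive a contradiction if this rate is nonzero. The only minor difference is that you work under the stationary law $\PP_\mu$ and conclude via invariance of the marginal distribution of $X_i(t)$, whereas the paper starts from a fixed $\bx_0\in\R^{\mu,\circ}_+$ and concludes by noting that $X_i(t)$ can tend to neither $0$ nor $\infty$; your endgame is arguably cleaner and your verification that $\mu(\R^{\mu,\circ}_+)=1$ makes explicit a point the paper leaves implicit.
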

\begin{proof}
In view of It\^o's formula,
$$
\dfrac{\ln X_i(t)}t=\dfrac{\ln X_i(0)}t+\dfrac1t\int_0^t\left[f_i(\BX(s))-\dfrac{g_i^2(\BX(s)\sigma_{ii}}2\right]ds+\dfrac1t\int_0^t g_i(\BX(s))dE_i(s).
$$
In the same manner as in the second part of the proof of Lemma \ref{lm2.3},
we can show that if $\BX(0)=\bx_0\in\R_+^{\mu,\circ}$ and $i\in I_\mu$,
then
$$
\lim_{t\to\infty}\dfrac1t\int_0^t\left[f_i(\BX(s))-\dfrac{g_i^2(\BX(s)\sigma_{ii}}2\right]ds=\lambda_i(\mu)~~~\PP_{\bx_0}\text{-  a.s.}
$$
and
$$\lim_{t\to\infty}\dfrac1t\int_0^tg_i(\BX(s))dE_i(s)=0~~~\PP_{\bx_0}\text{-  a.s.}
$$
On the other hand, $X_i(t), i\in I_\mu$ can go to neither $0$ nor $\infty$ as $t\to\infty$. Thus
\[
\lim_{t\to\infty}\dfrac{\ln X_i(t)}t = 0, ~~~\PP_{\bx_0}\text{-  a.s.}, i \in I_\mu
\]
which implies the desired result.
\end{proof}

Condition \eqref{ae3.2} is equivalent to the existence of
$0<\hat p_i<\delta_0, i\in I_\mu$
such that for any $\nu\in\M_\mu$ , we have
$$\sum_{i\in I_\mu}\hat p_i\lambda_i(\nu)>0.$$
Thus, there is $\check p\in (0,\delta_0)$ sufficiently small such that
\begin{equation}\label{e3.2}
\begin{aligned}
\sum_{i\in I_\mu}\hat p_i\lambda_i(\nu)-\check p\max_{i\in I_\mu^c}\{\lambda_i(\nu)\}>0 \text{ for any }\nu\in\M_\mu
\end{aligned}
\end{equation}
In view of \eqref{e3.2}, \eqref{ae3.1} and Lemma \ref{lm4.1}, there is $\rho_e>0$ such that for any $\nu\in\M_\mu\cup\{\mu\}$,
\begin{equation}\label{e3.3}
\sum_{i\in I_\mu}\hat p_i\lambda_i(\nu)-\check p\max_{i\in I_\mu^c}\left\{\lambda_i(\nu)\right\}>3\rho_e.
\end{equation}
\begin{lm}\label{lm4.2}
Suppose that Assumption \ref{a.extn} holds.
Let $M$ be as in \eqref{e2.2}, $H$ as in \eqref{e:H} and $\hat p_i, \check p, \rho_e$ as in \eqref{e3.3}. Let $n_e\in\N$ such that $\gamma_b(n_e-1)>H$.
There are $T_e\geq 0$, $\delta_e>0$ such that, for any $T\in[T_e,n_eT_e]$, $\|\bx\|\leq M, x_i<\delta_e, i\in I_\mu^c$, we have
\begin{equation}\label{e3.4}
\begin{aligned}
\dfrac1T&\int_0^T\E_\bx\left( \dfrac{\sum_ic_iX_i(t)f_i(\BX(t))}{1+\bc^\top\BX(t)}- \dfrac{\sum_{i,j} c_ic_jX_i(t)X_j(t)g_i(\BX(t))g_j(\BX(t))\sigma_{ij}}{2(1+\bc^\top\BX(t))^2}\right)dt\\
&-\sum_{i\in I_\mu}\hat p_i\dfrac1T\int_0^T\E_\bx\left(f_i(\BX(t))-\dfrac{\sigma_{ii}g^2_i(\BX(t))}{2} \right)dt\\
&+\check p\max_{i\in I_\mu^c}\left\{\dfrac1T\int_0^T\E_\bx\left(f_i(\BX(t))-\dfrac{\sigma_{ii}g^2_i(\BX(t))}{2} \right)dt\right\}
\leq -\rho_e.
\end{aligned}
\end{equation}
\end{lm}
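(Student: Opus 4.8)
The plan is to prove \eqref{e3.4} first for initial points on the face $\R_+^\mu$ itself, and for \emph{all} sufficiently large $T$, by a contradiction argument in the spirit of Lemma \ref{lm3.1}; and then to transfer the bound to a $\delta_e$-neighbourhood of $\R_+^\mu$ by a compactness argument, which is where the boundedness of the interval $[T_e,n_eT_e]$ enters essentially.

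\emph{Step 1 (the estimate on $\R_+^\mu$).} First I would establish that there is a $T_e\ge1$ such that the left-hand side of \eqref{e3.4} is $\le-2\rho_e$ for every $\bx\in\R_+^\mu$ with $\|\bx\|\le M$ and every $T\ge T_e$. By Lemma \ref{lm2.0} a solution started in $\R_+^\mu$ stays in $\R_+^\mu$, so $X_i\equiv0$ for $i\in I_\mu^c$ and this is precisely \eqref{e3.4} read on that face. If the claim failed there would be $\bx_k\in\R_+^\mu$, $\|\bx_k\|\le M$, and $T_k\to\infty$ along which the left-hand side of \eqref{e3.4} exceeds $-2\rho_e$. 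Exactly as in the tightness step of Lemma \ref{lm3.1} (the Tonelli identity \eqref{e:Ton} together with \eqref{e2-lm2.2}), the family $\{\Pi^{\bx_k}_{T_k}\}$ is tight, so along a subsequence it converges weakly to an invariant probability measure $\nu$; since each $\Pi^{\bx_k}_{T_k}$ is carried by the closed set $\R_+^\mu$, so is $\nu$, hence in particular $\nu$ is carried by $\partial\R_+^n$. Applying Lemma \ref{lm2.4} to $\bx\mapsto\frac{\sum_ic_ix_if_i(\bx)}{1+\bc^\top\bx}-\frac12\frac{\sum_{i,j}\sigma_{ij}c_ic_jx_ix_jg_i(\bx)g_j(\bx)}{(1+\bc^\top\bx)^2}-\sum_{i\in I_\mu}\hat p_i\big(f_i(\bx)-\tfrac{\sigma_{ii}}2g_i^2(\bx)\big)$ and, separately, to each $f_i-\tfrac{\sigma_{ii}}2g_i^2$ with $i\in I_\mu^c$ (all of these obey the growth bound of Lemma \ref{lm2.4} with exponent $0<\delta_0$), and using Lemma \ref{lm2.3} to annihilate the first two terms, the left-hand side of \eqref{e3.4} converges along the subsequence to $-\big(\sum_{i\in I_\mu}\hat p_i\lambda_i(\nu)-\check p\max_{i\in I_\mu^c}\lambda_i(\nu)\big)$. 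Decomposing $\nu$ into ergodic components — each of which is either $\mu$ (the unique ergodic invariant measure charging $\R_+^{\mu,\circ}$, by irreducibility of the process on that face, Lemma \ref{lm2.7}) or a member of $\M_\mu$ — and combining \eqref{e3.3} with the elementary inequality $\max_{i\in I_\mu^c}\int\lambda_i\,d\Theta\le\int\max_{i\in I_\mu^c}\lambda_i\,d\Theta$ for the mixing measure $\Theta$, one gets $\sum_{i\in I_\mu}\hat p_i\lambda_i(\nu)-\check p\max_{i\in I_\mu^c}\lambda_i(\nu)>3\rho_e$. Thus the limit is $<-3\rho_e<-2\rho_e$, a contradiction.

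\emph{Step 2 (transfer off the face).} Fix $T_e$ as in Step 1. One checks, using the continuous dependence of $\BX^\bx$ on $\bx$ (locally Lipschitz coefficients and no explosion, Lemma \ref{lm2.0}), the continuity of the $f_i,g_i$, the moment estimates of Lemma \ref{lm2.2}, and the continuity of $\max$, that the left-hand side of \eqref{e3.4}, viewed as a function of $(\bx,T)$, is continuous on the compact set $\{\bx\in\R_+^n:\|\bx\|\le M\}\times[T_e,n_eT_e]$, hence uniformly continuous there, with some modulus $\omega$. Given $\bx$ with $\|\bx\|\le M$ and $x_i<\delta_e$ for $i\in I_\mu^c$, let $\bx'$ be $\bx$ with its $I_\mu^c$-coordinates set to $0$; then $\bx'\in\R_+^\mu$, $\|\bx'\|\le M$ and $\|\bx-\bx'\|\le n\delta_e$, so for every $T\in[T_e,n_eT_e]$ the left-hand side of \eqref{e3.4} at $\bx$ is at most its value at $\bx'$ plus $\omega(n\delta_e)$, that is at most $-2\rho_e+\omega(n\delta_e)$ by Step 1. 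Choosing $\delta_e$ so small that $\omega(n\delta_e)\le\rho_e$ gives \eqref{e3.4}.

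I expect the crux to be Step 1, and within it the point — identical to Lemma \ref{lm3.1} — that the subsequential weak limit is an invariant probability measure supported exactly where \eqref{ae3.1} and \eqref{e3.3} are available, namely on $\R_+^\mu$; this is why Step 1 must be run on the face and why Step 2 (hence the bounded range of $T$ in the statement) is needed to reach nearby $\bx$. The remaining ingredients — tightness via \eqref{e2-lm2.2}, passage to the limit via Lemma \ref{lm2.4}, and the convexity inequality for $\Theta$ — are the bookkeeping already carried out for Lemma \ref{lm3.1}. (If $\R_+^\mu=\{\0\}$, Step 1 is immediate, since the left-hand side of \eqref{e3.4} then equals $\check p\max_i\lambda_i(\bdelta^*)$, which is $<-3\rho_e$ by \eqref{ae3.1} and \eqref{e3.3}.)
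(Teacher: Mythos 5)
Your proof is correct and takes the same two-step route the paper takes: first establish the estimate with margin $-2\rho_e$ on the face $\R_+^\mu$ for all $T\geq T_e$ by the Lemma~\ref{lm3.1}-style tightness/weak-limit contradiction (with the ergodic decomposition and the convexity of $\max$ handling the non-ergodic limit, exactly as \eqref{e3.3} requires), and then transfer to a $\delta_e$-neighbourhood using the Feller property together with compactness of $\{\bx:\|\bx\|\leq M\}\times[T_e,n_eT_e]$. The paper compresses this into two sentences ("analogous to Lemma~\ref{lm3.1}, using \eqref{e3.3}\dots by the Feller property and compactness\dots"); you have simply made explicit what those sentences assert.
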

\begin{proof}
Analogous to Lemma \ref{lm3.1}, using \eqref{e3.3}, one can show there exists a $T_e>0$ such that for any $T>T_e, \bx\in\R_+^\mu, \|\bx\|\leq M$, we have
\begin{equation}
\begin{aligned}
\dfrac1T&\int_0^T\E_\bx\left( \dfrac{\sum_ic_iX_i(t)f_i(\BX(t))}{1+\bc^\top\BX(t)}- \dfrac{\sum_{i,j} c_ic_jX_i(t)X_j(t)g_i(\BX(t))g_j(\BX(t))\sigma_{ij}}{2(1+\bc^\top\BX(t))^2}\right)dt\\
&-\sum_{i\in I_\mu}\hat p_i\dfrac1T\int_0^T\E_\bx\left(f_i(\BX(t))-\dfrac{\sigma_{ii}g^2_i(\BX(t))}{2} \right)dt\\
&+\check p\max_{i\in I_\mu^c}\left\{\dfrac1T\int_0^T\E_\bx\left(f_i(\BX(t))-\dfrac{\sigma_{ii}g^2_i(\BX(t))}{2} \right)dt\right\}
\leq -2\rho_e
\end{aligned}
\end{equation}
By the Feller property of $(\BX(t))$ and compactness of the set $\{\bx\in\R_+^\mu, \|\bx\|\leq M\}$, there is a $\delta_e>0$ such that
for any $T\in[T_e,n_eT_e]$, $\|\bx\|\leq M, x_i<\delta_e, i\in I_\mu^c$ , the estimate
\eqref{e3.4} holds.
\end{proof}
\begin{prop}\label{prop4.1}
Suppose that Assumption \ref{a.extn} holds.
Let $\delta_0>0$ be as in \eqref{e2.3}.
There is a $\theta\in(0,\delta_0)$ such that for any $T\in[T_e,n_eT_e]$ and $\bx\in\R^{n,\circ}_+$ satisfying $ \|\bx\|\leq M,$ $x_i<\delta_e,$ $i\in I_\mu^c$ one has
$$\E_\bx U_\theta(\BX(T))\leq \exp\left(-\frac{1}{2}\theta \rho_eT\right) U_\theta(\bx)$$
where
$M, T_e, \hat p_i,\check p, \delta_e, n_e$ are as in Lemma \ref{lm4.2} and
$$U_\theta(\bx)=\sum_{i\in I_\mu^c}\left[(1+\bc^\top\bx)\dfrac{x_i^{\check p}}{\prod_{j\in I_\mu} x_j^{\hat p_j}}\right]^\theta, \bx\in\R^{n,\circ}_+.$$
\end{prop}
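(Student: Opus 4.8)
The plan is to mimic the structure of the proof of Proposition \ref{prop2.1}, replacing the single Lyapunov function $V$ by the sum $U_\theta$ indexed over $i\in I_\mu^c$. First I would fix $i\in I_\mu^c$ and set $V_i(\bx):=(1+\bc^\top\bx)\dfrac{x_i^{\check p}}{\prod_{j\in I_\mu}x_j^{\hat p_j}}$, so that $U_\theta=\sum_{i\in I_\mu^c}V_i^\theta$, and apply It\^o's formula to $\ln V_i(\BX(t))$. This yields $\ln V_i(\BX(T))=\ln V_i(\bx)+G_i(T)$, where $G_i(T)$ has a drift part which is precisely the integrand appearing on the left-hand side of \eqref{e3.4} (without the $\max$: the $i$-th term among the $I_\mu^c$ indices), plus a martingale part consisting of stochastic integrals against the $E_j$'s. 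Because $\check p,\hat p_j\in(0,\delta_0)$ and $|I_\mu^c|+|I_\mu|$ is bounded, the exponents multiplying the various $x_j$ and $(1+\bc^\top\bx)$ factors stay in a fixed bounded range, so the same estimates used in Lemma \ref{lm2.0} and in the derivation of \eqref{e3.4_2}, \eqref{e3.5} give two-sided exponential bounds $\E_\bx\exp(\pm\delta_0 G_i(T))\leq C(1+\bc^\top\bx)^{2\delta_0}\exp(\delta_0 H T)$ for $\|\bx\|\leq M$, uniformly over $T\in[T_e,n_eT_e]$.

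Next I would invoke Lemma \ref{lm2.5} for the random variable $G_i(T)$: the two-sided bound above verifies its hypothesis with $\theta_0=\delta_0$, so the log-Laplace transform $\phi_{i,\bx,T}(\theta):=\ln\E_\bx\exp(\theta G_i(T))$ is twice differentiable on $[0,\delta_0/2)$ with $\phi_{i,\bx,T}'(0)=\E_\bx G_i(T)$ and $0\le\phi_{i,\bx,T}''(\theta)\le K_2$ for a constant $K_2$ independent of $i$, $\bx$ (with $\|\bx\|\le M$) and $T\in[T_e,n_eT_e]$. By Lemma \ref{lm4.2}, for $x_j<\delta_e$, $j\in I_\mu^c$, we have $\E_\bx G_i(T)\le -\rho_e T$ for \emph{every} $i\in I_\mu^c$, because the $i$-th term is dominated by the maximum that appears in \eqref{e3.4}. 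A Taylor expansion then gives $\phi_{i,\bx,T}(\theta)\le-\rho_e T\theta+K_2\theta^2$, and choosing $\theta\in(0,\delta_0/2)$ with $\theta<\rho_e T_e/(2K_2)$ yields $\phi_{i,\bx,T}(\theta)\le-\tfrac12\rho_e T\theta$, i.e.
\[
\E_\bx V_i^\theta(\BX(T))=V_i^\theta(\bx)\exp\phi_{i,\bx,T}(\theta)\le V_i^\theta(\bx)\exp\Bigl(-\tfrac12\theta\rho_e T\Bigr).
\]
Summing over $i\in I_\mu^c$ gives $\E_\bx U_\theta(\BX(T))\le\exp\bigl(-\tfrac12\theta\rho_e T\bigr)U_\theta(\bx)$, which is the claim.

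The one genuinely delicate point is making sure the drift of $\ln V_i(\BX(t))$ really matches the integrand in \eqref{e3.4} with the correct signs, in particular that the $\check p$-power of $x_i$ (with $i\in I_\mu^c$) contributes $+\check p(f_i-\tfrac{\sigma_{ii}}{2}g_i^2)$ to the drift while the $\hat p_j$-powers contribute $-\hat p_j(f_j-\tfrac{\sigma_{jj}}{2}g_j^2)$, and that the quadratic-variation cross terms coming from It\^o's formula are absorbed into the $(1+\bc^\top\bx)$ factor exactly as in \eqref{e2.1}; this is why $\delta_0$ was chosen via \eqref{e2.3} and \eqref{e2.1} to begin with. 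A second, more bookkeeping-type obstacle is uniformity: one must check that $K_2$ (from Lemma \ref{lm2.5}) and the constants in the two-sided exponential bounds can be taken independent of which $i\in I_\mu^c$ we picked and of $T\in[T_e,n_eT_e]$ — this follows because there are only finitely many choices of $i$ and the coefficient bounds in Lemma \ref{lm2.2} are uniform, but it should be stated carefully. Everything else is a routine adaptation of the argument already given for Proposition \ref{prop2.1}.
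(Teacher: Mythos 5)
Your proposal is correct and follows essentially the same route as the paper's own (very terse) proof: decompose $U_\theta=\sum_{i\in I_\mu^c}U_i^\theta$, run the Proposition~\ref{prop2.1} argument (It\^o on $\ln U_i$, two-sided exponential moment bounds, Lemma~\ref{lm2.5} with a Taylor expansion in $\theta$, using Lemma~\ref{lm4.2} in place of Lemma~\ref{lm3.1}) to get the exponential contraction for each $U_i^\theta$, and then sum over $i$. The paper leaves these steps to the reader with the phrase ``similarly to Proposition~\ref{prop2.1}''; you have simply filled them in correctly, including the key observation that the $\check p$-term in the drift of $\ln U_i$ is dominated by the $\check p\max$ in \eqref{e3.4}.
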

\begin{proof}
For $i\in I_\mu^c$, let
$U_i(\bx)=(1+\bc^\top\bx)\dfrac{x_i^{\check p}}{\prod_{j\in I_\mu} x_j^{\hat p_j}}.$
Similarly to Proposition \ref{prop2.1}, by making use of Lemma \ref{lm4.2}, one can find a $\theta>0$ such that for $T\in[T_e,n_eT_e]$, $\bx\in\R^{n,\circ}_+$ with $\|\bx\|\leq M,$ and $x_i<\delta_e$ we have
$$\E U_i^\theta(\BX(T))\leq \exp\left(-\frac{1}{2}\theta \rho_eT\right) U_i^\theta(\bx).$$
The proof is complete by noting that
$$U_\theta(\bx)=\sum_{i\in I_\mu^c}U_i^\theta(\bx).$$
\end{proof}

\begin{lm}\label{lm3.3}
Let $H$ be defined by \eqref{e:H}. For $\theta\in[0,\delta_0]$ we have
$$\E_\bx U_\theta(\BX(t))\leq \exp(\theta Ht) U_\theta(\bx)\,,\, \bx\in\R^{n,\circ}_+.$$
\end{lm}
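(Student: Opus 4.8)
The plan is to reduce to a term-by-term estimate. Since $U_\theta=\sum_{i\in I_\mu^c}U_i^\theta$ with $U_i(\bx)=(1+\bc^\top\bx)\,x_i^{\check p}\prod_{j\in I_\mu}x_j^{-\hat p_j}$, it suffices to prove $\E_\bx U_i^\theta(\BX(t))\le\exp(\theta Ht)U_i^\theta(\bx)$ for each $i\in I_\mu^c$ and then sum. The crux is that each $U_i$ has exactly the structural shape of the function $V$ in \eqref{e:V}: a numerator $1+\bc^\top\bx$ divided by a monomial $\prod_k x_k^{q_k}$ whose exponent vector $q=q^{(i)}$ satisfies $\sum_k|q_k|=\check p+\sum_{j\in I_\mu}\hat p_j\le\delta_0$ (we may assume the weights in \eqref{e3.2}--\eqref{e3.3} are normalized this way, since rescaling the $\hat p_j$ and shrinking $\check p$ only alters the value of $\rho_e$). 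The only difference from $V$ is that one exponent, $q_i=-\check p$, is negative; this will be irrelevant because $H$ in \eqref{e:H} is built from the absolute values $|f_k|$ and $|g_kg_l\sigma_{kl}|$.

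Next I would repeat the generator computation behind \eqref{e1-lm2.2} with $U_i$ in place of $V$. By It\^o's formula $d\ln U_i(\BX(t))=\Psi_i(\BX(t))\,dt+dN_i(t)$, where $N_i$ is a local martingale whose pointwise quadratic-variation density $q_i(\bx)\ge0$ is a nonnegative quadratic form in $(g_k(\bx))_k$ (nonnegativity by Assumption \ref{a.nonde}, part (1)), and
\[
\Psi_i(\bx)=\frac{\sum_k c_kx_kf_k(\bx)}{1+\bc^\top\bx}-\frac12\frac{\sum_{k,l}\sigma_{kl}c_kc_lx_kx_lg_k(\bx)g_l(\bx)}{(1+\bc^\top\bx)^2}+\check p\Big(f_i(\bx)-\tfrac{\sigma_{ii}}2g_i^2(\bx)\Big)-\sum_{j\in I_\mu}\hat p_j\Big(f_j(\bx)-\tfrac{\sigma_{jj}}2g_j^2(\bx)\Big).
\]
Applying It\^o to $U_i^\theta=\exp(\theta\ln U_i)$ gives $\Lom U_i^\theta(\bx)=\theta U_i^\theta(\bx)\big(\Psi_i(\bx)+\tfrac\theta2 q_i(\bx)\big)$. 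Since $\theta\le\delta_0$ and $q_i\ge0$, it remains to check $\Psi_i(\bx)+\tfrac{\delta_0}2q_i(\bx)\le H$ for all $\bx\in\R^{n,\circ}_+$. This is precisely the estimate already carried out inside the proof of Lemma \ref{lm2.0}: bound $|\check p f_i|$ and $|\hat p_j f_j|$ by $\delta_0\sum_k|f_k|$ using $\sum_k|q_k|\le\delta_0$, bound the cross terms produced by expanding $q_i$ via $|g_k(\bx)g_l(\bx)\sigma_{kl}|\le 2|\sigma_{kl}|(g_k^2(\bx)+g_l^2(\bx))$ and \eqref{e2.3}, and collect the remaining $g^2$ terms; what is left is exactly the quantity inside the supremum defining $H$ in \eqref{e:H}, hence is $\le H$. (The sign of $q_i=-\check p$ is harmless because only $|q_k|=\check p$ enters these bounds.) Therefore $\Lom U_i^\theta(\bx)\le\theta H U_i^\theta(\bx)$ on $\R^{n,\circ}_+$.

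Finally I would pass from this differential inequality to the asserted bound by localization. For $m\in\N$ set $\tau_m=\inf\{t\ge0:\|\BX(t)\|\ge m\ \text{or}\ \min_kX_k(t)\le 1/m\}$; by Lemma \ref{lm2.0} the solution is nonexplosive and never leaves $\R^{n,\circ}_+$, so $\tau_m\uparrow\infty$ a.s. On $[0,t\wedge\tau_m]$ the process $U_i^\theta(\BX(\cdot))$ is bounded, so Dynkin's formula together with $\Lom U_i^\theta\le\theta H U_i^\theta$ shows that $s\mapsto e^{-\theta Hs}U_i^\theta(\BX(s\wedge\tau_m))$ is a supermartingale, whence $\E_\bx[e^{-\theta H(t\wedge\tau_m)}U_i^\theta(\BX(t\wedge\tau_m))]\le U_i^\theta(\bx)$. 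Letting $m\to\infty$ and using Fatou's lemma gives $\E_\bx U_i^\theta(\BX(t))\le e^{\theta Ht}U_i^\theta(\bx)$, and summing over $i\in I_\mu^c$ completes the proof. I do not expect a genuine obstacle here: the argument is a near-copy of the proof of Lemma \ref{lm2.0}. The only points needing attention are verifying that the negative exponent $q_i=-\check p$ does not spoil the pointwise bound $\Psi_i+\tfrac\theta2 q_i\le H$ (it does not, since only $|q_k|$ enters), and the routine integrability/localization step, which is needed because $U_i^\theta$ and the associated stochastic integrals are not a priori integrable.
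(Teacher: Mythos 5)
Your proof is correct and follows essentially the same route the paper takes: reduce to each $U_i$, observe that $U_i$ has the same form as $V$ up to a sign change in one exponent, note that all the generator estimates from the proof of \eqref{e1-lm2.2} depend only on $|p_k|$ so the negative exponent is harmless (this is exactly the paper's Remark after the lemma), and then sum over $i\in I_\mu^c$. The only difference is that you spell out the localization/Dynkin step by hand, whereas the paper relegates it to the cited Khasminskii-type theorem used in Lemma \ref{lm2.0}; both are fine.
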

\begin{proof}
By the arguments from the proof of \eqref{e1-lm2.2}, for $\theta\leq\delta_0$, $i\in I_\mu^c$
we have
$$\E_\bx U_i^{\theta}(\BX(t))\leq \exp(\theta Ht) U_i^{\theta}(\bx)\,, \bx\in\R^{n,\circ}_+.$$
From this estimate, we can take the sum over $I_\mu^c$ to obtain the desired result.
\end{proof}
\begin{rmk}
It is key to note that the inequalities \eqref{e2.4} and \eqref{e2.5} hold if $|p_i|<\delta_0$ no matter if the $p_i$'s are negative or positive. This then allows us to have the same kind of estimates for $U_\theta$ and $V_\theta$.
\end{rmk}
\begin{thm}\label{thm4.1}
Under Assumptions \ref{a.nonde} and  \ref{a.extn} for any $\delta<\delta_0$
and any $\bx\in\R^{n,\circ}_+$ we have
\begin{equation}\label{e.extinction}
\lim_{t\to\infty}\E_\bx \bigwedge_{i=1}^n X_i^{\delta}(t)=0,
\end{equation}
where $\bigwedge_{i=1}^n a_i=\min_{i=1,\dots,n}\{a_i\}.$

\end{thm}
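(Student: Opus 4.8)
The plan is to localize a supermartingale near the ``sink'' $\R_+^{\mu,\circ}$ in order to exclude recurrence of $\BX$ in the open orthant, and then to use the recurrence/transience dichotomy for the non-degenerate diffusion $\BX$ to force $\BX(t)$ onto $\partial\R^n_+$. First I would fix a $\mu\in\M$ satisfying Assumption~\ref{a.extn} and carry along the constants $\hat p_i,\check p,\rho_e$ from \eqref{e3.3}, the function $U_\theta$, and the numbers $M,T_e,\delta_e,n_e$ from Proposition~\ref{prop4.1}. Iterating Proposition~\ref{prop4.1} along the sampling times $kn_eT_e$ --- using Lemma~\ref{lm3.3} to absorb the excursions of $U_\theta$ between these times, and the tightness estimates of Lemma~\ref{lm2.2} (which quantify part~(3) of Assumption~\ref{a.nonde}) to absorb the excursions of $\|\BX\|$ to large values, exactly as the proof of Theorem~\ref{thm3.1} marries the boundary contraction with the tightness mechanism --- I expect to obtain a neighborhood $\mathcal N$ of $\R_+^{\mu,\circ}$ in $\R^{n,\circ}_+$, a smaller neighborhood $\mathcal N'$ (of the form $\{U_\theta<\beta\}$ intersected with a suitable set), and a constant $p_0>0$, so that $\PP_\bx\big(\lim_{t\to\infty}X_i(t)=0\ \text{for all }i\in I_\mu^c\big)\ge p_0$ whenever $\BX(0)=\bx\in\mathcal N'$; on the trapping event $U_\theta(\BX(t))\to0$, and the passage to $X_i(t)\to0$ also uses the a.s.\ bound $\limsup_t\|\BX(t)\|<\infty$ (itself a consequence of part~(3) of Assumption~\ref{a.nonde}).

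Next I would show that $\BX$ started at any $\bx\in\R^{n,\circ}_+$ is \emph{transient} in $\R^{n,\circ}_+$. If it were recurrent, then by the irreducibility and petiteness of compact sets (Lemma~\ref{lm2.7}) it would visit the open set $\mathcal N'$ at an increasing sequence of a.s.\ finite stopping times $\sigma_k\uparrow\infty$; since the event $A=\{X_i(t)\to0\ \forall i\in I_\mu^c\}$ is shift-invariant, the strong Markov property at $\sigma_k$ together with the previous step gives $\PP_\bx(A\mid\mathcal F_{\sigma_k})\ge p_0$ on $\{\sigma_k<\infty\}$, hence $\PP_\bx(A)\ge p_0>0$. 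But on $A$ the process eventually avoids the nonempty open set $\{\bx:x_{i^*}>\delta_e\}$ for any $i^*\in I_\mu^c$, which contradicts recurrence. Therefore $\BX$ is transient: it leaves every compact subset of $\R^{n,\circ}_+$ eventually, $\PP_\bx$-a.s. (In particular $\BX$ has no invariant probability measure on $\R^{n,\circ}_+$.)

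Combining transience with $\limsup_{t\to\infty}\|\BX(t)\|<\infty$ a.s., the process can only accumulate on the finite boundary: for each $\varepsilon,R>0$ it eventually leaves the compact set $\{\bx:\dist(\bx,\partial\R^n_+)\ge\varepsilon,\ \|\bx\|\le R\}$, and since $\|\BX(t)\|$ stays bounded this forces $\dist(\BX(t),\partial\R^n_+)=\bigwedge_{i=1}^n X_i(t)\to0$, $\PP_\bx$-a.s. Finally, since $\delta<\delta_0$, Lemma~\ref{lm2.2} gives $\sup_{t\ge0}\E_\bx\|\BX(t)\|^{\delta_0}<\infty$, so the family $\{\bigwedge_{i=1}^n X_i^\delta(t)\}_{t\ge0}$ (dominated by $\|\BX(t)\|^\delta$) is uniformly integrable; together with the a.s.\ convergence this yields $\lim_{t\to\infty}\E_\bx\bigwedge_{i=1}^n X_i^\delta(t)=0$, which is \eqref{e.extinction}.

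The main obstacle is the first step. Because $\R_+^{\mu,\circ}$ is non-compact while the contraction in Proposition~\ref{prop4.1} is available only on $\{\|\bx\|\le M,\ x_i<\delta_e\ \forall i\in I_\mu^c\}$, one cannot simply iterate it: the supermartingale argument must be run on an unbounded neighborhood of $\R_+^{\mu,\circ}$ with the large-$\|\bx\|$ excursions controlled simultaneously through the tightness estimates --- the same coupling of $U_\theta$ with the Lyapunov bound at infinity as in the proof of Theorem~\ref{thm3.1}, but now arranged so that the process falls \emph{into} the boundary rather than away from it.
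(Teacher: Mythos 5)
Your supermartingale/transience skeleton is the same as the paper's: build $U_\theta$ from Proposition~\ref{prop4.1}, truncate to $\tilde U_\theta=\varsigma\wedge U_\theta$, run a sampled supermartingale at times $kn_eT_e$ (absorbing the short excursions via Lemma~\ref{lm3.3} and the far-field excursions via the Lyapunov estimate $\Lom U_\theta\le-\theta\gamma_bU_\theta$ for $\|\bx\|\ge M$), and use the strong Markov property plus irreducibility to rule out recurrence. Up to this point you are reproducing the paper's route. The problem is the passage from transience to the conclusion \eqref{e.extinction}.

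You invoke ``$\limsup_{t\to\infty}\|\BX(t)\|<\infty$ a.s., itself a consequence of part~(3) of Assumption~\ref{a.nonde}'' twice: once to pass from $U_\theta(\BX(t))\to0$ to $X_i(t)\to0$ (the denominator $\prod_{j\in I_\mu}X_j^{\hat p_j}$ in $U_\theta$ must not blow up), and once to upgrade transience to $\bigwedge_i X_i(t)\to0$ a.s. Neither use is justified. Assumption~\ref{a.nonde}(3) only yields the moment bound of Lemma~\ref{lm2.2}, i.e.\ $\sup_t\E_\bx(1+\bc^\top\BX(t))^{\delta_0}<\infty$; this gives \emph{tightness} of $P(t,\bx,\cdot)$ but does not give pathwise boundedness, and for a mean-reverting diffusion one typically expects $\limsup_t\|\BX(t)\|=\infty$ a.s.\ (as for an OU process). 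Consequently the claimed a.s.\ convergence $\bigwedge_iX_i(t)\to0$ is not established, and it is in fact stronger than what Theorem~\ref{thm4.1} asserts --- the theorem claims only $L^\delta$-type decay of $\bigwedge_iX_i^\delta(t)$, not a.s.\ pathwise extinction.

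The correct bridge (and the one the paper uses) is distributional, not pathwise: transience shows that $\BX$ a.s.\ eventually leaves every compact subset of $\R^{n,\circ}_+$, hence $P(t,\bx,K)\to0$ for all such $K$; combined with the tightness from Lemma~\ref{lm2.2}, every weak$^*$ subsequential limit $\pi$ of $P(t_k,\bx,\cdot)$ is supported on $\partial\R^n_+$, and $\int\bigwedge_i x_i^\delta\,d\pi=0$. The uniform moment bound for $\delta<\delta_0$ then justifies the passage of the integral to the limit (exactly the truncation argument of Lemma~\ref{lm2.4}), giving \eqref{e.extinction}. Your uniform-integrability observation is the right ingredient for this last step; you should couple it with convergence in probability of $\bigwedge_iX_i(t)$ to $0$ (which \emph{does} follow from transience plus tightness) rather than with a.s.\ convergence. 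Replacing the unjustified a.s.\ bound with this weak-convergence argument closes the gap; otherwise your third step does not go through.
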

\begin{proof}
Just as in \eqref{et1.1}, we have 
\begin{equation}\label{et3.1}
\Lom U_\theta(\bx)\leq -\theta\gamma_bU_\theta(\bx) \text{ if } \|x\|\geq M.
\end{equation}
Let
$$\varsigma:=\dfrac{\delta_e^{\check p\theta}}{C_U^\theta},$$
$$C_U:=\sup\left\{\dfrac{\prod_{i\in I_\mu} x_i^{\hat p_i}}{1+\bc^\top\bx}: \bx\in\R^{n,\circ}_+\right\}<\infty,$$
and
$$\xi:=\inf\left\{t\geq0: U^\theta(\BX(t))\geq \varsigma\right\}.$$
Clearly, if $U_\theta(\bx)<\varsigma$, then $\xi>0$ and
for any $i\in I_\mu^c$, we get
\begin{equation}\label{e:ine}
X_i(t)\leq \delta_e\,, t\in [0,\xi).
\end{equation}
Let $$\tilde U_\theta(\bx):=\varsigma\wedge U_\theta(\bx).$$
We have from the concavity of $x\mapsto x\wedge \varsigma$ that
$$\E_\bx \tilde U_\theta(\BX(T))\leq\varsigma\wedge \E U_\theta(\BX(T)).$$
Let $\tau$ be defined by \eqref{e:tau}. By \eqref{et3.1} and Dynkin's formula, we have that
$$
\begin{aligned}
\E_\bx&\left[ \exp\left(\theta\gamma_b(\tau\wedge\xi\wedge n_eT_e)\right)U_\theta(\BX(\theta\gamma_b(\tau\wedge\xi\wedge n_eT_e))\right]\\
&\leq U_\theta(\bx) +\E_\bx \int_0^{\theta\gamma_b(\tau\wedge\xi\wedge n_eT_e)}\exp(\theta\gamma_b s)[\Lom U_\theta(\BX(s))+ \theta\gamma_bU_\theta(\BX(s))]ds\\
&\leq U_\theta(\bx).
\end{aligned}
$$
As a result,
\begin{equation}\label{et3.3}
\begin{aligned}
U_\theta(x)\geq&
\E_\bx\left[ \exp\left(\theta\gamma_b(\tau\wedge\xi\wedge n_eT_e)\right)U_\theta(\BX(\tau\wedge\xi\wedge n_eT_e))\right]\\
\geq& \E_\bx \left[\1_{\{\tau\wedge\xi\wedge(n_e-1)T_e=\tau\}}U_\theta(\BX(\tau))\right]\\
&+ \E_\bx \left[\1_{\{\tau\wedge\xi\wedge(n_e-1)T_e=\xi\}}U_\theta(\BX(\xi))\right]\\
 &+\exp\left(\theta\gamma_b (n_e-1)T_e\right) \E_\bx \left[\1_{\{(n_e-1)T_e<\tau\wedge\xi<n_eT_e\}}U_\theta(\BX(\tau\wedge\xi))\right]\\
&+\exp\left(\theta\gamma_b n_eT_e\right) \E_\bx \left[\1_{\{\tau\wedge\xi\geq n_eT_e\}}U_\theta(\BX(n_eT_e))\right].\\
 \end{aligned}
\end{equation}

By the strong Markov property of $(\BX(t))$ and
Proposition \ref{prop4.1} (which we can use because of \eqref{e:ine})
\begin{equation}\label{et3.4}
\begin{aligned}
\E_\bx&\left[ \1_{\{\tau\wedge\xi\wedge(n_e-1)T_e=\tau\}}U_\theta(\BX(n_eT_e))\right]\\
&\leq
 \E_\bx \left[\1_{\{\tau\wedge\xi\wedge(n_e-1)T_e=\tau\}}\exp\left(-\frac{1}{2}\theta \rho_e(n_eT_e-\tau)\right)U_\theta(\BX(\tau))\right] \\
 &\leq 
 \E_\bx\left[\1_{\{\tau\wedge\xi\wedge(n_e-1)T_e=\tau\}}U_\theta(\BX(\tau))\right]. \
 \end{aligned}
\end{equation}
Similarly, by the strong Markov property of $(\BX(t))$ and
Lemma \ref{lm3.3}, we obtain
\begin{equation}\label{et3.5}
\begin{aligned}
\E_\bx&\left[ \1_{\{(n_e-1)T_e<\tau\wedge\xi<n_eT_e\}}U_\theta(\BX(n_eT_e))\right]\\
&\leq
 \E_\bx \left[\1_{\{(n_e-1)T_e<\tau\wedge\xi<n_eT_e\}}\exp\left(\theta H(n_eT_e-\tau\wedge\xi)\right)U_\theta(\BX(\tau\wedge\xi ))\right] \\
 &\leq \exp(\theta HT_e)\E_\bx\left[\1_{\{(n_e-1)T_e<\tau\wedge\xi<n_eT_e\}}U_\theta(\BX( \tau\wedge\xi ))\right].
 \end{aligned}
\end{equation}
If $U_\theta(\bx)<\varsigma$ then applying \eqref{et3.4}, \eqref{et3.5} and the inequality $\tilde U_\theta(\BX(n_eT_e))\leq U_\theta(\BX(n_eT_e\wedge\xi))$  yields
\begin{equation}\label{et3.8}
\begin{aligned}
\tilde U_\theta(\bx)=U_\theta(\bx)
\geq& \E_\bx \left[\1_{\{\tau\wedge\xi\wedge(n_e-1)T_e=\tau\}}U_\theta(\BX(\tau))\right]\\
&+ \E_\bx \left[\1_{\{\tau\wedge\xi\wedge(n_e-1)T_e=\xi\}}U_\theta(\BX(\xi))\right]\\
 &+\exp\left(\theta\gamma_b (n_e-1)T_e\right) \E_\bx \left[\1_{\{(n_e-1)T_e<\tau\wedge\xi<n_eT\}}U_\theta(\BX(\tau\wedge\xi))\right]\\
&+\exp\left(\theta\gamma_b n_eT_e\right) \E_\bx \left[\1_{\{\tau\wedge\xi\geq n_eT_e\}}U_\theta(\BX(n_eT_e))\right]\\
\geq&\E_\bx \left[\1_{\{\tau\wedge\xi\wedge(n_e-1)T_e=\tau\}}U_\theta(\BX(n_eT_e))\right]\\
&+ \E_\bx \left[\1_{\{\tau\wedge\xi\wedge(n_e-1)T_e=\xi\}}\tilde U_\theta(\BX(n_eT_e))\right]\\
 &+\exp\left(\theta\gamma_b (n_e-1)T_e-\theta HT_e\right)\E_\bx \left[\1_{\{(n_e-1)T_e<\tau\wedge\xi<n_eT_e\}}U_\theta(\BX(n_eT_e))\right]\\
&+\exp\left(\theta\gamma_b n_eT_e\right) \E_\bx \left[\1_{\{\tau\wedge\xi\geq n_eT_e\}}U_\theta(\BX(n_eT_e))\right]\\
\geq& \E_\bx \tilde U_\theta(\BX(n_eT_e)) \,\quad\text{ (since } \tilde U_\theta(\cdot)\leq U_\theta(\cdot))
 \end{aligned}
\end{equation}
Clearly, if $U_\theta(\bx)\geq\varsigma$ then
\begin{equation}\label{et3.8a}
\E_\bx \tilde U_\theta(\BX(n_eT_e)) \leq \varsigma=\tilde U_\theta(\bx).
\end{equation}
As a result of \eqref{et3.8}, \eqref{et3.8a} and the Markov property of $(\BX(t))$, the sequence
$\{Y(k): k\in\N\}$ where $Y(k):=\tilde U_\theta(\BX(kn_eT_e))$ is a supermartingale.
For $\lambda\leq\varsigma$, let $\zeta_\lambda:=\inf\{k\in\N: Y(k)\geq \lambda\}$.
If $U_\theta(\bx)\leq \lambda\eps$ we have
\begin{equation}\label{e:EY_ineq}
\E_\bx  Y(k\wedge\zeta_\lambda)\leq\E_\bx Y(0)=U_\theta(\bx)\leq \lambda\eps\,\text{ for all }\, k\in\N.
\end{equation}
By assumption $\lambda\leq \varsigma$ and $Y(k)\leq \varsigma$ for any $k$. As a result  \eqref{e:EY_ineq} combined with the Markov inequality yields
$$\PP_\bx\{\zeta_\lambda<k\}\leq \lambda^{-1}\E_\bx Y(k\wedge\zeta_\lambda)\leq \eps.
$$
Next, let $k\to\infty$ to get
\begin{equation}\label{et3.7}
\PP_\bx\{\zeta_\lambda<\infty\}\leq \eps.
\end{equation}
Note that for a given compact set $\K\subset\R^{n,\circ}_+$ with nonempty interior, and for any $\eps>0$
there exists $\lambda>0$ such that
\begin{equation}\label{et3.9}
\PP_\bx\{X_i(t)\geq\lambda\,\text{ for all }\,t\in[0,n_eT_e], i=1,\dots,n\}>1-\eps,\, \bx\in\K.
\end{equation}
This standard fact can be shown in the same manner as \eqref{e2-lm4.7}, which is proved later in Lemma \ref{lm4.7}.

We show by contradiction that $(\BX(t))$ is transient.
If the process $(\BX(t))$ is recurrent in $\R^{n,\circ}_+$,
then $\BX(t)$ will enter $\K$ in a finite time almost surely given that $\BX(0)\in\R^{n,\circ}_+$.
By the strong Markov property and \eqref{et3.9}, we have
\begin{equation}\label{et3.10}
\PP_\bx\{X_i(kn_eT_e)\geq\lambda, \,i=1,\dots,n\,\text{ for some } k\in\N\}>1-\eps\,,\bx\in\R^{n,\circ}_+.
\end{equation}
If $\bx\in\R^{n,\circ}_+$ is such that $U_\theta(\bx)$ is sufficiently small then both \eqref{et3.7} and \eqref{et3.10} hold, a contradiction. Thus, $\BX$ is transient.

As a result, any weak$^*$-limit of
$P(t,\bx,\cdot)$ is a probability measure concentrated on $\partial\R^n_+$.
Similar computations to the ones from Lemma \ref{lm2.4} show that if
$P(t_k,\bx_0,\cdot)$ with $\lim_{k\to\infty}t_k=\infty$ converges weakly to $\pi$, and $h(\cdot)$ is a continuous function on $\R^n_+$ such that for all $\bx\in\R_+^n$ we have
$|h(\bx)|<K(1+\|\bx\|)^{\delta},\delta<\delta_0$
then $h(\cdot)$ is $\pi$-integrable and
$\int_{\R^n_+}h(\bx)P(t_k,\bx_0, d\bx)\to \int_{\R^n_+}h(\bx)\pi(d\bx).$

For any
$\pi$ with $\suppo(\pi)\subset\partial\R^n_+$,
we have
$$\int_{\R^n_+}\left(\bigwedge_{i=1}^n x_i^{\delta}\right)\pi(d\bx)=0,$$
and

\[
 \left(\bigwedge_{i=1}^n x_i^{\delta}\right)\leq K(1+\|\bx\|)^{\delta}.
\]
These facts imply
$$\lim_{t\to\infty}\int_{\R^n_+}\left(\bigwedge_{i=1}^n x_i^{\delta}\right)P(t_k,\bx_0, d\bx)=0$$
as desired.
\end{proof}
We also need the following lemmas.
\begin{lm}\label{lm4.4}
Suppose that Assumption \ref{a.extn2} is satisfied.
Then there is $\hat K>0$ such that
$$
\PP_\bx\left\{\limsup_{t\to\infty}\dfrac1t\int_0^t(1+\bc^\top\BX(s))^{\delta_1}\left(1+\sum_i(|f_i(\BX(s))|+|g_i(\BX(s))|^2)\right)ds\leq \hat K\right\}=1\,, \,\bx\in\R^n_+.
$$
Moreover,
\begin{equation}\label{e8-lm4.9}
\PP_{\bx}\left\{\lim_{t\to\infty}\dfrac1t\int_0^t\dfrac{\sum_ic_iX_i(s)g_i(\BX(s))}{1+\bc^\top\BX(s)}dE_i(s)=0\right\}=1\,, \,\bx\in\R^n_+.
\end{equation}

\end{lm}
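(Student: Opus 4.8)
The plan is to derive both assertions from a single Lyapunov inequality for the function $W(\bx):=1+\bc^\top\bx$ raised to the power $\delta_1$, followed by the strong law of large numbers for continuous local martingales. First I would show that there are constants $\tilde\gamma>0$, $C_0>0$ with
\[
\Lom W^{\delta_1}(\bx)\le -\tilde\gamma\,(1+\bc^\top\bx)^{\delta_1}\Big(1+\sum_i(|f_i(\bx)|+g_i^2(\bx))\Big)+C_0,\qquad \bx\in\R^n_+ .
\]
Indeed, a direct It\^o computation gives $\Lom W^{\delta_1}=\delta_1W^{\delta_1}\big[\tfrac{\sum_ic_ix_if_i}{W}-\tfrac{1-\delta_1}{2}\tfrac{\sum_{i,j}\sigma_{ij}c_ic_jx_ix_jg_ig_j}{W^2}\big]$, and the bracket is the quantity controlled by \eqref{a.tight} plus the correction $\tfrac{\delta_1}{2}\tfrac{\sum_{i,j}\sigma_{ij}c_ic_jx_ix_jg_ig_j}{W^2}$; since $c_ix_i\le W$, this correction is at most $\tfrac{\delta_1}{2}\sum_{i,j}|\sigma_{ij}g_ig_j|$, which by \eqref{e2.3} and $\delta_1\le\delta_0$ is at most $\tfrac{\gamma_b}{6}\sum_ig_i^2(\bx)$, so for $\|\bx\|\ge M$ the bracket is $\le-\tfrac56\gamma_b(1+\sum_i(|f_i|+g_i^2))$; on $\{\|\bx\|\le M\}$ everything is bounded and absorbed into $C_0$.

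Writing $\Psi(\bx):=(1+\bc^\top\bx)^{\delta_1}\big(1+\sum_i(|f_i(\bx)|+g_i^2(\bx))\big)$ and $A_t:=\int_0^t\Psi(\BX(s))\,ds$, It\^o's formula then yields $\tilde\gamma A_t\le W^{\delta_1}(\bx)+C_0t+M_t$, where $M_t:=\int_0^t\delta_1W^{\delta_1-1}(\BX(s))\sum_ic_iX_i(s)g_i(\BX(s))\,dE_i(s)$ is a continuous local martingale. Using $c_ix_i\le W$ and \eqref{e2.3} one gets $\langle M\rangle_t\le C_1\int_0^tW^{2\delta_1}(\BX(s))\sum_ig_i^2(\BX(s))\,ds$, and here is where Assumption \ref{a.extn2} is used: it is exactly the statement that $\|\bx\|^{\delta_1}\sum_ig_i^2(\bx)\le K'\big(1+\sum_i(|f_i(\bx)|+g_i^2(\bx))\big)$ for some $K'$, which together with $W\lesssim1+\|\bx\|$ and $\sum_ig_i^2\le1+\sum_i(|f_i|+g_i^2)$ gives $W^{\delta_1}\sum_ig_i^2\lesssim1+\sum_i(|f_i|+g_i^2)$, hence $W^{2\delta_1}\sum_ig_i^2\lesssim\Psi$ and therefore $\langle M\rangle_t\le C_2A_t$.

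Now argue on complementary events. On $\{A_\infty<\infty\}$ the bound $\limsup_{t\to\infty}A_t/t=0$ is immediate. On $\{A_\infty=\infty\}$: if $\langle M\rangle_\infty<\infty$ then $M_t$ converges a.s., so $M_t/t\to0$ and $\limsup_{t\to\infty}\tilde\gamma A_t/t\le C_0$; if $\langle M\rangle_\infty=\infty$ then $M_t/\langle M\rangle_t\to0$ a.s.\ by the strong law for continuous local martingales, so for small $\eps>0$ eventually $M_t\le\eps\langle M\rangle_t\le\eps C_2A_t$, and feeding this back in gives $(\tilde\gamma-\eps C_2)A_t\le W^{\delta_1}(\bx)+C_0t$, whence $\limsup_{t\to\infty}A_t/t\le C_0/(\tilde\gamma-\eps C_2)$; letting $\eps\downarrow0$ gives $\le C_0/\tilde\gamma$. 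Thus $\limsup_{t\to\infty}A_t/t\le\hat K:=C_0/\tilde\gamma$ almost surely, and since $A_t/t$ is exactly the time average in the statement, this is the first assertion. For \eqref{e8-lm4.9}, set $N_t:=\int_0^t\tfrac{\sum_ic_iX_i(s)g_i(\BX(s))}{1+\bc^\top\BX(s)}\,dE_i(s)$, a continuous local martingale with $\langle N\rangle_t=\int_0^t\tfrac{\sum_{i,j}\sigma_{ij}c_ic_jX_iX_jg_ig_j}{(1+\bc^\top\BX)^2}\,ds\le C_3\int_0^t\sum_ig_i^2(\BX(s))\,ds\le C_3A_t$, so $\limsup_{t\to\infty}\langle N\rangle_t/t\le C_3\hat K<\infty$; on $\{\langle N\rangle_\infty<\infty\}$ the martingale $N_t$ converges, hence $N_t/t\to0$, and on $\{\langle N\rangle_\infty=\infty\}$ we write $N_t/t=(N_t/\langle N\rangle_t)(\langle N\rangle_t/t)$ with the first factor tending to $0$ a.s.\ and the second eventually bounded, so again $N_t/t\to0$ a.s.

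The main obstacle is the Lyapunov computation together with the bracket estimate: one must verify that the It\^o correction produced by the exponent $\delta_1$ does not destroy the negativity in \eqref{a.tight} (this is precisely where the smallness of $\delta_0$ encoded in \eqref{e2.3} is used), and that Assumption \ref{a.extn2} is exactly strong enough to make $\langle M\rangle_t$ comparable to $A_t$ so that the self-improving bootstrap closes; the martingale limit theorems invoked afterwards are standard.
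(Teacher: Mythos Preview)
Your proposal is correct and follows the same route as the paper: the Lyapunov inequality for $(1+\bc^\top\bx)^{\delta_1}$, It\^o's formula, the use of Assumption~\ref{a.extn2} to dominate the quadratic variation of the martingale part by the increasing process $A_t$, and the strong law of large numbers for continuous local martingales. The only cosmetic difference is that the paper invokes the integral form of the martingale SLLN (showing $\int_0^\infty dQ_t/(1+A_t)^2<\infty$ and hence $M_t/A_t\to0$ directly), whereas you use the cruder fact $M_t/\langle M\rangle_t\to0$ on $\{\langle M\rangle_\infty=\infty\}$ together with $\langle M\rangle_t\le C_2A_t$ and an $\eps$-bootstrap; both arrive at the same place.
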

\begin{lm}\label{lm4.7}
Let Assumption \ref{a.extn2} be satisfied.
There is $\hat K_1>1$ such that
\begin{equation}\label{e1-lm4.7}
\PP_\bx\left\{\liminf_{t\to\infty} \dfrac{1}t\int_0^t\1_{\{\|\BX(s)\|\leq \hat K_1\}}ds\geq\dfrac12\right\}=1,\,\bx\in\R^n_+.
\end{equation}
Moreover, for any $\eps_1,\eps_2>0$, there is a $\beta>0$ such that
for each $i=1,\cdots,n$,
\begin{equation}\label{e2-lm4.7}
\PP_\bx\{X_i(t)>\beta\,,\forall\, t\in[0,n_eT_e]\}>1-\eps_1\,\text{ if }\, \bx\in\R^n_+,\|\bx\|\leq \hat K_1, x_i>\eps_2.
\end{equation}
\end{lm}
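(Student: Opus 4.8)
The plan is to treat the two displays separately: \eqref{e1-lm4.7} follows at once from the empirical-measure bound of Lemma \ref{lm4.4}, while \eqref{e2-lm4.7} is a finite-horizon lower bound on $X_i$ obtained from It\^o's formula after first confining $\BX$ to a compact set. For \eqref{e1-lm4.7}, note that $(1+\bc^\top\bx)^{\delta_1}\le (1+\bc^\top\bx)^{\delta_1}\big(1+\sum_i(|f_i(\bx)|+|g_i(\bx)|^2)\big)$, so Lemma \ref{lm4.4} gives $\limsup_{t\to\infty}\frac1t\int_0^t (1+\bc^\top\BX(s))^{\delta_1}\,ds\le \hat K$ almost surely, for every $\bx\in\R^n_+$. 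Since $\bc\in\R^{n,\circ}_+$ we have $1+\bc^\top\bx\ge 1+c_{\min}\|\bx\|$ with $c_{\min}:=\min_i c_i>0$; hence choosing a deterministic $\hat K_1>1$ with $(1+c_{\min}\hat K_1)^{\delta_1}>2\hat K$ yields $\1_{\{\|\bx\|>\hat K_1\}}\le (2\hat K)^{-1}(1+\bc^\top\bx)^{\delta_1}$ for all $\bx\in\R^n_+$, so that $\limsup_{t\to\infty}\frac1t\int_0^t\1_{\{\|\BX(s)\|>\hat K_1\}}\,ds\le \frac12$ a.s.; passing to the complementary indicator gives \eqref{e1-lm4.7}, with $\hat K_1$ independent of $\bx$.

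For \eqref{e2-lm4.7} I fix $T:=n_eT_e$ and $\bx$ with $\|\bx\|\le \hat K_1$, $x_i>\eps_2$, and first confine the process to a large ball. The computation behind Lemma \ref{lm2.2}, which is of the same type as \eqref{et1.1} (here the concavity correction from the exponent $\delta_0<1$ turns \eqref{a.tight} into a dissipativity estimate), shows $\Lom (1+\bc^\top\bx)^{\delta_0}\le -\delta_0\gamma_b(1+\bc^\top\bx)^{\delta_0}$ for $\|\bx\|\ge M$; hence $\Lom(1+\bc^\top\bx)^{\delta_0}\le C_0$ on all of $\R^n_+$ for some constant $C_0$, so $(1+\bc^\top\BX(t))^{\delta_0}-C_0t$ is a supermartingale. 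Writing $\sigma_R:=\inf\{t\ge 0:\|\BX(t)\|\ge R\}$, optional stopping and Markov's inequality give, for $R>\hat K_1$,
$$\PP_\bx\{\sigma_R\le T\}\le \frac{(1+c_{\max}\hat K_1)^{\delta_0}+C_0T}{(1+c_{\min}R)^{\delta_0}},$$
which can be made $<\eps_1/2$ by choosing $R$ large, uniformly over $\|\bx\|\le\hat K_1$.

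Now fix such an $R$. On $\{\|\bx\|\le R\}$ the locally Lipschitz functions $f_i,g_i$ are bounded by some $C_R$, so by It\^o's formula, for $t\le T\wedge\sigma_R$,
$$\ln X_i(t)=\ln x_i+\int_0^t\big(f_i(\BX(s))-\tfrac{\sigma_{ii}}{2}g_i^2(\BX(s))\big)\,ds+M_i(t),\qquad M_i(t):=\int_0^t g_i(\BX(s))\,dE_i(s),$$
where the drift integral is bounded in absolute value by $D_R:=(C_R+\tfrac{\sigma_{ii}}{2}C_R^2)T$ and $\langle M_i\rangle_{t}\le \sigma_{ii}C_R^2T$ for $t\le\sigma_R$. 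Doob's $L^2$-inequality applied to $-M_i(\cdot\wedge\sigma_R)$ gives $\PP_\bx\{\inf_{t\le T}M_i(t\wedge\sigma_R)\le -A\}\le \sigma_{ii}C_R^2TA^{-2}<\eps_1/2$ for $A$ large. Set $\beta:=\tfrac12\eps_2 e^{-D_R-A}$. On the complement of the two bad events $\{\sigma_R\le T\}$ and $\{\inf_{t\le T}M_i(t\wedge\sigma_R)\le -A\}$ we have $\sigma_R>T$ and $\inf_{t\le T}M_i(t)>-A$, whence $\ln X_i(t)>\ln\eps_2-D_R-A$, i.e. $X_i(t)>2\beta>\beta$, for all $t\in[0,T]$; this complement has probability exceeding $1-\eps_1$, which is \eqref{e2-lm4.7} (take the minimum of these $\beta$'s over $i$ if a single constant is wanted).

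The main obstacle is precisely this finite-horizon control of $\sup_{t\le T}\|\BX(t)\|$: because $f_i$ and $g_i$ are only locally Lipschitz, the drift and diffusion coefficients of $\ln X_i$ can be unbounded, so the elementary martingale estimate cannot be applied before the process has been confined to a compact set with high probability. The point making this confinement quantitative is that $(1+\bc^\top\bx)^{\delta_0}$ is a genuine supermartingale outside $\{\|\bx\|\le M\}$, while its generator is automatically bounded on the compact set $\{\|\bx\|\le M\}$.
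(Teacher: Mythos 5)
Your proof of \eqref{e1-lm4.7} is exactly the paper's: take $\hat K_1$ so that $(1+\bc^\top\bx)^{\delta_1}>2\hat K$ for $\|\bx\|>\hat K_1$ and apply Lemma \ref{lm4.4}. (Your inserted chain $1+\bc^\top\bx\ge 1+c_{\min}\|\bx\|$ just makes the choice of $\hat K_1$ explicit; it is harmless since $\bc\in\R^{n,\circ}_+$ and $x_i\ge0$.)

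For \eqref{e2-lm4.7} your argument is correct but takes a genuinely different route. The paper introduces a single Lyapunov function $\tilde V_i(\bx)=(1+\bc^\top\bx)/x_i^{\delta_0}$ that blows up simultaneously when $\|\bx\|\to\infty$ and when $x_i\to0$ (since $\delta_0<1$), shows $\Lom\tilde V_i\le\delta_0 H\tilde V_i$, and then one application of Dynkin's formula, Gronwall, and Markov's inequality against the threshold $\{y_i^{-1}\vee\|\by\|>k_1\}$ yields \eqref{e2-lm4.7} directly. You instead split the control into two steps: first confine $\BX$ to a ball of radius $R$ on $[0,n_eT_e]$ with probability $>1-\eps_1/2$ using the linear-growth supermartingale $(1+\bc^\top\BX(t))^{\delta_0}-C_0t$ (which follows from \eqref{lvc}), then apply It\^o's formula to $\ln X_i$ on the confined paths, where the drift and quadratic variation are bounded by constants depending only on $R$, and close with Doob's/Chebyshev's maximal inequality on the local martingale part. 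Both are sound; the paper's one-shot Lyapunov function is more compact, while your two-step scheme is more elementary (it avoids the Gronwall step and uses only the martingale maximal inequality) and makes transparent where the compactness is used — namely, that you cannot estimate $\ln X_i$ by this route until the coefficients have been made bounded. One small remark: you should note that the optional stopping used in the confinement step requires a prior localization at the exit times $\eta_k$ of the balls of radius $k$ (as in the paper's proof of Lemma \ref{lm2.2}), followed by Fatou; as stated, $(1+\bc^\top\BX(t))^{\delta_0}-C_0t$ is only a local supermartingale. This is routine and matches what the paper already does, so it is not a gap, just a point to be made explicit.
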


\begin{lm}\label{lm4.5}
Let Assumption \ref{a.extn2} be satisfied.
Suppose we have a sample path  of $\BX$ satisfying
$$\limsup_{t\to\infty}\dfrac1t\int_0^t(1+\bc^\top\BX(s))^{\delta_1}\left(1+\sum_i(|f_i(\BX(s))|+|g_i(\BX(s))|^2)\right)ds\leq \hat K$$
and that there a sequence $(T_k)_{k\in\N}\subset\R^n_+$ such that
$\lim_{k\to\infty}T_k=\infty$ and
$\left(\wtd \Pi_{T_k}(\cdot)\right)_{k\in\N}$ converges weakly to an invariant probability measure $\pi$ of $\BX$
when $k\to\infty$ .
Then for this sample path, we have
$\int_{\R^n_+}h(\bx)\wtd\Pi_{T_k}(d\bx)\to \int_{\R^n_+}h(\bx)\pi(d\bx)$
for any continuous function $h:\R^n_+\to\R$ satisfying
$|h(\bx)|<K_h(1+\bc^\top\bx)^{\delta}(1+\sum_i(|f_i(\bx)|+|g_i(\bx)|^2))\,,\,\bx\in \R^n_+$,
with $K_h$ a positive constant and $\delta\in[0,\delta_1)$.
\end{lm}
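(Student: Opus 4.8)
The plan is to follow the scheme of Lemma~\ref{lm2.4} (the deterministic averaged-measure version), now applied to the random occupation measures $\wtd\Pi_{T_k}$ along the given sample path, the point being to upgrade weak convergence to convergence against the unbounded test function $h$ by a uniform-integrability argument. Put
$$\psi(\bx):=(1+\bc^\top\bx)^{\delta_1}\Big(1+\sum_i(|f_i(\bx)|+|g_i(\bx)|^2)\Big),$$
which is continuous and nonnegative by Assumption~\ref{a.nonde}(2). By definition of $\wtd\Pi_t$ we have $\int_{\R^n_+}\psi\,d\wtd\Pi_t=\frac1t\int_0^t\psi(\BX(s))\,ds$, so the hypothesis on the sample path says precisely that $\limsup_{k\to\infty}\int_{\R^n_+}\psi\,d\wtd\Pi_{T_k}\le\hat K$; hence there are $k_0$ and a constant $\hat K'$ (say $\hat K'=\hat K+1$) with $\int_{\R^n_+}\psi\,d\wtd\Pi_{T_k}\le\hat K'$ for all $k\ge k_0$. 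Since $\pi$ is an invariant probability measure of $\BX$ and $\delta_1\le\delta_0$, Lemma~\ref{lm2.3} gives $\int_{\R^n_+}\psi\,d\pi\le H_2$ (alternatively this follows by lower semicontinuity of $\nu\mapsto\int\psi\,d\nu$ along $\wtd\Pi_{T_k}\Rightarrow\pi$). Finally, the growth assumption on $h$ can be rewritten as $|h(\bx)|\le K_h(1+\bc^\top\bx)^{\delta-\delta_1}\psi(\bx)$ with $\delta-\delta_1<0$.

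The key step is a tail estimate uniform over $\{\wtd\Pi_{T_k}:k\ge k_0\}\cup\{\pi\}$. Since $\bc\in\R^{n,\circ}_+$ and $\bx\in\R^n_+$, we have $\bc^\top\bx\ge c_{\min}\|\bx\|$ with $c_{\min}:=\min_i c_i>0$, so on $\{\|\bx\|>R\}$ one has $|h(\bx)|\le K_h(1+c_{\min}R)^{\delta-\delta_1}\psi(\bx)$, and therefore for any probability measure $\nu$ on $\R^n_+$,
$$\int_{\{\|\bx\|>R\}}|h|\,d\nu\le K_h(1+c_{\min}R)^{\delta-\delta_1}\int_{\R^n_+}\psi\,d\nu .$$
Taking $\nu=\wtd\Pi_{T_k}$ ($k\ge k_0$) or $\nu=\pi$ and using $\delta-\delta_1<0$, we find that for every $\eps>0$ there is $R_\eps$ with $\int_{\{\|\bx\|>R_\eps\}}|h|\,d\wtd\Pi_{T_k}\le\eps$ for all $k\ge k_0$ and $\int_{\{\|\bx\|>R_\eps\}}|h|\,d\pi\le\eps$.

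Now I would run the cut-off argument. Choose a continuous $\chi_R:\R^n_+\to[0,1]$ with $\1_{\{\|\bx\|\le R\}}\le\chi_R\le\1_{\{\|\bx\|\le R+1\}}$ (e.g.\ $\chi_R(\bx)=\big(1-(\|\bx\|-R)\big)^+\wedge 1$). Then $h\chi_{R_\eps}$ is bounded (as $h$ is continuous, hence bounded on the compact set $\{\|\bx\|\le R_\eps+1\}$) and continuous, so $\wtd\Pi_{T_k}\Rightarrow\pi$ yields $\int h\chi_{R_\eps}\,d\wtd\Pi_{T_k}\to\int h\chi_{R_\eps}\,d\pi$. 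Combining with the tail bounds,
\begin{align*}
\left|\int_{\R^n_+}h\,d\wtd\Pi_{T_k}-\int_{\R^n_+}h\,d\pi\right|
&\le \int_{\{\|\bx\|>R_\eps\}}|h|\,d\wtd\Pi_{T_k}
+\left|\int_{\R^n_+} h\chi_{R_\eps}\,d\wtd\Pi_{T_k}-\int_{\R^n_+} h\chi_{R_\eps}\,d\pi\right|\\
&\qquad+\int_{\{\|\bx\|>R_\eps\}}|h|\,d\pi ,
\end{align*}
where the middle term tends to $0$ as $k\to\infty$ and each tail term is $\le\eps$; hence the left side is $\le 2\eps$ for $k$ large, and letting $\eps\downarrow 0$ finishes the proof. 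The main obstacle is exactly this uniform integrability of $h$ against the family $\{\wtd\Pi_{T_k}\}$ and against $\pi$: it is the only place the strict gap $\delta_1-\delta>0$ is used, and it is what makes Assumption~\ref{a.extn2} (which guarantees, via Lemma~\ref{lm4.4}, that the hypothesised energy bound holds almost surely) indispensable — without the decaying factor $(1+c_{\min}R)^{\delta-\delta_1}$, weak convergence alone could not control the contribution of large $\|\bx\|$.
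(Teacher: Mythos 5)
Your proof is correct and follows exactly the strategy the paper has in mind: the authors explicitly say the proof of Lemma~\ref{lm4.5} is ``almost the same as that of Lemma~\ref{lm2.4},'' and you have faithfully reproduced that cutoff-plus-uniform-integrability argument, substituting the almost-sure energy bound (and $\delta_1$) for the expected-energy bound (and $\delta_0$) and invoking Lemma~\ref{lm2.3} (or lower semicontinuity) to control $\int\psi\,d\pi$. The only differences from the Lemma~\ref{lm2.4} proof are notational — you isolate the dominating function $\psi$ and use an explicit Lipschitz cutoff $\chi_R$ instead of the unspecified $\phi_l$ — but the decomposition, the role of the strict gap $\delta<\delta_1$, and the use of $\bc\in\R^{n,\circ}_+$ to get decay in $\|\bx\|$ are all as intended.
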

The proofs of Lemmas \ref{lm4.4} and \ref{lm4.7} are given in the Appendix while that of Lemma \ref{lm4.5} is almost the same as that of Lemma \ref{lm2.4} and is left for the reader.
\begin{lm}\label{lm4.6}Let Assumption \ref{a.extn2} be satisfied.
For any initial condition $\BX(0)=\bx\in\R^n_+$,
the family $\left\{\wtd \Pi_t(\cdot), t\geq 1\right\}$ is tight in $\R^n_+$,
and its weak$^*$-limit set, denoted by $\U=\U(\omega)$
is a family of invariant probability measures of $\BX$ with probability 1.
\end{lm}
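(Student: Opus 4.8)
The plan is to split the statement into its two halves: tightness of $\{\wtd\Pi_t:t\ge1\}$ almost surely, and the fact that every weak$^*$-limit is an invariant probability measure. I would handle tightness by working on the almost sure event supplied by Lemma~\ref{lm4.4}. Since $1+\sum_i(|f_i(\bx)|+|g_i(\bx)|^2)\ge1$, that lemma gives, with probability one,
\[
\limsup_{t\to\infty}\int_{\R^n_+}(1+\bc^\top\bx)^{\delta_1}\,\wtd\Pi_t(d\bx)=\limsup_{t\to\infty}\frac1t\int_0^t(1+\bc^\top\BX(s))^{\delta_1}\,ds\le\hat K,
\]
so for a.e.\ $\omega$ there is a random $T_0$ with $\int(1+\bc^\top\bx)^{\delta_1}\wtd\Pi_t(d\bx)\le\hat K+1$ whenever $t\ge T_0$. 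Markov's inequality then bounds $\wtd\Pi_t(\{\|\bx\|\ge R\})$ by $(\hat K+1)\big/\inf_{\|\bx\|\ge R}(1+\bc^\top\bx)^{\delta_1}$, which tends to $0$ as $R\to\infty$, uniformly in $t\ge T_0$; and for $t\in[1,T_0]$ each $\wtd\Pi_t$ is carried by the compact set $\{\BX(s):0\le s\le T_0(\omega)\}$. Hence $\{\wtd\Pi_t:t\ge1\}$ is a.s.\ tight, so every sequence $t_k\to\infty$ admits a weakly convergent subsequence.

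Next I would show that any such weak limit is $\BX$-invariant. Fix $f\in C_b(\R^n_+)$ and $s>0$; by the Feller property $T_sf\in C_b(\R^n_+)$, where $T_s$ is the semigroup of $\BX$. Writing $T_sf(\BX(r))=\E[f(\BX(r+s))\mid\F_r]$, decompose
\[
\int T_sf\,d\wtd\Pi_{t}=\frac1t\int_0^t f(\BX(r+s))\,dr+\frac1t\int_0^t\big(T_sf(\BX(r))-f(\BX(r+s))\big)\,dr .
\]
The first term equals $\int f\,d\wtd\Pi_t$ up to a boundary error of size $\le 2s\|f\|_\infty/t$. The second term is $-H_t/t$ with $H_t:=\int_0^t g(r)\,dr$ and $g(r):=f(\BX(r+s))-\E[f(\BX(r+s))\mid\F_r]$; since $g(r)$ is $\F_{r+s}$-measurable with vanishing $\F_r$-conditional mean, $\E[g(r)g(r')]=0$ whenever $|r-r'|>s$, whence $\E H_t^2\le 8s\|f\|_\infty^2\,t$. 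Chebyshev along $t=k^2$, Borel--Cantelli, and interpolation on $[k^2,(k+1)^2]$ using $|H_t-H_{k^2}|\le 2\|f\|_\infty\big((k+1)^2-k^2\big)$ then give $H_t/t\to0$ a.s. Passing to the limit along $t_k$ with $\wtd\Pi_{t_k}\to\pi$ yields $\int T_sf\,d\pi=\int f\,d\pi$; carrying this out over countable dense families of $f$'s and $s$'s (a countable union of null events) shows that a.s.\ every weak$^*$-limit $\pi$ satisfies $\pi T_s=\pi$ for all $s>0$, i.e.\ $\U(\omega)$ consists of invariant probability measures. This last step is the random-occupation-measure counterpart of \cite[Theorem~9.9]{EK09} and \cite[Proposition~6.4]{EHS15} and could alternatively be quoted from there.

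The hard part is the almost sure vanishing of the fluctuation term $H_t/t$: for the deterministic averaged measures $\Pi^\bx_t$ the invariance of weak limits is automatic, but for the random measures $\wtd\Pi_t$ one must exploit that the conditionally centred increments $g(r)$ decorrelate once the time lag exceeds $s$, which is exactly what turns the naive bound into an $O(1/t)$ estimate along the sparse subsequence $t=k^2$, after which interpolation recovers the full limit. Everything else --- the tightness, the reduction via the Feller property, and the boundary corrections --- is routine once Lemma~\ref{lm4.4} and the Feller property recorded after Lemma~\ref{lm2.2} are available.
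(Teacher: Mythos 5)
Your proof is correct. The paper itself disposes of Lemma~\ref{lm4.6} by citation: tightness is attributed to Lemma~\ref{lm4.4} exactly as you argue, and the invariance of the weak$^*$-limit set of the random occupation measures is quoted from \cite{SBA11}, \cite{EHS15} and \cite{B14} rather than proved; your martingale--decorrelation argument is essentially a self-contained reconstruction of the cited result, so the approach is the same, just spelled out.
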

\begin{proof}
The tightness follows from Lemma \ref{lm4.4}.
The property of the weak$^*$-limit set of normalized occupation measures
was first proved in  \cite[Theorems 4, 5]{SBA11} for compact state spaces and then generalized to a locally compact state space in \cite[Theorem 4.2]{EHS15}. Similar results for general Markov processes can be found in \cite{B14}.
\end{proof}
\begin{lm}\label{lm4.9}
Suppose that Assumption \ref{a.extn2} is satisfied.
Let $\mu\in\M^1$.
For any $\bx\in\R^{n,\circ}_+$,
$$\PP_\bx\Big\{\U(\omega)\subset\Conv(\M_\mu\cup\{\mu\})\Big\}=\PP_\bx\Big\{\U(\omega)=\{\mu\}\Big\}$$
\end{lm}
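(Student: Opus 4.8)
The plan is to prove that $\PP_\bx\big(\{\U(\omega)\subset\Conv(\M_\mu\cup\{\mu\})\}\setminus\{\U(\omega)=\{\mu\}\}\big)=0$; the reverse inequality of probabilities is trivial since $\{\mu\}\in\Conv(\M_\mu\cup\{\mu\})$. I would first reduce to the nondegenerate case $\R_+^\mu\ne\{\0\}$, $\M_\mu\ne\emptyset$ (otherwise $\Conv(\M_\mu\cup\{\mu\})=\{\mu\}$ and there is nothing to show, as $\U(\omega)\ne\emptyset$). Writing $E:=\{\U(\omega)\subset\Conv(\M_\mu\cup\{\mu\})\}$, note that since $\suppo(\mu)\subset\R_+^{\mu,\circ}$ and $\suppo(\nu)\subset\partial\R_+^\mu$ for $\nu\in\M_\mu$, any $\pi=q_\mu\mu+\sum_{\nu\in\M_\mu}q_\nu\nu$ in $\Conv(\M_\mu\cup\{\mu\})$ has $\pi(\partial\R_+^\mu)=\sum_{\nu\in\M_\mu}q_\nu$, so $\pi(\partial\R_+^\mu)=0$ forces $\pi=\mu$. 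Since $\U(\omega)$ is a set of invariant probability measures (Lemma \ref{lm4.6}), it therefore suffices to show that a.s.\ on $E$ no element of $\U(\omega)$ charges $\partial\R_+^\mu$.

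Next I would record the key consequence of $\mu\in\M^1$: with $\hat p_i$ $(i\in I_\mu)$, $\check p$, $\rho_e$ as in \eqref{e3.2}--\eqref{e3.3}, Lemma \ref{lm4.1} ($\lambda_i(\mu)=0$ for $i\in I_\mu$), the reformulation of \eqref{ae3.2} stated after Lemma \ref{lm4.1} ($\sum_{i\in I_\mu}\hat p_i\lambda_i(\nu)>0$ for $\nu\in\M_\mu$), and linearity of $\pi\mapsto\lambda_i(\pi)$ together give $\sum_{i\in I_\mu}\hat p_i\lambda_i(\mu)=0$ and $\sum_{i\in I_\mu}\hat p_i\lambda_i(\pi)>0$ for every $\pi\in\Conv(\M_\mu\cup\{\mu\})\setminus\{\mu\}$. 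Then, setting $W(\bx):=\prod_{i\in I_\mu}x_i^{\hat p_i}$ and $h:=\sum_{i\in I_\mu}\hat p_i\big(f_i-\tfrac{\sigma_{ii}}{2}g_i^2\big)$, It\^o's formula yields $\ln W(\BX(t))=\ln W(\bx)+\int_0^t h(\BX(s))\,ds+M(t)$ with $M$ a continuous local martingale whose bracket grows at most linearly a.s.\ (Assumption \ref{a.extn2} and Lemma \ref{lm4.4}), hence $M(t)/t\to0$ a.s.\ as for \eqref{e8-lm4.9}. Since $h$ satisfies the growth condition of Lemma \ref{lm4.5} with $\delta=0$, I claim that on $E$ the limit set of $t\mapsto\frac1t\int_0^t h(\BX(s))\,ds$ (and hence of $t\mapsto\frac{\ln W(\BX(t))}{t}$) equals $\big\{\sum_{i\in I_\mu}\hat p_i\lambda_i(\pi):\pi\in\U(\omega)\big\}\subset[0,\infty)$. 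In particular, if $\mu\notin\U(\omega)$ this set has positive infimum, so $\ln W(\BX(t))\to\infty$, forcing $\max_{i\in I_\mu}X_i(t)\to\infty$ and $\|\BX(t)\|\to\infty$, which contradicts \eqref{e1-lm4.7}. So a.s.\ on $E$ one has $\mu\in\U(\omega)$, and it remains to exclude the existence of some $\pi^{(1)}\in\U(\omega)\setminus\{\mu\}$ — which would charge $\partial\R_+^\mu$.

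For that last point I would invoke the Lyapunov function $U_\theta$ of Proposition \ref{prop4.1}, which blows up exactly when $\BX$ approaches $\partial\R_+^\mu$ with the external coordinates $x_i$ $(i\in I_\mu^c)$ small. On $E$, because every limit measure lives on $\R_+^\mu$, the occupation-time fraction in $\{x_i\ge\delta_e\text{ for some }i\in I_\mu^c\}$ tends to $0$; on the complementary region $\{x_i<\delta_e,\ i\in I_\mu^c\}$, $U_\theta$ contracts strictly over windows of length in $[T_e,n_eT_e]$ — by Proposition \ref{prop4.1} for $\|\bx\|\le M$ and by $\Lom U_\theta\le-\theta\gamma_b U_\theta$ for $\|\bx\|\ge M$ (cf.\ \eqref{et3.1}) — while Lemma \ref{lm3.3} controls its growth everywhere. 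Truncating to $\varsigma\wedge U_\theta$ and running the stopping-time/supermartingale argument from the proofs of Theorems \ref{thm3.1} and \ref{thm4.1} (with the strong Markov property) should show that on $E$ the occupation-time fraction in any fixed neighbourhood of $\partial\R_+^\mu$ tends to $0$ a.s.; consequently no element of $\U(\omega)$ charges $\partial\R_+^\mu$, which (together with the previous paragraph) gives $\U(\omega)=\{\mu\}$ a.s.\ on $E$.

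I expect the main obstacle to be precisely this last step: Proposition \ref{prop4.1} only provides the contraction near $\R_+^\mu$, so the asymptotically negligible excursions on which it fails — those with $x_i\ge\delta_e$ for some $i\in I_\mu^c$, and those outside $\{\|\bx\|\le M\}$ — must be handled carefully enough to conclude that $\BX$ cannot linger near $\partial\R_+^\mu$. This is the same kind of boundary-avoidance bookkeeping as in the proof of Theorem \ref{thm3.1}, but carried out relative to the subspace $\R_+^\mu$ instead of $\R_+^{n,\circ}$; the point of the weights $\hat p_i,\check p$ in \eqref{e3.3} is that they let the approach to $\partial\R_+^\mu$ and the growth of the external coordinates be controlled simultaneously.
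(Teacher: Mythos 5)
Your steps up to establishing $\mu\in\U(\omega)$ a.s.\ on $E:=\{\U(\omega)\subset\Conv(\M_\mu\cup\{\mu\})\}$ are sound and run parallel to the paper's argument: the weights $\hat p_i$ from \eqref{e3.2}, the identity $\lambda_i(\mu)=0$ (Lemma \ref{lm4.1}), the function $W(\bx)=\prod_{i\in I_\mu}x_i^{\hat p_i}$, the vanishing of the martingale average, and Lemmas \ref{lm4.4}--\ref{lm4.6} are exactly the right ingredients. But the proposal then forks in the wrong direction. The last step --- ruling out $\pi^{(1)}\in\U(\omega)\setminus\{\mu\}$ once one already knows $\mu\in\U(\omega)$ --- is where you turn to $U_\theta$, Proposition \ref{prop4.1}, and a supermartingale bookkeeping argument you explicitly do not carry through, flagging it as ``the main obstacle.'' That is a genuine gap: Proposition \ref{prop4.1} gives a contraction only on the event that the process starts near $\R_+^{\mu,\circ}$, and turning it into an a.s.\ statement that the occupation-time fraction near $\partial\R_+^\mu$ vanishes on $E$ would require essentially the machinery of Lemma \ref{lm4.8}, which in the paper comes \emph{after} and \emph{uses} the present lemma.

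The missing observation --- and the route the paper actually takes --- is that the contradiction you exploit in step 5 already works against \emph{any} $\pi\ne\mu$ in $\U(\omega)$, not only in the case $\mu\notin\U(\omega)$, provided you use the a.s.\ bound on the exponential growth rate rather than \eqref{e1-lm4.7}. From \eqref{e8-lm4.9} and the a.s.\ vanishing of
$\dfrac1t\int_0^t\Big[\dfrac{\sum_ic_iX_if_i}{1+\bc^\top\BX}-\tfrac12\dfrac{\sum_{i,j}\sigma_{ij}c_ic_jX_iX_jg_ig_j}{(1+\bc^\top\BX)^2}\Big]ds$
(a consequence of Lemmas \ref{lm2.3}, \ref{lm4.5}, \ref{lm4.6}), It\^o's formula gives $\limsup_{t\to\infty}\frac{\ln(1+\bc^\top\BX(t))}{t}=0$ a.s., hence $\limsup_{t\to\infty}\frac{\ln X_i(t)}{t}\le0$ a.s.\ for every $i$. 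Now if $\pi\in\U(\omega)$ with $\pi\ne\mu$, take $t_k\to\infty$ with $\wtd\Pi_{t_k}\Rightarrow\pi$; by Lemma \ref{lm4.5} and \eqref{e8-lm4.9},
$\sum_{i\in I_\mu}\hat p_i\frac{\ln X_i(t_k)}{t_k}\to\sum_{i\in I_\mu}\hat p_i\lambda_i(\pi)=(1-\rho)\sum_{i\in I_\mu}\hat p_i\lambda_i(\pi_1)>0$
by \eqref{e3.2}, contradicting $\sum_{i\in I_\mu}\hat p_i\limsup_t\frac{\ln X_i(t)}{t}\le0$. That is the entire argument; the $U_\theta$/supermartingale detour is unnecessary, and using \eqref{e1-lm4.7} instead of the sharper a.s.\ $\limsup$ bound is precisely what forces you into the case split that leaves the second half unproved.
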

\begin{rmk}
Note that, since $\Big\{\U(\omega)=\{\mu\}\Big\} \subset \Big\{\U(\omega)\subset\Conv(\M_\mu\cup\{\mu\})\Big\}$, it would be equivalent to prove that $\Big\{\U(\omega)\subset\Conv(\M_\mu\cup\{\mu\})\Big\} = \Big\{\U(\omega)=\{\mu\}\Big\}$  $\PP_\bx$ - a.s. for all $\bx\in\R^{n,\circ}_+$.
\end{rmk}
\begin{proof}
Since $\mu$ satisfies Assumption \ref{a.extn},
it follows from \eqref{ae3.2} that, there are $p^\mu_i>0, i\in I_\mu$ such that
\begin{equation}\label{e2-lm4.9}
\sum_{i\in I_\mu} p^\mu_i\lambda_i(\nu)>0, \nu\in\M_\mu.
\end{equation}
As a result of Lemmas \ref{lm2.3}, \ref{lm4.5} and \ref{lm4.6},
\begin{equation}\label{e10-lm4.9}
\PP_{\bx}\left\{\lim\limits_{t\to\infty}\dfrac1t\int_0^t \left[\dfrac{\sum_ic_iX_i(s)f_i(\BX(s))}{1+\bc^\top\BX(s)}-\dfrac12\dfrac{\sum_{i,j} c_ic_jX_i(s)X_j(s)g_i(\BX(s))g_j(\BX(s))}{(1+\bc^\top\BX(s))^2}\right]ds=0\right\}=1.
\end{equation}
In light of It\^o's formula, it follows from \eqref{e8-lm4.9} and \eqref{e10-lm4.9} that
\begin{equation}\label{e7-lm4.9}
\PP_{\bx}\left\{\limsup_{t\to\infty}\dfrac{\ln X_i(t)}t\leq0,\,\, i=1,\dots,n\right\}\geq\PP_{\bx}\left\{\limsup_{t\to\infty}\dfrac{\ln (1+\bc^\top\BX(t))}t=0\right\}=1,\,\bx\in\R^{n,\circ}_+.
\end{equation}
On the other hand, similarly to \eqref{e8-lm4.9}, we have
\begin{equation}\label{e6-lm4.9}
\PP_{\bx}\left\{\lim_{t\to\infty}\dfrac1t\int_0^tg_i(\BX(s))dE_i(s)=0,\,i=1,\dots,n\right\}=1.
\end{equation}

In view of \eqref{e6-lm4.9} and \eqref{e7-lm4.9}, to prove the lemma, it suffices to show that if the following properties
\begin{itemize}
\item[a)] $\U(\omega)\subset\Conv(\M_\mu\cup\{\mu\})$
\item[b)] \begin{equation}\label{e3-lm4.9}
\lim_{t\to\infty}\dfrac1t\int_0^tg_i(\BX(s))dE_i(s)=0,\,\, i=1,\dots,n
\end{equation}
\item[c)] \begin{equation}\label{e5-lm4.9}
\limsup_{t\to\infty}\dfrac{\ln X_i(t)}t\leq0,\,\, i=1,\dots,n
\end{equation}
\end{itemize}
hold then
$\U(\omega)=\{\mu\}.$

We argue by contradiction. Assume there is a
sequence $\{t_k\}$ with $\lim_{k\to\infty}t_k=\infty$ such that
 $\wtd \Pi_{t_k}(\cdot)$ converges weakly to an invariant probability of the form
$\pi=(1-\rho)\pi_1+\rho\mu$
where $\rho\in(0,1]$ and $\pi_1\in\Conv(\M_\mu)$.
It follows from Lemma \ref{lm4.4} and  \eqref{e2-lm4.9} that
\begin{equation}\label{e4-lm4.9}
\begin{aligned}
\lim_{k\to\infty}\dfrac1{t_k}&\sum_{i\in I_\mu} p^\mu_i\int_0^{t_k}\left(f_i(\BX(s))-\dfrac{\sigma_{ii}g_i^2(\BX(s))}2\right)ds\\
=&\sum_{i\in I_\mu} p^\mu_i\lambda_i(\pi)\\
=&(1-\rho)\sum_{i\in I_\mu}p^\mu_i\lambda_i(\pi_1)+\rho \sum_{i\in I_\mu}p^\mu_i\lambda_i(\mu)\\
=&(1-\rho)\sum_{i\in I_\mu}p^\mu_i\lambda_i(\pi_1) \,\,\text{ (due to Lemma \ref{lm4.1})}\\
>&0.
\end{aligned}
\end{equation}
As a result of \eqref{e3-lm4.9}, \eqref{e4-lm4.9} and It\^o's formula
$$
\begin{aligned}
\lim_{k\to\infty}\sum_{i\in I_\mu}&p^\mu_i\dfrac{\ln X_i(t_k)}{t_k}\\
=&\lim_{k\to\infty}\dfrac1{t_k}\sum_{i\in I_\mu} p^\mu_i\int_0^{t_k}\left[\left(f_i(\BX(s))-\dfrac{\sigma_{ii}g_i^2(\BX(s))}2\right)ds+g_i(\BX(s))dE_i(s)\right]>0
\end{aligned}
$$
which contradicts \eqref{e5-lm4.9}.
This finishes the proof.
\end{proof}
\begin{lm}\label{lm4.8}
Suppose that Assumption \ref{a.extn2} is satisfied.
Let $\mu\in\M^1$.
For any $k\in\N$,  $\eps>0$,
there is $\Delta>0$ such that
$$\PP_\bx\left\{\U(\omega)=\{\mu\} \text{ and }\lim_{t\to\infty}\dfrac{\ln X_i(t)}t=\lambda_i(\mu)<0, i\in I_\mu^c\right\}>1-\eps, \bx\in\K^{k,\Delta}_\mu$$
where
$$\K^{k,\Delta}_\mu:=\{\bx\in\R^{n,\circ}_+, k^{-1}\leq x_i\leq k\text{ for } i\in I_\mu, x_i<\Delta\text{ for }i\in I_\mu^c\}.$$
\end{lm}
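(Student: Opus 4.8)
The plan is to reduce the assertion, via Lemma~\ref{lm4.9}, to a quantitative statement that $\BX$ is trapped near $\R_+^\mu$ with high probability, and to extract that trapping from the supermartingale built in the proof of Theorem~\ref{thm4.1}. First I would note that by Lemma~\ref{lm4.9} it suffices to prove
\[
\PP_\bx\big\{\U(\omega)\subset\Conv(\M_\mu\cup\{\mu\})\big\}>1-\eps,\qquad \bx\in\K^{k,\Delta}_\mu,
\]
for $\Delta$ small, since on $\{\U(\omega)=\{\mu\}\}$ the exponential rates are automatic: there $\wtd\Pi_t\to\mu$ weakly$^*$, so for $i\in I_\mu^c$ It\^o's formula, \eqref{e6-lm4.9} and Lemma~\ref{lm4.5} (applied to $h=f_i-\tfrac{\sigma_{ii}}{2}g_i^2$, which obeys the growth bound with exponent $0<\delta_1$) give $t^{-1}\ln X_i(t)\to\lambda_i(\mu)$, negative by \eqref{ae3.1}. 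To get the displayed bound, let $\varsigma,\theta,n_e,T_e,U_\theta$ be as in the proof of Theorem~\ref{thm4.1}: there $Y(m):=(\varsigma\wedge U_\theta)(\BX(mn_eT_e))$ is a supermartingale and $\PP_\bx\{\zeta_\lambda<\infty\}\le\eps$ for $\zeta_\lambda:=\inf\{m:Y(m)\ge\lambda\}$ as soon as $\lambda\le\varsigma$ and $U_\theta(\bx)\le\lambda\eps$. Since $1+\bc^\top\bx$ and $\prod_{j\in I_\mu}x_j^{-\hat p_j}$ are bounded in terms of $k$ alone on $\K^{k,\Delta}_\mu$, one has $U_\theta(\bx)\le C(k)\Delta^{\check p\theta}$ there; choosing first a small $\lambda$ and then $\Delta$ with $U_\theta<\lambda\eps$ on $\K^{k,\Delta}_\mu$ gives $\PP_\bx\{\zeta_\lambda=\infty\}\ge1-\eps$.

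On $\{\zeta_\lambda=\infty\}$ one has $U_\theta(\BX(mn_eT_e))<\lambda$ for all $m$; since $e^{-\theta Ht}U_\theta(\BX(t))$ is a supermartingale (Lemma~\ref{lm3.3}), Doob's inequality on each block $[mn_eT_e,(m+1)n_eT_e]$ bounds the conditional expected time spent with $U_\theta\ge c$ by $C_1\lambda/c$, and a martingale law of large numbers yields $\limsup_t\wtd\Pi_t(\{U_\theta\ge c\})\le C_2\lambda/c$ for every $c>0$, $\PP_\bx$-a.s.\ on $\{\zeta_\lambda=\infty\}$. Taking the $\hat p_j$ with $\sum_{j\in I_\mu}\hat p_j<1$ (only $0<\hat p_j<\delta_0$ is needed in \eqref{e3.3}), the ratio $(1+\bc^\top\bx)/\prod_{j\in I_\mu}x_j^{\hat p_j}$ stays bounded below on $\R_+^n$, whence $\{x_i\ge c\text{ for some }i\in I_\mu^c\}\subset\{U_\theta\ge\kappa_0^\theta c^{\check p\theta}\}$ and so $\limsup_t\wtd\Pi_t(\{x_i\ge c\text{ for some }i\in I_\mu^c\})\le C_3\lambda\,c^{-\check p\theta}$. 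Now let $\pi\in\U(\omega)$; it is invariant (Lemma~\ref{lm4.6}), and since $\BX$ has no invariant measure in $\R_+^{n,\circ}$ (proof of Theorem~\ref{thm4.1}) its ergodic decomposition consists of measures in $\M$. Using that $\M$ is finite (at most one ergodic measure per coordinate face, by Theorem~\ref{t:pers} applied to each face), the last bound forces every ergodic component of $\pi$ lying off $\R_+^\mu$, and likewise every one in $\M_\mu$, to carry only weight $O(\lambda)$; hence $\lambda_i(\pi)=(1-O(\lambda))\lambda_i(\mu)+O(\lambda)<0$ for $i\in I_\mu^c$ once $\lambda$ is small. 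By Lemma~\ref{lm4.5} (on the full-measure set of Lemma~\ref{lm4.4}) this gives $\limsup_t t^{-1}\ln X_i(t)\le\sup_{\pi\in\U}\lambda_i(\pi)<0$, so $X_i(t)\to0$ for every $i\in I_\mu^c$; consequently each $\pi\in\U$ charges no set $\{x_i>0\}$, $i\in I_\mu^c$, i.e.\ $\U(\omega)\subset\Conv(\M_\mu\cup\{\mu\})$ on $\{\zeta_\lambda=\infty\}$, as needed; Lemma~\ref{lm4.9} then upgrades this to $\U(\omega)=\{\mu\}$.

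The step I expect to be the crux is the passage ``occupation-measure bound $\Rightarrow$ the boundary components of $\pi$ have weight $O(\lambda)$'': this requires understanding the occupation measure near $\partial\R_+^\mu$, where $U_\theta$ degenerates (blowing up along some directions, becoming indeterminate along others), and arranging that a single small $\lambda$ — hence a single small $\Delta$ — works uniformly over the finitely many ergodic boundary measures. Once that is in place, the closing argument exploits precisely the sign condition $\lambda_i(\mu)<0$ for $i\in I_\mu^c$ to turn ``close to $\Conv(\M_\mu\cup\{\mu\})$'' into ``exactly inside it''. The remaining ingredients — the supermartingale estimate, the block-wise averaging, and the final exponent computation — are mechanical given Proposition~\ref{prop4.1}, Theorem~\ref{thm4.1} and Lemmas~\ref{lm3.3}--\ref{lm4.6}.
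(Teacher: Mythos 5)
Your opening reduction — via Lemma \ref{lm4.9} it suffices to get $\PP_\bx\{\U(\omega)\subset\Conv(\M_\mu\cup\{\mu\})\}>1-\eps$, and on $\{\U(\omega)=\{\mu\}\}$ the exponential rates come from Lemmas \ref{lm4.4}, \ref{lm4.5} and \eqref{e6-lm4.9} — is exactly right and matches the paper. The divergence, and the gap, is in how you try to establish the inclusion. Your occupation-measure route yields, for a \emph{fixed} small $\lambda$, a bound $\wtd\Pi_t(\{x_i\geq c\text{ for some }i\in I_\mu^c\})\lesssim \lambda c^{-\check p\theta}$ on $\{\zeta_\lambda=\infty\}$; at $c=1$ and using finiteness of $\M$ this bounds the weight of each $\nu\in\M\setminus(\M_\mu\cup\{\mu\})$ in any $\pi\in\U(\omega)$ by $O(\lambda)$. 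That is not zero, and $\lambda$ (hence $\Delta$) is fixed once you have secured $\PP_\bx\{\zeta_\lambda=\infty\}\geq1-\eps$, so you cannot send $\lambda\to0$. The bridge you propose then relies on the assertion that the $\M_\mu$ components ``likewise carry only weight $O(\lambda)$''; this is not supported by your estimate, which is silent about $\partial\R_+^\mu$ since those measures sit on $\{x_i=0,\,i\in I_\mu^c\}$. Moreover there is no sign control on $\lambda_i(\nu)$ for $i\in I_\mu^c$ and $\nu\in\M_\mu$ (Assumption \ref{a.extn} constrains $\lambda_i(\mu)$, $i\in I_\mu^c$, and $\lambda_i(\nu)$, $i\in I_\mu$, but not these), so the step $\lambda_i(\pi)<0\Rightarrow X_i\to0\Rightarrow\U(\omega)\subset\Conv(\M_\mu\cup\{\mu\})$ does not close. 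You correctly flag this passage as the crux, but it genuinely fails.

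The paper circumvents this by a pathwise, not occupation-measure, argument. It fixes $\CH=\{\bx:\|\bx\|\leq\hat K_1,\ \max_{i\in I^c_\mu}x_i\geq1\}$, uses Lemma \ref{lm4.7} plus the strong Markov property to show that once $\BX$ enters $\CH$ it keeps $\max_{i\in I^c_\mu}X_i\geq\beta$ over the whole block $[\tau_\CH,\tau_\CH+n_eT_e]$ with conditional probability at least $1/2$, while the supermartingale $\tilde U_\theta(\BX(jn_eT_e))$ (started from $\K^{k,\Delta}_\mu$ with $\Delta$ small) forbids $\max_{i\in I^c_\mu}X_i(jn_eT_e)\geq\beta$ ever occurring except on an event of probability $<\eps/2$. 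This forces $\PP_\bx\{\tau_\CH<\infty\}<\eps$, and therefore the normalized occupation time of $\CH$ tends to zero with probability $>1-\eps$; since $\nu(\CH)>0$ for every $\nu\in\M\setminus(\M_\mu\cup\{\mu\})$, this makes the offending weights exactly zero, not merely small. That recurrence-time trick (hitting $\CH$ implies a full block with a coordinate bounded away from zero) is the missing ingredient your averaging argument cannot replace.
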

\begin{proof}
Let $\tilde U(\bx)$ be the function defined as in the proof of Theorem \ref{thm4.1}.

In view of Lemma \ref{lm4.7}, there is
$\beta>0$ such that
\begin{equation}\label{e2-lm4.8}
\PP_\bx\left\{\max_{i\in I^c_\mu}\{X_i(t)\}>\beta\,,\forall\, t\in[0,n_eT_e]\right\}>\frac{1}{2},\,\bx\in\CH
\end{equation}
where
$$\CH=\{\bx\in\R^n_+: \|\bx\|\leq \hat K_1, \,\max_{i\in I^c_\mu}\{x_i\}\geq1\}.$$
It can be seen that
\begin{equation}\label{e:nu_supp}
\nu(\CH)>0, \nu\in\M\setminus(\M_\mu\cup\{\mu\}).
\end{equation}
By the definition of $\tilde U(\cdot)$, there is
$\Delta>0$ sufficiently small such that
\begin{equation}\label{e3-lm4.8}
\sup_{\bx\in\K^{k,\Delta}_\mu}\{\tilde U(\bx)\}\leq \dfrac{2}{\eps}\inf_{\by\in\R^{n,\circ}_+,x_i\geq\beta, i\in I^c_\mu}\{\tilde U(\by)\}
\end{equation}
In view of \eqref{e3-lm4.8}, since $\{Y(j):=\tilde U_\theta(\BX(jn_eT_e)): j\in\N\}$ is a supermartingale, similar to \eqref{et3.7}, we can obtain
\begin{equation}\label{e4-lm4.8}
\PP_\bx\left\{\max_{i\in I^c_\mu}\{X_i(jn_eT_e)\}< \beta\,\text{ for all } j\in\N\right\}>1-\dfrac\eps2\, \text{ if } \bx\in\K^{k,\Delta}_\mu.
\end{equation}
Let $\tau_\CH=\inf\{t>0: \BX(t)\in\CH\}$.

Now, suppose that
there is  $\bx\in\K^{k,\Delta}_\mu$ such that
\begin{equation}\label{e5-lm4.8}
\PP_\bx\left\{
\limsup_{t\to\infty}\dfrac1t\int_0^t \1_{\{\BX(s)\in \CH\}}ds>0
\right\}>\eps.
\end{equation}
Then
\begin{equation}\label{e6-lm4.8}
\PP_\bx\{\tau_\CH<\infty\}>\eps.
\end{equation}
By the strong Markov property of $\{\BX(t):t\in\R_+\}$,
it follows from \eqref{e2-lm4.8} and \eqref{e6-lm4.8}
that
$$
\PP_\bx\left(\{\tau_\CH<\infty\}\bigcap\left\{\max_{i\in I^c_\mu}\{X_i(t)\}\geq\beta\,\text{ for } t\in[\tau_\CH,\tau_\CH+n_eT_e]\right\}\right)>\frac{1}{2}\eps.
$$
which contradicts \eqref{e4-lm4.8}.
Thus, \eqref{e5-lm4.8} does not hold, that is,
we have
\begin{equation}\label{e7-lm4.8}
\PP_\bx\left\{
\lim_{t\to\infty}\dfrac1t\int_0^t \1_{\{\BX(s)\in \CH\}}ds=0
\right\}>1-\eps, \,\bx\in \K^{k,\Delta}_\mu
\end{equation}
If for an $\omega\in\Omega$,  and a sequence $\{t_j\}$ with $\lim_{j\to\infty}t_j=\infty$,
 $\wtd \Pi_{t_j}(\cdot)$ converges weakly to an invariant probability of the form
$\pi=(1-\rho)\pi_1+\rho\pi_2$
where $\rho\in(0,1]$ and $\pi_1\in\Conv(\M_\mu\cup\{\mu\})$, $\pi_2\in\Conv(\M\setminus(\M_\mu\cup\{\mu\}))$
 then by \eqref{e:nu_supp}
$$\limsup_{j\to\infty}\dfrac1{t_j}\int_0^{t_j} \1_{\{\BX(s)\in \CH\}}ds\geq \pi(\CH)\geq\rho\pi_2(\CH)>0.$$
This inequality, combined with Lemma \ref{lm4.6} and \eqref{e7-lm4.8}, implies that
$$\PP_\bx\left\{
\U(\omega)\subset\Conv(\M_\mu\cup\{\mu\})
\right\}>1-\eps, \,\bx\in \K^{k,\Delta}_\mu.
$$
Lemma \ref{lm4.9} and the above force
\begin{equation}\label{e8-lm4.8}
\PP_\bx\Big\{
\U(\omega)=\{\mu\}
\Big\}>1-\eps, \bx\in\K^{k,\Delta}_\mu.
\end{equation}
In view of Lemma \ref{lm4.4} and \eqref{e8-lm4.8}, we have for $\bx\in\K^{k,\Delta}_\mu$ and for each $i=1,\dots,n$ that
\begin{equation}\label{e9-lm4.8}
\PP_\bx\left\{\lim_{t\to\infty}\dfrac1t\int_0^{t}\left(f_i(\BX(s))-\dfrac{\sigma_{ii}g_i^2(\BX(s))}2\right)ds=\lambda_i(\mu)\right\}>1-\eps.
\end{equation}
The claim of this lemma follows from \eqref{e9-lm4.8}, \eqref{e6-lm4.9} and an application of It\^o's formula.
\end{proof}
\begin{thm}\label{thm4.2}
Suppose that Assumptions \ref{a.nonde}, \ref{a.extn2} and \ref{a.extn3} are satisfied and $\M^1\neq \emptyset$.
Then for any $\bx\in\R^{n,\circ}_+$
\begin{equation}\label{e0-thm4.2}
\sum_{\mu\in\M^1} P_\bx^\mu=1
\end{equation}
where for $\bx\in\R^{n,\circ}_+, \mu\in \M^1$
$$P_\bx^\mu:=\PP_\bx\left\{\U(\omega)=\{\mu\}\,\text{ and }\,\lim_{t\to\infty}\dfrac{\ln X_i(t)}t=\lambda_i(\mu)<0, i\in I_\mu^c\right\}>0.$$
\end{thm}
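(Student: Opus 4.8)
\emph{Setup and strategy.} Because $\M^1\neq\emptyset$, Assumption \ref{a.extn} holds, so Theorem \ref{thm4.1} applies and $\BX$ is transient in $\R^{n,\circ}_+$; in particular $\BX$ has no invariant probability measure supported on $\R^{n,\circ}_+$. By Lemma \ref{lm4.6}, with probability one $\U(\omega)$ is a nonempty set of invariant probability measures of $\BX$, and transience forces each of them to be supported on $\partial\R^n_+$, hence to belong to $\Conv(\M)$. Lemma \ref{lm4.8} already handles the final step: once $\BX$ sits in a set $\K^{k,\Delta}_\mu$ attached to an attracting measure $\mu\in\M^1$, then, with probability at least $1-\eps$, $\U(\omega)=\{\mu\}$ and $\frac1t\ln X_i(t)\to\lambda_i(\mu)<0$ for $i\in I_\mu^c$. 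So it suffices to prove that from any $\bx\in\R^{n,\circ}_+$ the path a.s.\ reaches such a set (with $\mu\in\M^1$) and reaches it with positive probability.

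\emph{Every limit measure charges $\M^1$.} First I would show that a.s.\ $\lambda_i(\pi)\le 0$ for every $\pi\in\U(\omega)$ and every $i$. Applying It\^o's formula to $\ln X_i$ and using $\limsup_{t\to\infty}\frac{\ln(1+\bc^\top\BX(t))}{t}\le 0$ (Lemma \ref{lm4.4}; cf.\ \eqref{e7-lm4.9}) together with $\lim_{t\to\infty}\frac1t\int_0^t g_i(\BX(s))\,dE_i(s)=0$ a.s.\ (cf.\ \eqref{e6-lm4.9}) gives $\limsup_{t\to\infty}\frac1t\int_0^t\big(f_i(\BX(s))-\frac{\sigma_{ii}}2 g_i^2(\BX(s))\big)\,ds\le 0$; if moreover $\wtd\Pi_{t_k}\to\pi$ weakly along some $t_k\to\infty$, then, since $|f_i-\frac{\sigma_{ii}}2 g_i^2|\le K\big(1+\sum_j(|f_j|+g_j^2)\big)$, Lemma \ref{lm4.5} identifies the $t_k$-limit of this time average as $\lambda_i(\pi)$, so $\lambda_i(\pi)\le 0$. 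If $\M^2=\emptyset$ then $\M=\M^1$ and this step is done; otherwise Assumption \ref{a.extn3} gives $\max_i\lambda_i(\nu)>0$ for every $\nu\in\Conv(\M^2)$, so no $\pi\in\U(\omega)$ can lie in $\Conv(\M^2)$. In either case, a.s.\ every $\pi\in\U(\omega)$ has an ergodic component in $\M^1$.

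\emph{Reaching an attracting set and concluding.} Fix $\eps>0$. For each $\mu\in\M^1$ choose $k_\mu\in\N$ large and let $\Delta_\mu>0$ be the constant provided by Lemma \ref{lm4.8} for this $k_\mu$ and $\eps$; since $\suppo\mu\subset\R_+^\mu$, one can pick an open set $G_\mu$ with $\overline{G_\mu}\cap\R^{n,\circ}_+\subset\K^{k_\mu,\Delta_\mu}_\mu$ and $\mu(G_\mu)>\frac12$ (the constraints $x_i<\Delta_\mu$, $i\in I_\mu^c$, are $\mu$-a.s.\ trivial however small $\Delta_\mu$ is, so there is no circularity). By the previous step there is, a.s., some $\mu_0\in\M^1$ and $\pi\in\U(\omega)$ carrying weight $p_{\mu_0}>0$ on $\mu_0$; picking $t_k\to\infty$ with $\wtd\Pi_{t_k}\to\pi$ and using the portmanteau theorem, $\liminf_{k}\frac1{t_k}\int_0^{t_k}\1_{\{\BX(s)\in G_{\mu_0}\}}\,ds\ge\pi(G_{\mu_0})\ge p_{\mu_0}\mu_0(G_{\mu_0})>0$, so the path enters $G_{\mu_0}$. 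Hence the stopping time $\sigma:=\inf\{t\ge 0:\BX(t)\in\bigcup_{\mu\in\M^1}G_\mu\}$ is a.s.\ finite, and on $\{\sigma<\infty\}$ we have $\BX(\sigma)\in\K^{k_{\mu^*},\Delta_{\mu^*}}_{\mu^*}$ for some $\F_\sigma$-measurable $\mu^*\in\M^1$. Since $A_\mu:=\{\U(\omega)=\{\mu\}$ and $\lim_t\frac{\ln X_i(t)}{t}=\lambda_i(\mu)<0,\ i\in I_\mu^c\}$ is shift-invariant, the strong Markov property and Lemma \ref{lm4.8} yield $\PP_\bx(A_{\mu^*}\mid\F_\sigma)\ge 1-\eps$; as the $A_\mu$ are pairwise disjoint, $\sum_{\mu\in\M^1}P_\bx^\mu=\PP_\bx\big(\bigcup_{\mu\in\M^1}A_\mu\big)\ge 1-\eps$, and letting $\eps\downarrow0$ proves \eqref{e0-thm4.2}. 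For the strict positivity $P_\bx^\mu>0$: $\K^{k_\mu,\Delta_\mu}_\mu$ has nonempty interior in $\R^{n,\circ}_+$, so by Lemma \ref{lm2.7} there is $t_0>0$ with $\PP_\bx\{\BX(t_0)\in\K^{k_\mu,\Delta_\mu}_\mu\}>0$, and one more use of the strong Markov property together with Lemma \ref{lm4.8} finishes the proof.

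\emph{Main obstacle.} The crux is the second and third steps: converting the soft information that $\U(\omega)$ consists of invariant measures (Lemma \ref{lm4.6}) together with the repelling property of $\M^2$ (Assumption \ref{a.extn3}) into the concrete statement that the trajectory actually visits one of the attracting sets $\K^{k_\mu,\Delta_\mu}_\mu$. The two delicate points are (i) pushing the weak-convergence and ergodic-averaging arguments through on the non-compact state space, where one must rely on the moment estimates in Lemmas \ref{lm4.4}--\ref{lm4.5}, and (ii) choosing the sets $G_\mu$ so that entering $G_\mu$ really means entering $\K^{k_\mu,\Delta_\mu}_\mu$ while keeping $\mu(G_\mu)>\frac12$ independent of the possibly tiny $\Delta_\mu$ coming from Lemma \ref{lm4.8}.
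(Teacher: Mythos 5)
Your proof is correct and follows essentially the same two-step strategy as the paper: first show that a.s.\ every weak$^*$-limit $\pi$ of the occupation measures lies in $\Conv(\M)\setminus\Conv(\M^2)$, hence charges $\M^1$, and then deduce that the path a.s.\ hits an attracting set $\K^{k,\Delta}_\mu$ for some $\mu\in\M^1$, at which point Lemma \ref{lm4.8} and the strong Markov property give the conclusion and the positivity. The differences from the paper --- verifying $\lambda_i(\pi)\le 0$ for every $i$ directly rather than via the paper's weight vector $\bq$, using the portmanteau inequality on open sets $G_\mu$ instead of the paper's continuous bump function $\psi$, and treating $\M^2=\emptyset$ uniformly rather than reducing to $\M^1=\{\bdelta^*\}$ --- are presentational rather than structural.
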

\begin{proof}
First, suppose that Assumption \ref{a.extn3} is satisfied with nonempty $\M^2$.
Then, there is $\bq=(q_1,\dots,q_n)\in\R^{n,\circ}_+$ such that
$\|\bq\|=1$ and
\begin{equation}\label{e1-thm4.2}
\min_{\nu\in\M^2}\left\{\sum_iq_i\lambda_i(\nu)\right\}>0.
\end{equation}
Using \eqref{e1-thm4.2} and arguing by contradiction, similar to the argument from Lemma \ref{lm4.9},
we can show that with probability 1,
$\U(\omega)$ is a subset of $\Conv(\M)\setminus\Conv(\M^2)$.
In other words,  each invariant probability $\pi\in\U(\omega)$ has the form
$\pi=(1-\rho)\pi_1+\rho\pi_2$
where $\rho\in[0,1), \pi_1\in\Conv(\M^1), \pi_2\in\Conv(\M^2)$.
Let
$k_0>1$ and  for each $\mu\in\M^1$ define
$$\K^0_\mu=\{\bx\in\R^\mu_+: x_i\wedge x_i^{-1}\leq k_0, i\in I_\mu\}.$$

By Lemma \ref{lm4.8}, there are $k>k_0$ and $\Delta>0$ such that
\begin{equation}\label{e3-thm4.2}
\PP_\bx\left\{\lim_{t\to\infty}\dfrac{\ln X_i(t)}t=\lambda_i(\mu)<0, i\in I_\mu^c\right\}>1-\eps
\end{equation}
for all $\mu\in\M^1$ and
$\bx\in\K_\mu^{k,\Delta}.$
Let $\psi(\cdot):\R^n_+\to[0,1]$ be a continuous function satisfying
$$
\psi(\bx)=
\begin{cases}
1 \text{ if }\bx\in\bigcup_{\mu\in\M^1}\K_\mu^0\\
0 \text{ if }\bx\in\R^{n,\circ}_+\setminus \left(\bigcup_{\mu\in\M^1}\K_\mu^{k,\Delta}\right).
\end{cases}
$$
Since $\pi_1(\bigcup_{\mu\in\M^1}\K_\mu^0)>0$ for any $\pi_1\in\Conv(\M^1)$
and $\U(\omega)$ is a subset of $\Conv(\M)\setminus\Conv(\M^2)$ with probability 1,
then we have from Lemma \ref{lm4.6} that
\begin{equation}\label{e4-thm4.2}
\PP_\bx\left\{\liminf_{t\to\infty}\dfrac1t\int_0^t\psi(\BX(s))ds>0\right\}=1,\,\,\bx\in\R^{n,\circ}_+.
\end{equation}
Since $\psi(\bx)=0$ if $\bx\in\R^{n,\circ}_+\setminus \left(\bigcup_{\mu\in\M^1}\K_\mu^{k,\Delta}\right)$,
we deduce from \eqref{e4-thm4.2} that
\begin{equation}\label{e5-thm4.2}
\PP_\bx\left\{\liminf_{t\to\infty}\dfrac1t\int_0^t\1_{\left\{\BX(s)\in \bigcup_{\mu\in\M^1}\K_\mu^{k,\Delta}\right\}}ds>0\right\}=1,\,\,\bx\in\R^{n,\circ}_+.
\end{equation}
Thus, if $\BX(0)\in\R^{n,\circ}_+$ then $\{\BX(s)\}$ will enter $\bigcup_{\mu\in\M^1}\K_\mu^{k,\Delta}$ with probability 1.
This fact, combined with \eqref{e3-thm4.2} and the strong Markov property of $\{\BX(s)\}$, implies
that
$$\sum_{\mu\in\M^1} P_\bx^\mu>1-\eps,\,\bx\in\R^{n,\circ}_+$$
where
$$P_\bx^\mu=\PP_\bx\left\{\U(\omega)=\{\mu\}\text{ and }\lim_{t\to\infty}\dfrac{\ln X_i(t)}t=\lambda_i(\mu)<0, i\in I_\mu^c\right\}.$$
Letting $\eps\to0$ we obtain \eqref{e0-thm4.2}.
The positivity of $P_\bx^\mu$ follows from
\eqref{e3-thm4.2} and the fact that $\{\BX(s)\}$ will visit $\K_\mu^{k,\Delta}$ with a positive probability due to the non degeneracy of the diffusion.	

Next, we consider the case when $\M^1\neq \emptyset$ and $\M^2=\emptyset$.
Then, we claim that $\M^1=\{\bdelta^*\}$
where $\bdelta^*$ be the Dirac measure concentrated on the origin $\0$.
Indeed, if $\M^1$ contains a measure $\mu$ with $\R_+^\mu\ne\{\0\}$,
then $\bdelta^*\in\M_\mu$.
Since $\mu$ satisfies Assumption \ref{a.extn}, in view of \eqref{ae3.2} , $\bdelta^*\in\M^2$ which results in a contradicition.
Thus, $\M=\M^1=\{\bdelta^*\}$.
As a result, $\U(\omega)=\{\bdelta^*\}$ with probability 1.
Then, we can easily deduce with probability 1 that
$$\lim_{t\to\infty}\dfrac{\ln X_i(t)}t=\lambda_i(\bdelta^*)=f_i(\0)-\dfrac{\sigma_{ii}g_i^2(\0)}2<0, i=1,\dots,n$$
since $\bdelta^*$ satisfies \eqref{ae3.1}.
\end{proof}

{\bf Acknowledgments.} The authors thank Michel Bena{\"\i}m for helpful discussions and for sending them his manuscript \cite{B14} which was key in proving the persistence results from this paper. We also thank two anonymous referees for their suggestions and comments which helped improve this manuscript.

\appendix
\section{Proofs for Lemmas in Section 3}
\begin{proof}[Proof of Lemma \ref{lm2.0}]
We restrict our proof for the existence and uniqueness of the solution with initial value $\bx\in \R^{n,\circ}_+$.
If $\bx\in \R^{I,\circ}_+$ for any $I\subset \{1,\cdots,n\}$, the proof carries over.
Let $V(\cdot)$ be defined by \eqref{e:V}. Since $\|p\|\leq\delta_0<1$, it is obvious that
\begin{equation}\label{e1-lm2.0}
\lim\limits_{m\to\infty}\inf\{V(\bx): x_i\vee x_i^{-1}>m\,\text{ for some } i=1,\dots, n\}=\infty.
\end{equation}
Note that
\begin{equation}\label{lv_delta}
\begin{aligned}
\Lom V^{\delta_0}(\bx)=\delta_0V^{\delta_0}(\bx)\bigg[&\dfrac{\sum_ic_ix_if_i(\bx)}{1+\bc^\top\bx}+\dfrac{\delta_0-1}2\dfrac{\sum_{i,j} \sigma_{ij}c_ic_jx_ix_jg_i(\bx)g_j(\bx)}{(1+\bc^\top\bx)^2}\\
&-\sum_i\left(p_if_i(\bx)-\dfrac{p_ig_i^2(\bx)\sigma_{ii}}2\right)+\dfrac{\delta_0}2\sum_{i,j} p_ip_j\sigma_{ij}g_i(\bx)g_j(\bx)\\
&-\delta_0\sum_{i,j}\dfrac{c_ip_ix_i\sigma_{ij}g_i(\bx)g_j(\bx)}{(1+\bc^\top\bx)}\bigg].
\end{aligned}
\end{equation}
Since $\|p\|\leq\delta_0$, we have
\begin{equation}\label{e2.4}
\left|\sum_ip_if_i(\bx)\right|\leq\delta_0\sum_i|f_i(\bx)|
\end{equation}
and
\begin{equation}\label{e2.5}
\begin{aligned}
\dfrac{\delta_0}2&\dfrac{\sum_{i,j} \sigma_{ij}c_ic_jx_ix_jg_i(\bx)g_j(\bx)}{(1+\bc^\top\bx)^2}+\sum_i\dfrac{p_ig_i^2(\bx)\sigma_{ii}}2+\dfrac{\delta_0}2\sum_{i,j} p_ip_j\sigma_{ij}g_i(\bx)g_j(\bx)\\
&-\delta_0\sum_{i,j}\dfrac{c_ip_ix_i\sigma_{ij}g_i(\bx)g_j(\bx)}{(1+\bc^\top\bx)}
\leq 3\delta_0\sum_{i,j} \left(|g_i(\bx)g_j(\bx)\sigma_{ij}|\right).
\end{aligned}
\end{equation}
Applying  \eqref{e2.1}, \eqref{e:H}, \eqref{e2.4} and \eqref{e2.5} to \eqref{lv_delta} one gets 
\begin{equation}\label{e2-lm2.0}
\Lom V^{\delta_0}(\bx)\leq \delta_0HV^{\delta_0}(\bx)\,, \,\bx\in\R^{n,\circ}_+.
\end{equation}
Since the coefficients of \eqref{e:system} are locally Lipschitz,
using \eqref{e1-lm2.0} and \eqref{e2-lm2.0},
it follows from \cite[Theorem 3.5]{RK} that
\eqref{e:system} has a unique solution $\BX(t)$ that remains in $\R^{n,\circ}_+$ almost surely for all $t\geq 0$ whenever $\BX(0)=\bx\in\R^{n,\circ}_+$.
The estimate \eqref{e1-lm2.2} can also be derived from \cite[Theorem 3.5]{RK}.
\end{proof}
\begin{proof}[Proof of Lemma \ref{lm2.2}]
Let $V_\bc(\bx):=(1+\bc^\top\bx)^{\delta_0}$.
By noting that
$$\delta_0 \left|\dfrac{\sum_{i,j} \sigma_{ij}c_ic_jx_ix_jg_i(\bx)g_j(\bx)}{(1+\bc^\top\bx)^2}\right|
\leq \delta_0 \sum_{i,j}|g_i(\bx)g_j(\bx)\sigma_{ij}|
$$
a direct calculation combined with \eqref{e2.1} and \eqref{e:H} shows that
\begin{equation}\label{lvc}
\begin{aligned}
\Lom V_\bc(\bx)=&\delta_0V_\bc(\bx)\bigg[\dfrac{\sum_ic_ix_if_i(\bx)}{1+\bc^\top\bx}+\dfrac{\delta_0-1}2\dfrac{\sum_{i,j} \sigma_{ij}c_ic_jx_ix_jg_i(\bx)g_j(\bx)}{(1+\bc^\top\bx)^2}\bigg]\\
\leq&
\delta_0V_\bc(\bx)\bigg[\dfrac{\sum_ic_ix_if_i(\bx)}{1+\bc^\top\bx}-\dfrac{1}2\dfrac{\sum_{i,j} \sigma_{ij}c_ic_jx_ix_jg_i(\bx)g_j(\bx)}{(1+\bc^\top\bx)^2}+\gamma_b+\delta_0\sum_i(|f_i(\bx)|+g^2_i(\bx))\\
&\qquad\qquad+\delta_0\sum_{i,j} |g_i(\bx)g_j(\bx)\sigma_{ij}|\bigg]-\delta_0V_\bc(\bx)\left[\gamma_b+\delta_0\sum(|f_i(\bx)|+g^2_i(\bx))\right]\\
\leq& \delta_0HV_\bc(\bx)\1_{\{\|\bx\|\leq M\}}-\delta_0V_\bc(\bx)(\gamma_b+\delta_0\sum_i(|f_i(\bx)|+|g_i(\bx)|^2))\\
\leq&\delta_0H\sup_{\|\by\|\leq M}\{V_c(\by)\}-\delta_0V_\bc(\bx)(\gamma_b+\delta_0\sum_i(|f_i(\bx)|+|g_i(\bx)|^2))\,\,\forall \bx\in\R^n_+.
\end{aligned}
\end{equation}

Letting $\eta_k=\inf\{t>0: \|\BX(t)\|\geq k\}$, we have by applying Dynkin's formula to the function $\varphi(\bx,t)=e^{\gamma_b\delta_0 t}V_\bc(\bx)$ and the stopping time $\eta_k\wedge t$ and making use of \eqref{lvc} that
\begin{equation}\label{e2.20}
\begin{aligned}
\E_\bx &e^{\delta_0\gamma_b(\eta_k\wedge t)}V_\bc(\BX(\eta_k\wedge t))\\
&=V_\bc(\bx)+\E_{\bx}\int_0^{\eta_k\wedge t} e^{\delta_0\gamma_b s}\left(\delta_0\gamma_bV_\bc(\BX(s))+\Lom V_\bc (\BX(s))\right)ds\\
&\leq V_\bc(\bx)+\Big(\delta_0H\sup_{\|\by\|\leq M}\{V_c(\by)\}\Big)\E_{\bx}\int_0^{\eta_k\wedge t} e^{\delta_0\gamma_b s}ds\\
&\leq V_\bc(\bx)+H_1e^{\delta_0\gamma_bt} \,\text{ with }
\end{aligned}
\end{equation}
where $H_1:=\gamma_b^{-1}H_1\sup_{\|\by\|\leq M}\{V_c(\by)\}$.
Letting $k\to\infty$ in \eqref{e2.20} together with Fatou's lemma forces
$e^{\delta_0\gamma_bt}\E_\bx V_\bc(\BX(t))\leq V_\bc(\bx)+H_1e^{\delta_0\gamma_bt}$,
which in turn implies
$$\E_\bx V_\bc(\BX(t))\leq H_1+V_\bc(\bx)e^{-\delta_0\gamma_bt}$$
as required.
Another application of Dynkin's formula combined with \eqref{lvc} yields
$$\begin{aligned}
\E_\bx &V_\bc(\BX((\eta_k\wedge t)))\\
&=V_\bc(\bx)+\E_{\bx}\int_0^{\eta_k\wedge t} \Lom V_\bc(\BX(s))ds\\
&\leq V_\bc(\bx)+\delta_0\tilde H_1\E\int_0^{\eta_k\wedge t}ds-\delta_0^2\E_{\bx}\int_0^{\eta_k\wedge t} V_\bc(\BX(s))\left[1+\sum_i(|f_i(\BX(s)|+|g_i(\BX(s)|^2)\right]ds.\end{aligned}
$$
As a result
$$\delta_0^2\E_{\bx}\int_0^{\eta_k\wedge t} V_\bc(\BX(s))\left[1+\sum_i(|f_i(\BX(s)|+|g_i(\BX(s)|^2)\right]ds\leq V_\bc(\bx)+\delta_0\tilde H_1t.$$
If we let $k\to\infty$
we obtain \eqref{e3-lm2.2} with $H_2=\delta_0^{-2}\vee \delta_0^{-1}\tilde H_1$.

Finally, since $\lim_{\|\bx\|\to\infty}V_c(\bx)=\infty$,
it follows easily from \eqref{e2.20} that
$$\lim_{k\to\infty}\PP_\bx\{\eta_k<t\}=0\text{ uniformly on each compact subset of } \R^n_+.$$
The above coupled with the assumption that $f_i(\cdot), g_i(\cdot), i=1,\dots,n$ are locally Lipschitz allow us to modify the proof
of \cite[Theorem 2.9.3]{MAO} by a truncation argument in order to
get the Markov-Feller property of $(\BX(t))$.
\end{proof}
\begin{proof}[Proof of Lemma \ref{lm2.3}]
It suffices to suppose that $\mu$ is ergodic.

Let $\phi(\bx)=[(1+\bc^\top\bx)^{\delta_0}\left(1+\sum_i(|f_i(\bx)|+|g_i(\bx)|^2)\right)$.
Since $\mu$ is invariant, we have
\begin{equation}\label{eq1-lm2.3}
\int_{\R^n_+}(k\wedge\phi(\bx))\mu(d\bx)=\lim\limits_{t\to\infty}\int_{\R^n_+} \E_\bx\big[k\wedge\phi(\BX(t)\big]\mu(d\bx)
\end{equation}
In view of Lemma \ref{lm2.2},
\begin{equation}\label{eq2-lm2.3}
\limsup_{t\to\infty}\int_{\R^n_+} \E_\bx\big[k\wedge\phi(\BX(t)\big]\mu(d\bx)\leq H_2,\,\bx\in\R^n_+
\end{equation}
As a consequence of Fatou's Lemma, it follows from \eqref{eq1-lm2.3} and
\eqref{eq2-lm2.3} that $$\int_{\R^n_+}(k\wedge\phi(\bx))\mu(d\bx)\leq H_2\,\text{ for any }\,k\in\N.$$
Letting $k\to\infty$ and making use of Fatou's Lemma again, we get
$$\int_{\R^n_+}\phi(\bx)\mu(d\bx)\leq H_2.$$
By the strong law of large numbers (see e.g. \cite[Theorem 4.2]{RK})  and the $\mu$-integrability of $\sum_i(|f_i(\bx)|+|g_i^2(\bx)|)$ (due to the inequality above) one gets
\begin{equation}\label{e2.7}
\begin{aligned}
\lim\limits_{t\to\infty}&\dfrac1t\int_0^t \left[\dfrac{c_iX_i(s)f_i(\BX(s))}{1+\sum_ic_iX_i(s)}-\dfrac12\dfrac{\sum_{i,j} c_ic_jX_i(s)X_j(s)g_i(\BX(s))g_j(\BX(s))}{(1+\sum_ic_iX_i(s))^2}\right]ds\\
&=\int_{\R^n_+}\left[\dfrac{c_ix_if_i(\bx)}{1+\bc^\top\bx}-\dfrac12\dfrac{\sum_{i,j} \sigma_{ij}c_ic_jx_ix_jg_i(\bx)g_j(\bx)}{(1+\bc^\top\bx)^2}\right]\mu(d\bx)<\infty\,~~~\PP_{\mu}-\text{a.s.}
\end{aligned}
\end{equation}
and
$$
\begin{aligned}
\lim\limits_{t\to\infty}&\dfrac1t\int_0^t \dfrac{\sum_{i,j} c_ic_jX_i(s)X_j(s)g_i(\BX(s))g_j(\BX(s))}{(1+\sum_ic_iX_i(s))^2}ds\\
&=\int_{\R^n_+}\dfrac{\sum_{i,j} \sigma_{ij}c_ic_jx_ix_jg_i(\bx)g_j(\bx)}{(1+\bc^\top\bx)^2}\mu(d\bx)<\infty\,~~~~\PP_{\mu}-\text{a.s.}
\end{aligned}
$$
The above limit tells us that if we let
\[
Q_t:=\langle L_\cdot,L_\cdot \rangle_t
\]
be the quadratic variation of the local martingale
\[
L_t:= \int_0^t\dfrac{\sum_ic_iX_i(s)g_i(\BX(s))dE_i(s)}{1+\sum_ic_iX_i(s)}
\]
then
\[
\limsup_{t\to\infty}\frac{Q_t}{t} = \int_{\R^n_+}\dfrac{\sum_{i,j} \sigma_{ij}c_ic_jx_ix_jg_i(\bx)g_j(\bx)}{(1+\bc^\top\bx)^2}\mu(d\bx)<\infty\,~~~~\PP_{\mu}-\text{a.s.}
\]
Applying the strong law of large numbers for local martingales  (see \cite[Theorem 1.3.4]{MAO}) one can see that
\begin{equation}\label{e2.8}
\lim\limits_{t\to\infty}\dfrac1t\int_0^t \dfrac{\sum_ic_iX_i(s)g_i(\BX(s))dE_i(s)}{1+\sum_ic_iX_i(s)}=0\,\,\,\,\PP_{\mu}-\text{a.s.}
\end{equation}
In view of \eqref{e2.7}, \eqref{e2.8} and It\^o's formula,
\begin{equation}\label{e2.88}
\lim\limits_{t\to\infty}\dfrac{\ln(1+\bc^\top\BX(t))}t=\int_{\R^n_+}\left[\dfrac{\sum_ic_ix_if_i(\bx)}{1+\bc^\top\bx}-\dfrac12\dfrac{\sum_{i,j} \sigma_{ij}c_ic_jx_ix_jg_i(\bx)g_j(\bx)}{(1+\bc^\top\bx)^2}\right]\mu(d\bx) \,\,\,\,\PP_{\mu} -\text{a.s.}
\end{equation}
A simple contradiction argument coupled with \eqref{e2.88} forces
$$\int_{\R^n_+}\left[\dfrac{\sum_ic_ix_if_i(\bx)}{1+\bc^\top\bx}-\dfrac12\dfrac{\sum_{i,j} \sigma_{ij}c_ic_jx_ix_jg_i(\bx)g_j(\bx)}{(1+\bc^\top\bx)^2}\right]\mu(d\bx)=0.$$
\end{proof}

\begin{proof}[Proof of Lemma \ref{lm2.4}]
Let $V_{M}=\sup\{(1+\bc^\top\bx)^{\delta_0}: \|\bx\|\leq M\}$
and fix $\eps>0.$ Pick
$l_\eps\in \N$ such that
$$\dfrac{\eps (1+\bc^\top\bx)^{\delta_0-\delta}}{K_hH_{2}(1+V_M)}>1\,\text{ for any }
\|\bx\|\geq l_\eps.$$
Let $\phi_l(\cdot):\R^n_+\to[0,1]$ be a continuous function with compact support satisfying  $\phi_l(\bx)=1$ if $\|\bx\|\leq l_\eps$.
One gets the following sequence of inequalities
\begin{equation}\label{e2.10}
\begin{aligned}
\int_{\R^n_+}&\left(1-\phi_l(\bx)\right)|h(\bx)|\Pi^{\bx_k}_{T_k}(d\bx)\\
\leq&K_h\int_{\R^n_+}\left(1-\phi_k(\bx)\right)(1+\bc^\top\bx)^{\delta}(1+\sum_i(|f_i(\bx)|+|g_i(\bx)|^2))\Pi^{\bx_k}_{T_k}(d\bx)\\
\leq&\dfrac{K_h\eps}{K_hH_2(1+V_M)}\int_{\R^n_+}\left(1-\phi_l(\bx)\right)(1+\bc^\top\bx)^{\delta_0}(1+\sum_i(|f_i(\bx)|+|g_i(\bx)|^2))\Pi^{\bx_k}_{T_k}(d\bx)\\
\leq&\eps.
\end{aligned}
\end{equation}
where the last inequality follows by \eqref{e2-lm2.2}.
Similar to \eqref{e2.10} we have from Lemma \ref{lm2.3} that
\begin{equation}\label{e2.11}
\int_{\R^n_+}\left(1-\phi_l(\bx)\right)|h(\bx)|\pi(d\bx)\leq\eps.
\end{equation}
Since $\Pi^{\bx_k}_{T_k}$ converges weakly to $\pi$ we get
\begin{equation}\label{e2.12}
\lim\limits_{k\to\infty}\int_{\R^n_+}\phi_l(\bx)h(\bx)\Pi^{\bx_k}_{T_k}(d\bx)=\int_{\R^n_+}\phi_l(\bx)h(\bx)\pi(d\bx).
\end{equation}
As a consequence of \eqref{e2.10}, \eqref{e2.11} and \eqref{e2.12}
\begin{equation}
\limsup\limits_{k\to\infty}\left|\int_{\R^n_+}h(\bx)\Pi^{\bx_k}_{T_k}(d\bx)-\int_{\R^n_+}h(\bx)\pi(d\bx)\right|\leq2\eps.
\end{equation}
The desired result follows by letting $\eps\to0$.

\end{proof}

\begin{proof}[Proof of Lemma \ref{lm2.5}]
It is easy to show that there exists some $K_2>0$ such that
$$ |y|^k\exp(\theta y)\leq K_2(\exp(\theta_0y)+\exp(-\theta_0y)), k=1,2.$$
for $\theta\in \left[0,\frac{\theta_0}2\right]$, $y\in\R$.
For any $y\in\R$, let $\xi(y)$ be a number lying between $y$ and $0$ such that
$\exp(\xi(y))=\dfrac{e^y-1}y$.
Pick $\theta\in\left[0,\frac{\theta_0}2\right]$
and let $h\in\R$ such that $0\leq \theta+h\leq  \frac{\theta_0}2$. Then
$$\lim\limits_{h\to0}\dfrac{\exp((\theta+h) Y)-\exp(\theta Y)}h= Y\exp(\theta Y)\text{ a.s.}$$
and
$$\left|\dfrac{\exp((\theta+h) Y)-\exp(\theta Y)}h\right|=|Y|\exp(\theta Y+\xi(hY))
\leq 2K_3[ \exp(\theta_0 Y)+\exp(-\theta_0 Y)].$$
By the Lebesgue dominated convergence theorem,
$$\dfrac{d \E \exp(\theta Y)}{d\theta}=\lim\limits_{h\to0}\E\dfrac{\exp((\theta+h) Y)-\exp(\theta Y)}h= \E Y\exp(\theta Y).$$
Similarly,
$$\dfrac{d^2 \E \exp(\theta Y)}{d\theta^2}=\E Y^2\exp(\theta Y).$$
As a result, we obtain
$$\dfrac{d\phi}{d\theta}=\dfrac{\E Y\exp(\theta Y)}{\E \exp(\theta Y)}$$
which implies
	$$\dfrac{d\phi}{d\theta}(0)=\E Y$$
and
$$\dfrac{d^2\phi}{d\theta^2}=\dfrac{\E Y^2\exp(\theta Y)\E \exp(\theta Y)-[\E Y\exp(\theta Y)]^2}{[\E \exp(\theta Y)]^2}.$$
By H{\"o}lder's inequality we have $\E Y^2\exp(\theta Y)\E \exp(\theta Y)\geq[\E Y\exp(\theta Y)]^2$
and therefore $$\dfrac{d^2\phi}{d\theta^2}\geq0\,,\forall\,\theta\in \left[0,\frac{\theta_0}2\right].$$
Moreover,
$$
\begin{aligned}
\dfrac{d^2\phi}{d\theta^2}\leq& \dfrac{\E Y^2\exp(\theta Y)}{\E \exp(\theta Y)}\\
\leq & \dfrac{K_3(\E \exp(\theta_0 Y)+\E \exp(-\theta_0 Y))}{\exp(\theta \E Y)}\\
\leq & \dfrac{K_3(\E \exp(\theta_0 Y)+\E \exp(-\theta_0 Y))}{\exp(-\theta_0|\E Y|)}<K_2
\end{aligned}
$$
for 	some $K_2$ depending only on $K_1$ and $K_3$.
\end{proof}
\begin{proof}[Proof of Lemma \ref{lm2.7}]
Let $K\subset \R_+^{n,\circ}$ be a compact set and let $D$ be an open, relatively compact subset of $\R^{n,\circ}_+$ with smooth boundary such that
 $K\subset D$.
For $\bx\in D$ and $t>0$,
define the measure
$$P_D(t, \bx,\cdot)=\PP_{\bx}\Big(\{\BX(t)\in\cdot\}\cap\{\BX(s)\in D, s\in[0,t]\}\Big).$$
For a bounded continuous function $f:\R^n\mapsto\R$
vanishing outside $D$,
let $u_f(t, x)$ be the solution to
\begin{equation}
\begin{cases}
\dfrac{\partial u}{\partial t}+\Lom u=0 \text{ in } D\times[0,T)\\
u\left(T,x\right)=f(x)\text{ on } D,\\
u(t, x)=0\text{ on } \partial D\times\left[0,T\right].
\end{cases}
\end{equation}
By the Feyman-Kac theorem (see e.g., \cite[Theorem 2.8.2]{MAO}),
$$u_f(t, x)=\int_{D}f(\by)P_D(T-t, \bx,d\by).$$
Under the assumption of nondegeneracy (part (1) of Assumption \ref{a.nonde}),
we deduce from \cite[Theorem 3.16 and its corollary]{AF08} that
$P_D(t,\bx,\cdot)$
has a  density $p_D(t,\bx,\by)$
that is strictly positive and continuous in $(\bx,\by)\in D\times D$.
Since $K$ is compact,
$p_{K, D}(t,\by):=\inf_{\bx\in K}p_D(t,\bx,\by)$
is strictly positive and continuous in $\by\in D.$

For $\by\notin D$, we define $p_{K, D}(t,\by)=0$.
Let $m_{K,D}(\cdot)$ be the measure
whose density is $p_{K, D}(T,\by)$.
For any $\bx\in K$  and a measurable $B\subset\R^{n,\circ}_+$, we have
$$P(t,\bx, B)\geq P_D(T,\bx,B)\geq m_{K,D}(B).$$
Thus, $K$ is a petite set for the Markov chain $\{\BX(kT), k\in\N\}$.
On the other hand,
since  $p_D(t,\bx,\by)$ is strictly positive for any $D$,
we note that
\begin{equation}\label{positive-den}
P(T, \bx, B)>0\,\text{ for any set }\, B\,\text{ whose Lebesgue measure is nonzero}.
\end{equation}
Thus, $\{\BX(kT), k\in\N\}$ is irreducible.
Morever,  it is easy to derive from \eqref{positive-den} that
there are no disjoint subsets of $\R^{n}_+\setminus\{\mathbf{0}\}$, denoted by $A_0, \dots, A_{d-1}$ with some $d>1$
such that for any $\bx\in A_i$,
$$P(T, \bx, A_j)=1 \text{ where } j=i+1 \text{ (mod } d).$$
As a result, the Markov chain $\{\BX(kT), k\in\N\}$ is aperiodic.
\end{proof}

\section{Proofs for Lemmas in Section 4}
\begin{proof}[Proof of Lemma \ref{lm4.4}]
Computations similar to those used to prove \eqref{lvc}, give us
\begin{equation}\label{e1-lm4.4}
\begin{aligned}
\Lom (1+\bc^\top\bx)^{\delta_1}\leq&\delta_1\tilde H_2-\delta_1(1+\bc^\top\bx)^{\delta_1}(\gamma_b+\delta_1\sum_i(|f_i(\bx)|+|g_i(\bx)|^2)),\, \bx\in\R^n_+
\end{aligned}
\end{equation}
for some $\tilde H_2>0$.
Equation \eqref{e1-lm4.4} together with It\^o's formula implies
\begin{equation*}
\begin{aligned}
\dfrac{(1+\bc^\top\BX(t))^{\delta_1}}t=&\dfrac{(1+\bc^\top\BX(0))^{\delta_1}}t
+\dfrac1t\int_0^t \Lom (1+\bc^\top\BX(s))^{\delta_1}ds\\
&+\dfrac1t\int_0^t \dfrac{\sum_ic_iX_i(s)g_i(\BX(s))dE_i(s)}{(1+\bc^\top\BX(s))^{1-\delta_1}}ds\\
\leq&\dfrac{(1+\bc^\top\BX(0))^{\delta_1}}t+\delta_1\tilde H_2+\dfrac1t\int_0^t \dfrac{\sum_ic_iX_i(s)g_i(\BX(s))dE_i(s)}{(1+\bc^\top\BX(s))^{1-\delta_1}}\\
&-\delta_1\dfrac1t\int_0^t(1+\bc^\top\BX(s))^{\delta_1}(\gamma_b+\delta_1\sum_i(|f_i(\BX(s))|+|g_i(\BX(s))|^2))ds.
\end{aligned}
\end{equation*}
Since $\dfrac{(1+\bc^\top\BX(t))^{\delta_1}}t\geq 0$ the above yields
\begin{equation}\label{e2-lm4.4}
\begin{aligned}
\dfrac{\delta_1}{2t}\int_0^t&(1+\bc^\top\BX(s))^{\delta_1}(\gamma_b+\delta_1\sum_i(|f_i(\BX(s))|+|g_i(\BX(s))|^2))\\
\leq&\dfrac{(1+\bc^\top\BX(0))^{\delta_1}}t+\delta_1\tilde H_2+\dfrac1t\int_0^t \dfrac{\sum_ic_iX_i(s)g_i(\BX(s))dE_i(s)}{(1+\bc^\top\BX(s))^{1-\delta_1}}\\
&-\dfrac{\delta_1}{2t}\int_0^t(1+\bc^\top\BX(s))^{\delta_1}(\gamma_b+\delta_1\sum_i(|f_i(\BX(s))|+|g_i(\BX(s))|^2))ds.
\end{aligned}
\end{equation}
For each $i=1,\dots,n$, the quadratic variation of
$$\int_0^t \frac{c_iX_i(s)g_i(\BX(s))dE_i(s)}{(1+\bc^\top\BX(s))^{1-\delta_1}}$$
is
$$Q_t:=\int_0^t \frac{[c_iX_i(s)g_i(\BX(s))]^2\sigma_{ii}}{(1+\bc^\top\BX(s))^{2-2\delta_1}}ds.$$
We have the following estimate for each $i=1,\dots,n$
\begin{align*}
\frac{[c_ix_i(s)g_i(\bx)]^2\sigma_{ii}}{(1+\bc^\top\bx)^{2-2\delta_1}}\leq&
(1+\bc^\top\bx)^{2\delta_1}g_i^2(\bx)\sigma_{ii} \\
\leq&K_i\left[(1+\bc^\top\bx)^{\delta_1}(\gamma_b+\delta_1\sum_i(|f_i(\bx)|+|g_i(\bx)|^2))\right] \\
\end{align*}
where due to Assumption \ref{a.extn2}
$$K_i=\sup_{\bx\in\R^n_+}\left\{\dfrac{(1+\bc^\top\bx)^{\delta_1}g_i^2(\bx)\sigma_{ii}}{\gamma_b+\delta_1\sum_i(|f_i(\bx)|+|g_i(\bx)|^2)}\right\}<\infty.$$
Thus,
\begin{equation}\label{e5-lm4.4}
\int_0^\infty \frac{dQ_t}{(1+A_t)^2}\leq \int_0^\infty K_i\frac{dA_t}{(1+A_t)^2}= K_i <\infty \text{ a.s.}
\end{equation}
where $$A_t:=  \int_0^u(1+\bc^\top\BX(s))^{\delta_1}(\gamma_b+\delta_1\sum_i(|f_i(\BX(s))|+|g_i(\BX(s))|^2))ds.$$

On the other hand
\begin{equation}\label{e6-lm4.4}
\lim_{t\to\infty} A_t\geq \lim_{t\to\infty}\gamma_bt=\infty\text{ a.s.}
\end{equation}

By \eqref{e5-lm4.4}, \eqref{e6-lm4.4} we can use the strong law of large numbers for local martingales (see \cite[Theorem 1.3.4]{MAO} in order to obtain for each $i=1,\dots,n$ that
\begin{equation}\label{e3-lm4.4}
\lim_{t\to\infty}\dfrac{\int_0^t c_iX_i(s)g_i(\BX(s)(1+\bc^\top\BX(s))^{\delta_1-1}dE_i(s)}
{\int_0^t(1+\bc^\top\BX(s))^{\delta_1}(\gamma_b+\delta_1\sum_i(|f_i(\BX(s))|+|g_i(\BX(s))|^2))ds}=0\,\,\,\PP_\bx-\text{ a.s.}.
\end{equation}
This implies
\begin{equation}\label{e4-lm4.4}
\begin{aligned}
\limsup_{t\to\infty}\bigg[&\dfrac1t\int_0^t \dfrac{\sum_ic_iX_i(s)g_i(\BX(s))dE_i(s)}{(1+\bc^\top\BX(s))^{1-\delta_1}}\\
&-\dfrac{\delta_1}{2t}\int_0^t(1+\bc^\top\BX(s))^{\delta_1}(\gamma_b+\delta_1\sum_i(|f_i(\BX(s))|+|g_i(\BX(s))|^2))ds\bigg]\leq0\,\,\,\PP_\bx-\text{ a.s.}
\end{aligned}
\end{equation}
Applying \eqref{e4-lm4.4} to \eqref{e2-lm4.4} we get
\begin{equation}\label{e8-lm4.4}
\limsup_{t\to\infty}\dfrac{\delta_1}{2t}\int_0^t(1+\bc^\top\BX(s))^{\delta_1}(\gamma_b+\delta_1\sum_i(|f_i(\BX(s))|+|g_i(\BX(s))|^2))ds
\leq\delta_1\tilde H_2 \,\,\,\PP_\bx-\text{ a.s.}
\end{equation}
Similar to the proof of \eqref{e2.8}, we can obtain \eqref{e8-lm4.9} from \eqref{e8-lm4.4} and the strong law of large numbers for local martingales. The proof is complete.
\end{proof}
\begin{proof}[Proof of Lemma \ref{lm4.7}]
Let $\hat K_1$ be sufficiently large such that
$(1+\bc^\top\bx)^{\delta_1}>2\hat K$ if $\|\bx\|\geq\hat K_1.$
By Lemma \ref{lm4.4},
$$\limsup_{t\to\infty}\dfrac1t\int_0^t\1_{\{\|\BX(s)\|>\hat K_1\}}ds\leq \dfrac1{2\hat K}\limsup_{t\to\infty}\dfrac1t\int_0^t(1+\bc^\top\BX(s))^{\delta_1}ds\leq \dfrac1{2\hat K}\hat K=\dfrac12\,\,\,\,\PP_\bx\text{-a.s.}, \bx\in\R^n_+.$$
which implies \eqref{e1-lm4.7}.

Next, we prove \eqref{e2-lm4.7}.
Fix $i\in\{1,\dots,n\}$ and define $\tilde V_i=\dfrac{1+\bc^\top\bx}{x_i^{\delta_0}}$
on $\{\bx\in\R^n_+: x_i>0\}$.
Similar to \eqref{e2-lm2.0}, it can be shown that
$$\Lom \tilde V_i^{\delta_0}(\bx)\leq \delta_0H\tilde V_i^{\delta_0}(\bx)\,, \bx\in\R^n_+, x_i>0.$$
Let $\zeta_k:=\inf\{t>0: X_i(t)^{-1}\vee\|\BX(t)\|>k\}$.
We have by Dynkin's formula that
$$
\begin{aligned}
\E_\bx  \tilde V_i(\BX((n_eT_e)\wedge\eta_k))\leq&
\tilde V_i(\bx)+\delta_0 H\E_\bx\int_0^{(n_eT_e)\wedge\eta_k} \Lom \tilde V_i(\BX(s))ds\\
 \leq&\tilde V_i(\bx)+\delta_0 H\int_0^{n_eT_e} \E_\bx \tilde V_i(\BX(s\wedge\eta_k))ds.
\end{aligned}
$$
Using Gronwall's inequality yields 
\begin{equation}\label{e3-lm4.7}
\E_\bx  \tilde V_i(\BX((n_eT_e)\wedge\eta_k))\leq  \tilde V_i(\bx)\exp(\delta_0 Hn_eT_e)\,, \bx\in\R^n_+, x_i>0.
\end{equation}
Let $k_1\in\N$ sufficiently large such that 
\begin{equation}\label{e4-lm4.7}
\tilde V_i(\by)> \dfrac1{\eps_1}\sup_{\|\bx\|\leq\hat K_1, x_i\geq\eps_2}\left\{\tilde V_i(\bx)\right\}\exp(\delta_0 Hn_eT_e) \text{ for all } \by\in\R^n_+, y_i^{-1}\vee\|\by\|>k_1.
\end{equation}
It follows from \eqref{e3-lm4.7} and \eqref{e4-lm4.7} that 
$$\PP_\bx\{\eta_{k_1}<n_eT_e\}\leq \dfrac{\E_\bx  \tilde V_i(\BX((n_eT_e)\wedge\eta_{k_1}))}{\inf\{\tilde V_i(\by):\by\in\R^n_+, y_i^{-1}\vee\|\by\|>k_1\}}\leq\eps_1,\text{ for }\,\bx\in\R^n_+,\|\bx\|\leq\hat K_1, x_i\geq\eps_2.$$
Now inequality \eqref{e2-lm4.7} follows by straightforward computations.
\end{proof}
\bibliographystyle{amsalpha}
\bibliography{Kolmogorov}

\newcommand{\etalchar}[1]{$^{#1}$}
\providecommand{\bysame}{\leavevmode\hbox to3em{\hrulefill}\thinspace}
\providecommand{\MR}{\relax\ifhmode\unskip\space\fi MR }
\providecommand{\MRhref}[2]{%
  \href{http://www.ams.org/mathscinet-getitem?mr=#1}{#2}
}
\providecommand{\href}[2]{#2}
\begin{thebibliography}{CLSJG05}

\bibitem[BEM07]{BEM07}
J.~Blath, A.~Etheridge, and M.~Meredith, \emph{Coexistence in locally regulated
  competing populations and survival of branching annihilating random walk},
  Ann. Appl. Probab. \textbf{17} (2007), no.~5-6, 1474--1507. \MR{2358631}

\bibitem[Ben14]{B14}
M.~Bena{\"\i}m, \emph{Stochastic persistence}, preprint.

\bibitem[BHS08]{BHS08}
M.~Bena{\"\i}m, J.~Hofbauer, and W.~H. Sandholm, \emph{Robust permanence and
  impermanence for stochastic replicator dynamics}, Journal of Biological
  Dynamics \textbf{2} (2008), no.~2, 180--195.

\bibitem[BL16]{BL16}
M.~Bena{\"\i}m and C.~Lobry, \emph{{L}otka--{V}olterra with randomly
  fluctuating environments or “how switching between beneficial environments
  can make survival harder”}, Ann. Appl. Probab. (2016), no.~6, 3754--3785.
  \MR{3582817}

\bibitem[Bra02]{B02}
C.~A. Braumann, \emph{Variable effort harvesting models in random environments:
  generalization to density-dependent noise intensities}, Mathematical
  biosciences \textbf{177} (2002), 229--245.

\bibitem[BS09]{BS09}
M.~Bena{\"\i}m and S.~J. Schreiber, \emph{Persistence of structured populations
  in random environments}, Theoretical Population Biology \textbf{76} (2009),
  no.~1, 19--34.

\bibitem[Cas01]{cas01}
H.~Caswell, \emph{Matrix population models}, Wiley Online Library, 2001.

\bibitem[CCL{\etalchar{+}}09]{CCA09}
P.~Cattiaux, P.~Collet, A.~Lambert, S.~Mart{\'\i}nez, S.~M{\'e}l{\'e}ard, and
  J.~San Mart{\'\i}n, \emph{Quasi-stationary distributions and diffusion models
  in population dynamics}, Ann. Probab. \textbf{37} (2009), no.~5, 1926--1969.
  \MR{2561437}

\bibitem[CE89]{CE89}
P.~L. Chesson and S.~Ellner, \emph{Invasibility and stochastic boundedness in
  monotonic competition models}, J. Math. Biol. \textbf{27} (1989), no.~2,
  117--138. \MR{991046}

\bibitem[Che00]{C00}
P.~Chesson, \emph{General theory of competitive coexistence in
  spatially-varying environments}, Theoretical Population Biology \textbf{58}
  (2000), no.~3, 211--237.

\bibitem[CK05]{CK05}
Z.~Chen and R.~Kulperger, \emph{A stochastic competing-species model and
  ergodicity}, J. Appl. Probab. \textbf{42} (2005), no.~3, 738--753.
  \MR{2157517}

\bibitem[CLSJG05]{cross05}
P.~C. Cross, J.~O. Lloyd-Smith, P.~L.~F. Johnson, and W.~M. Getz,
  \emph{Duelling timescales of host movement and disease recovery determine
  invasion of disease in structured populations}, Ecology Letters \textbf{8}
  (2005), no.~6, 587--595.

\bibitem[CM10]{CM10}
P.~Cattiaux and S.~M{\'e}l{\'e}ard, \emph{Competitive or weak cooperative
  stochastic {L}otka--{V}olterra systems conditioned on non-extinction}, J.
  Math. Biol. \textbf{60} (2010), no.~6, 797--829. \MR{2606515}

\bibitem[DCH{\etalchar{+}}05]{davies05}
K.~F. Davies, P.~Chesson, S.~Harrison, B.~D. Inouye, B.~Melbourne, and K.~J.
  Rice, \emph{Spatial heterogeneity explains the scale dependence of the
  native-exotic diversity relationship}, Ecology \textbf{86} (2005), no.~6,
  1602--1610.

\bibitem[EHS15]{EHS15}
S.~N. Evans, A.~Hening, and S.~J. Schreiber, \emph{Protected polymorphisms and
  evolutionary stability of patch-selection strategies in stochastic
  environments}, J. Math. Biol. \textbf{71} (2015), no.~2, 325--359.
  \MR{3367678}

\bibitem[EK09]{EK09}
S.~N. Ethier and T.~G. Kurtz, \emph{Markov processes: characterization and
  convergence}, vol. 282, John Wiley \& Sons, 2009.

\bibitem[ERSS13]{ERSS13}
S.~N. Evans, P.~L. Ralph, S.~J. Schreiber, and A.~Sen, \emph{Stochastic
  population growth in spatially heterogeneous environments}, J. Math. Biol.
  \textbf{66} (2013), no.~3, 423--476. \MR{3010201}

\bibitem[Fri08]{AF08}
Avner Friedman, \emph{Partial differential equations of parabolic type},
  Courier Dover Publications, 2008.

\bibitem[Gar88]{G88}
T.~C. Gard, \emph{Introduction to stochastic differential equations}, M.
  Dekker, 1988.

\bibitem[HN17a]{HN17b}
A.~Hening and D.~Nguyen, \emph{Persistence in stochastic {L}otka--{V}olterra
  food chains with intraspecific competition}, preprint (2017).

\bibitem[HN17b]{HN17}
\bysame, \emph{Stochastic {L}otka--{V}olterra food chains}, preprint (2017),
  \url{https://arxiv.org/abs/1703.04809}.

\bibitem[HNY16]{ANY}
A.~Hening, D.~Nguyen, and G.~Yin, \emph{Stochastic population growth in
  spatially heterogeneous environments: The density-dependent case}, submitted
  (2016), \url{http://arxiv.org/abs/1605.02027}.

\bibitem[Hof81]{H81}
J.~Hofbauer, \emph{A general cooperation theorem for hypercycles}, Monatsh.
  Math. \textbf{91} (1981), no.~3, 233--240. \MR{619966}

\bibitem[HS89]{HJ89}
J.~Hofbauer and J.~W-H So, \emph{Uniform persistence and repellors for maps},
  Proc. Amer. Math. Soc. \textbf{107} (1989), no.~4, 1137--1142. \MR{984816}

\bibitem[Hut84]{H84}
V.~Hutson, \emph{A theorem on average {L}iapunov functions}, Monatsh. Math.
  \textbf{98} (1984), no.~4, 267--275. \MR{776353}

\bibitem[Kha12]{RK}
R.~Khasminskii, \emph{Stochastic stability of differential equations}, second
  ed., Stochastic Modelling and Applied Probability, vol.~66, Springer,
  Heidelberg, 2012, With contributions by G. N. Milstein and M. B. Nevelson.
  \MR{2894052}

\bibitem[LB16]{LB16}
M.~Liu and C.~Bai, \emph{Analysis of a stochastic tri-trophic food-chain model
  with harvesting}, J. Math. Biol. \textbf{73} (2016), no.~3, 597--625.
  \MR{3535415}

\bibitem[LES03]{LES03}
R.~Lande, S.~Engen, and B.-E. Saether, \emph{Stochastic population dynamics in
  ecology and conservation}, Oxford University Press on Demand, 2003.

\bibitem[LM96]{law96}
R.~Law and R.~D. Morton, \emph{Permanence and the assembly of ecological
  communities}, Ecology (1996), 762--775.

\bibitem[Mao97]{MAO}
X.~Mao, \emph{Stochastic differential equations and their applications},
  Horwood Publishing Series in Mathematics \& Applications, Horwood Publishing
  Limited, Chichester, 1997. \MR{1475218}

\bibitem[MT92]{MT}
S.~P. Meyn and R.~L. Tweedie, \emph{Stability of {M}arkovian processes. {I}.
  criteria for discrete-time chains}, Adv. in Appl. Probab. \textbf{24} (1992),
  no.~3, 542--574. \MR{MR1174380}

\bibitem[Num84]{EN}
E.~Nummelin, \emph{General irreducible {M}arkov chains and nonnegative
  operators}, Cambridge Tracts in Mathematics, vol.~83, Cambridge University
  Press, Cambridge, 1984. \MR{776608}

\bibitem[PH05]{pyvsek05}
P.~Py{\v{s}}ek and P.~E. Hulme, \emph{Spatio-temporal dynamics of plant
  invasions: linking pattern to process}, Ecoscience \textbf{12} (2005), no.~3,
  302--315.

\bibitem[RP07]{RP07}
R.~Rudnicki and K.~Pich{\'o}r, \emph{Influence of stochastic perturbation on
  prey-predator systems}, Math. Biosci. \textbf{206} (2007), no.~1, 108--119.
  \MR{2311674}

\bibitem[Rud03]{R03}
R.~Rudnicki, \emph{Long-time behaviour of a stochastic prey-predator model},
  Stochastic Process. Appl. \textbf{108} (2003), no.~1, 93--107. \MR{2008602}

\bibitem[SBA11]{SBA11}
S.~J. Schreiber, M.~Bena{\"\i}m, and K.~A.~S. Atchad{\'e}, \emph{Persistence in
  fluctuating environments}, J. Math. Biol. \textbf{62} (2011), no.~5,
  655--683. \MR{2786721}

\bibitem[SLS09]{SLS09}
S.~J. Schreiber and J.~O. Lloyd-Smith, \emph{Invasion dynamics in spatially
  heterogeneous environments}, The American Naturalist \textbf{174} (2009),
  no.~4, 490--505.

\bibitem[Tur77]{T77}
M.~Turelli, \emph{Random environments and stochastic calculus}, Theoretical
  Population Biology \textbf{12} (1977), no.~2, 140--178.

\end{thebibliography}
\end{document}